\theoremstyle{definition}
\theoremstyle{lemma}
\newtheorem{lemma}{Lemma}
\newtheorem*{remark}{Remark}
\theoremstyle{theorem}
\newtheorem{theorem}{Theorem}
\theoremstyle{assumption}
\newtheorem{assumption}{Assumption}
\DeclareMathOperator*{\argmin}{\arg\,\min}
\renewcommand{\hat}{\widehat}
\renewcommand{\tilde}{\widetilde}
\newcommand*\diff[1]{\mathop{}\!{\mathrm{d}#1}}
\newcommand{\diag}[1]{{\rm diag}\LRp{#1}}
\newcommand{\td}[2]{\frac{{\rm d}#1}{{\rm d}{\rm #2}}}
\newcommand{\pd}[2]{\frac{\partial#1}{\partial#2}}
\newcommand{\nor}[1]{\left\| #1 \right\|}
\newcommand{\LRp}[1]{\left( #1 \right)}
\newcommand{\LRs}[1]{\left[ #1 \right]}
\newcommand{\LRb}[1]{\left| #1 \right|}
\newcommand{\LRc}[1]{\left\{ #1 \right\}}
\newcommand{\LRl}[1]{\left. #1 \right|}
\newcommand{\pdn}[3]{\frac{\partial^{#3}#1}{\partial#2^{#3}}}
\newcommand{\avg}[1] {\ensuremath{\LRc{\!\{#1\}\!}}}
\newcommand{\bnote}[1]{#1}
\newcommand{\rnote}[1]{#1}
\begin{document}

\begin{frontmatter}
\title{Entropy stable reduced order modeling of nonlinear conservation laws}

\author[rice]{Jesse Chan}
\ead{Jesse.Chan@rice.edu}
\address[rice]{Department of Computational and Applied Mathematics, Rice University, 6100 Main St, Houston, TX, 77005}

\begin{abstract}
Reduced order models of nonlinear conservation laws in fluid dynamics do not typically inherit stability properties of the full order model. We introduce projection-based hyper-reduced models of nonlinear conservation laws which are globally conservative and inherit a semi-discrete entropy inequality independently of the choice of basis and choice of parameters.  
\end{abstract}
\end{frontmatter}

%\maketitle

%\tableofcontents

\section{Introduction}

Projection-based model reduction constructs low-dimensional surrogate models for many-query scenarios (e.g., simulations over multiple parameter values) that can be evaluated over a range of parameters at a low \textit{online} cost in exchange for a more expensive \textit{offline} pre-computation step \cite{benner2015survey}.  The development of reduced order models (ROMs) is relatively mature for several classes of problems (e.g., linear time-invariant systems, coercive elliptic PDEs).  However, the construction of robust and stable ROMs for transient and convection-dominated problems remains an active area of research \cite{cagniart2019model}.  

For certain PDEs, ``structure-preserving'' ROMs provide robustness and stability by reproducing energetic properties of the full system at the discrete level.  ROMs which retain either Lagrangian \cite{lall2003structure, carlberg2015preserving} or Hamiltonian structure \cite{benner2012interpolation, gugercin2012structure, peng2016symplectic, chaturantabut2016structure, gong2017structure, afkham2017structure, afkham2018structure} have been constructed by combining Galerkin projection with an appropriate formulation of the full order model, and similar energy-conserving ROMs have been constructed for the incompressible Navier-Stokes equations \cite{farhat2014dimensional, farhat2015structure}.  The construction of structure-preserving ROMs for nonlinear conservation laws in fluid flow, however, remains an open problem.  For example, Galerkin projection yields ROMs which become unstable as the number of reduced basis functions (modes) is increased \cite{bui2007goal, carlberg2013gnat}.  As a result, stabilized discretizations are often employed.  Petrov-Galerkin ROMs, which use an alternative test basis \cite{maday2002blackbox, rozza2007stability, bui2007goal, serre2012reliable, amsallem2012stabilization, rozza2013reduced, ballarin2015supremizer, carlberg2017galerkin}, are a popular alternative, as are additional residual-based stabilization or dissipation terms \cite{wang2012proper, kalashnikova2014stabilization, caiazzo2014numerical, balajewicz2016minimal}.  Such models improve robustness in practice, though they do not provide a theoretical foundation for stability.  

The construction of stable ROMs for the compressible Navier-Stokes equations is further complicated by the non-trivial structure of the equations.  In response, practitioners have developed structure-preserving entropy stable ROMs for simplifications of the underlying PDE (such as the time-dependent \textit{linearized} compressible Navier-Stokes equations \cite{barone2009stable, kalashnikova2010stability, amsallem2012stabilization, kalashnikova2014construction}) and extrapolate such models to the full nonlinear equations.  A promising alternative is to enforce physical conditions such as such as kinetic energy preservation, which are empirically related to stability \cite{maboudi2018conservative}.  These methods significantly improve the robustness of numerical methods in practice, and guarantee a discrete entropy inequality for systems of nonlinear conservation laws which yield stable split formulations \cite{fisher2013discretely}.  

The approach taken in this work differs from the existing literature in the treatment of nonlinear terms.  In \cite{maboudi2018conservative}, equations are rewritten such that the nonlinear terms involve only polynomial (quadratic and cubic) nonlinearities, which can be evaluated exactly using precomputed matrices.  In this work, we approximate nonlinear terms using hyper-reduction techniques based on empirically computed quadrature rules \cite{an2008optimizing, hernandez2017dimensional}.  The hyper-reduction approach incurs approximation error and additional computational cost; however, it is allows for the generalization of the split forms in \cite{maboudi2018conservative} to nonlinearly entropy stable formulations.  We directly construct entropy stable ROMs for nonlinear conservation laws by combining hyper-reduction with a modified Galerkin projection of an appropriate full order model.  The approach taken in this paper combines techniques from entropy stable finite volume schemes \cite{tadmor1987numerical, tadmor2003entropy} and entropy stable summation-by-parts (SBP) discretizations \cite{fisher2013high, carpenter2014entropy, chen2017entropy, crean2018entropy, chan2017discretely, chan2019skew} to produce entropy stable reduced order models.  

\rnote{We note that this work focuses on classical model reduction techniques, which lose effectiveness for general transport-type phenomena \cite{reiss2018shifted, abgrall2018model, rim2018transport, cagniart2019model}.  This is tied to difficulties in approximating convected solutions using a fixed reduced basis.  Despite these challenges, classical approaches are still in model reduction of transport-type equations for specific problem setups \cite{carlberg2013gnat}.  In this work, we restrict ourselves to classical model reduction techniques.  Challenges associated with the low-dimensional approximation of transport-type solutions will be addressed in future work. }

The paper is organized as follows.  Section~\ref{sec:2} introduces a full order model on 1D periodic domains based on entropy stable finite volume schemes.  Section~\ref{sec:3} describes how to construct an entropy stable reduced basis approximation, while Section~\ref{sec:4} discusses entropy stable hyper-reduction techniques to reduce costs associated with the evaluation of nonlinear terms.  Section~\ref{sec:6} describes how to extend the aforementioned approaches to non-periodic boundary conditions, and Section~\ref{sec:7} describes the extension to higher dimensions.  We conclude in Section~\ref{sec:8} with numerical experiments which verifying the presented theoretical results.

%\section{Entropy inequalities for nonlinear conservation laws}
%\label{sec:1}

\section{The full order model: entropy stable finite volume schemes}
\label{sec:2}

We briefly summarize entropy inequalities associated with systems of nonlinear conservation laws.  Let $\Omega$ denote some domain with boundary $\partial \Omega$.  Nonlinear conservation laws are expressed as a system of partial differential equations (PDEs) 
\begin{equation}
\pd{\bm{u}}{t}  + \sum_{i=1}^d\pd{\bm{f}_i(\bm{u})}{x_i} = 0, \qquad 
S(\bm{u}) \text{ is a convex function}, \qquad
\bm{v}(\bm{u}) = \pd{S}{\bm{u}},
\label{eq:nonlineqs}
\end{equation}
where $\bm{u}\in \mathbb{R}^n$ are the conservative variables, $\bm{f}_i$ are nonlinear fluxes, and $\bm{v}(\bm{u})$ are the \textit{entropy variables}.  By multiplying (\ref{eq:nonlineqs}) by the entropy variables, viscosity solutions \cite{oleinik1957discontinuous, kruvzkov1970first}  of popular fluid systems (e.g., shallow water, compressible Euler and Navier-Stokes \cite{hughes1986new, chen2017entropy}) can be shown to satisfy 
\begin{equation}
\int_{\Omega}\pd{S(\bm{u})}{t}\diff{\bm{x}} + \sum_{i=1}^d \int_{\partial \Omega} \LRp{\bm{v}^T\bm{f}_i(\bm{u}) - \psi_i(\bm{u})}n_i \leq 0\label{eq:entropyineq},
\end{equation}
where $n_i$ denotes the $i$th component of the outward normal vector, \bnote{and $\psi_i(\bm{u})$ denotes the entropy potential for the $i$th coordinate}.  
The entropy inequality (\ref{eq:entropyineq}) is the analogue of energy stability for nonlinear conservation laws \cite{mock1980systems, harten1983symmetric}.  
However, due to the use of inexact treatment of nonlinear terms (e.g., from collocation approximations or inexact quadrature), most numerical methods for nonlinear conservation laws do not satisfy a discrete analogue of this stability condition.  

The reduced order models in this work are constructed using full order models based on entropy stable finite volume methods \cite{tadmor1987numerical}, which reproduce a discrete version of the continuous entropy inequality (\ref{eq:entropyineq}).  For simplicity, we illustrate the construction of full and reduced order models on a periodic 1D domain.  We note that, while the reduced order model will be constructed from an entropy stable scheme, the solution snapshots do not need to be generated by an entropy stable full order model.  Throughout this work, we will assume that the both full and reduced models yield ``physically relevant'' solutions, such that the entropy is a convex function.  For example, for the compressible Euler and Navier-Stokes equations, we will assume that the thermodynamic variables (density and energy/temperature/pressure) are positive.  Guaranteeing both positivity and accuracy of numerical solutions for general discretizations remains challenging \cite{guermond2016invariant, guermond2019invariant}, and will be explored in future work.

A key ingredient of the full and reduced order models considered in this work are \textit{entropy conservative} finite volume numerical fluxes.  Let $\bm{u}_L, \bm{u}_R$ denote left and right states.  Then, a two-point numerical flux $\bm{f}_S(\bm{u}_L, \bm{u}_R)$ is entropy conservative if it satisfies the following three conditions
\begin{gather}
\bm{f}_S(\bm{u},\bm{u}) = \bm{f}(\bm{u}), \qquad \text{(consistency)} \label{eq:esflux}\\
\bm{f}_S(\bm{u}_L,\bm{u}_R) = \bm{f}_S(\bm{u}_R,\bm{u}_R), \qquad \text{(symmetry)} \nonumber\\
\LRp{\bm{v}_L-\bm{v}_R}^T\bm{f}_S(\bm{u}_L,\bm{u}_R) = \psi(\bm{u}_L) - \psi(\bm{u}_R), \qquad \text{(entropy conservation)}\nonumber,
\end{gather}
These fluxes are used to construct entropy conservative and entropy stable finite volume schemes \cite{tadmor1987numerical, chandrashekar2013kinetic, ray2016entropy}.  Let the domain be decomposed into $K$ cells of size $\Delta x$, and let ${\LRp{\bm{u}_h}_i}$ denote the vector containing mean values of the vector of conservation variables over the $i$th cell.  An entropy conservative finite volume method results from discretizing the integral form of the conservation law as follows
\begin{align}
&\td{{\LRp{\bm{u}_h}_1}}{t} + \frac{\bm{f}_S({\LRp{\bm{u}_h}_2},\LRp{\bm{u}_h}_1)-\bm{f}_S({\LRp{\bm{u}_h}_1},{\LRp{\bm{u}_h}_K})}{\Delta x} = \bm{0}, \label{eq:nonlineqs1d}\\
&\td{{\LRp{\bm{u}_h}_i}}{t} + \frac{\bm{f}_S({\LRp{\bm{u}_h}_{i+1}},\LRp{\bm{u}_h}_i)-\bm{f}_S({\LRp{\bm{u}_h}_i},{\LRp{\bm{u}_h}_i-1})}{\Delta x} = \bm{0}, \qquad 1 < i < K \nonumber\\
&\td{{\LRp{\bm{u}_h}_K}}{t} + \frac{\bm{f}_S({\LRp{\bm{u}_h}_1},\LRp{\bm{u}_h}_K)-\bm{f}_S({\LRp{\bm{u}_h}_K},{\LRp{\bm{u}_h}_K-1})}{\Delta x} = \bm{0},\nonumber
\end{align}
where periodicity is imposed through the equations for ${\LRp{\bm{u}_h}_1}, {\LRp{\bm{u}_h}_K}$.  

We first rewrite the system (\ref{eq:nonlineqs1d}) in a matrix form which is more amenable to model reduction.  We define the skew-symmetric differentiation matrix $\bm{Q}$ and flux matrix $\bm{F}$ such that
\begin{equation}
\bm{Q} = \frac{1}{2}\begin{bmatrix}
0 & 1 & &\ldots & -1\\
-1 & 0 & 1 &&  \\
& -1 & 0 & 1 &  \\
 & & & \ddots &  \\
1 & &\ldots  & -1 & 0
\end{bmatrix}, \qquad \bm{F}_{ij} = \bm{f}_{S}\LRp{\LRp{\bm{u}_h}_i, \LRp{\bm{u}_h}_j}.
\label{eq:Qmat}
\end{equation}
\bnote{Let $\bm{1}$ denote the vector of all ones.}  Note that $\bm{Q}\bm{1} = \bm{0}$, that the matrix $\bm{F}$ is symmetric (due to symmetry of $\bm{f}_S$), and that the diagonal of $\bm{F}$ is equal to $\bm{f}(\bm{u})$ due to flux consistency.  For a scalar nonlinear conservation law, the matrix-based formulation of (\ref{eq:nonlineqs1d}) is then equivalent to
\begin{align}
\Delta x \td{\bm{u}_h}{t} + 2\LRp{{\bm{Q}}\circ \bm{F}}\bm{1} = \bm{0}.
\label{eq:es}
\end{align}
where $\circ$ denotes the Hadamard product, and each entry of $\bm{u}_h$ corresponds to a point $\bm{x}_i$ in the domain.  In this setting, $\bm{Q}\circ\bm{F}$ extracts and takes linear combinations of nonlinear flux interactions between different nodal values of $(\bm{u}_h)_i$ and $(\bm{u}_h)_j$.  \rnote{We note that this reformulation using the Hadamard product is non-standard within the finite volume literature, but is more common in the SBP finite difference literature}.  The discretization of the nonlinear flux term using $\LRp{\bm{Q}\circ \bm{F}}\bm{1}$ is commonly referred to as \textit{flux differencing} \cite{carpenter2014entropy, gassner2016split, chen2017entropy, crean2018entropy, chan2017discretely}.\footnote{The factor of 1/2 present in the definition of $\bm{Q}$ is to make (\ref{eq:es}) consistent with the entropy stable SBP literature.  The factor of 2 in (\ref{eq:es}) can also be derived from the chain rule \cite{chen2017entropy, crean2018entropy}.}  

For a system of $n$ nonlinear conservation laws, all matrices are treated in a Kronecker product fashion.  Let the vector $\bm{u}_h$ now correspond to the concatenated vector of components of the solution $\bm{u}_h = [\bm{u}_h^1, \ldots, \bm{u}_h^n]^T$.  Then, the matrix formulation of (\ref{eq:nonlineqs1d}) is given by
\begin{gather*}
\Delta x\td{\bm{u}_h}{t} + \LRp{\LRp{\bm{I}_{n \times n} \otimes \bm{Q} }\circ \bm{F}}\bm{1} = \bm{0},\\
\bm{F} = \begin{bmatrix}
\bm{F}_{1} & &\\
& \ddots &\\
&& \bm{F}_{n} \\
\end{bmatrix}, \qquad 
\LRp{\bm{F}_{i}}_{jk} = \bm{f}_{i,S}\LRp{\LRp{\bm{u}_h}_j, \LRp{\bm{u}_h}_k}, \qquad \LRp{\bm{u}_h}_j = \begin{bmatrix}\LRp{\bm{u}^1_h}_j & \LRp{\bm{u}^2_h}_j & \ldots & \LRp{\bm{u}^n_h}_j\end{bmatrix}^T.
\label{eq:essys}
\end{gather*}
Here, $\bm{F}$ is a block diagonal matrix where the $i$th block corresponds to the evaluation of the $i$th component of the numerical flux $\bm{f}_{i,S}(\bm{u}_L,\bm{u}_R)$.  From this point on, we drop explicit references to components and the Kronecker product to simplify notation, so that (\ref{eq:es}) applies to both scalar equations and systems.

We next show that the full order model satisfies a discrete conservation of entropy.  
\begin{theorem}
\label{thm:ecfom}
Let $\bm{u}_h(t)$ be a solution of (\ref{eq:essys}) for which the entropy $S(\bm{u}_h)$ is convex at each time $t$, \bnote{and let the flux $\bm{f}_S(\bm{u}_L,\bm{u}_R)$ be entropy conservative as defined by (\ref{eq:esflux})}.  Then, $\bm{u}_h$ satisfies the semi-discrete conservation of entropy 
\[
\Delta x \bm{1}^T\td{S(\bm{u}_h)}{t} = 0.
\]
\end{theorem}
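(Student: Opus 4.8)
The plan is to reduce the statement to an algebraic identity that is closed by the entropy conservation property of the numerical flux. First I would apply the chain rule cell-by-cell: since $\bm{v}(\bm{u}) = \pd{S}{\bm{u}}$ (well-defined because convexity of $S$ makes the map $\bm{u}\mapsto\bm{v}$ a valid change of variables), the entropy at cell $i$ satisfies $\td{S\LRp{\LRp{\bm{u}_h}_i}}{t} = \bm{v}\LRp{\LRp{\bm{u}_h}_i}^T \td{\LRp{\bm{u}_h}_i}{t}$. Summing over cells and collecting the nodal entropy variables into a concatenated vector $\bm{v}_h$ sharing the block structure of $\bm{u}_h$, this gives $\bm{1}^T \td{S(\bm{u}_h)}{t} = \bm{v}_h^T \td{\bm{u}_h}{t}$. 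Substituting the semi-discrete system (\ref{eq:essys}) then yields
\[
\Delta x \,\bm{1}^T\td{S(\bm{u}_h)}{t} = -\bm{v}_h^T\LRp{\LRp{\bm{I}_{n\times n}\otimes\bm{Q}}\circ\bm{F}}\bm{1},
\]
so it suffices to show the right-hand side vanishes.

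Next I would exploit the block-diagonal structure of both $\bm{I}_{n\times n}\otimes\bm{Q}$ and $\bm{F}$, so that $\LRp{\bm{I}_{n\times n}\otimes\bm{Q}}\circ\bm{F}$ has diagonal blocks $\bm{Q}\circ\bm{F}_{k}$ and the quadratic form splits componentwise as $\sum_k\LRp{\bm{v}_h^k}^T\LRp{\bm{Q}\circ\bm{F}_{k}}\bm{1} = \sum_k\sum_{i,j}\LRp{\bm{v}_h^k}_i Q_{ij}\LRp{\bm{F}_{k}}_{ij}$. The key structural observation is that each block $\bm{Q}\circ\bm{F}_{k}$ is skew-symmetric, since $\bm{Q}$ is skew-symmetric and $\bm{F}_{k}$ is symmetric (by symmetry of the flux). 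I would use this to symmetrize the inner double sum, writing $\sum_{i,j}\LRp{\bm{v}_h^k}_i Q_{ij}\LRp{\bm{F}_{k}}_{ij} = \frac{1}{2}\sum_{i,j}\LRp{\LRp{\bm{v}_h^k}_i-\LRp{\bm{v}_h^k}_j}Q_{ij}\LRp{\bm{F}_{k}}_{ij}$.

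The crucial step — and the one that makes the entropy conservation condition indispensable — is reassembling the sum over the $n$ components. Interchanging summation and recognizing that $\sum_k\LRp{\LRp{\bm{v}_h^k}_i-\LRp{\bm{v}_h^k}_j}\LRp{\bm{F}_{k}}_{ij} = \LRp{\bm{v}\LRp{\LRp{\bm{u}_h}_i}-\bm{v}\LRp{\LRp{\bm{u}_h}_j}}^T\bm{f}_S\LRp{\LRp{\bm{u}_h}_i,\LRp{\bm{u}_h}_j}$, the entropy conservation property in (\ref{eq:esflux}) replaces this inner product by the scalar potential difference $\psi\LRp{\LRp{\bm{u}_h}_i}-\psi\LRp{\LRp{\bm{u}_h}_j}$. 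The full quadratic form then collapses to $\frac{1}{2}\sum_{i,j}Q_{ij}\LRp{\psi\LRp{\LRp{\bm{u}_h}_i}-\psi\LRp{\LRp{\bm{u}_h}_j}}$, which is linear in the nodal potentials. Finally I would split this into $\sum_i\psi\LRp{\LRp{\bm{u}_h}_i}\sum_j Q_{ij}$ minus $\sum_j\psi\LRp{\LRp{\bm{u}_h}_j}\sum_i Q_{ij}$; both vanish because $\bm{Q}\bm{1}=\bm{0}$ (zero row sums) and, by skew-symmetry, $\bm{1}^T\bm{Q}=\bm{0}^T$ (zero column sums). This establishes the claimed identity. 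I expect the main difficulty to be purely organizational: tracking the Kronecker/component indexing carefully so that the per-component skew-symmetrization and the vector-valued entropy conservation identity combine without error.
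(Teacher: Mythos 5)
Your proposal is correct and follows essentially the same route as the paper's proof: the chain rule identifies $\bm{v}_h^T\td{\bm{u}_h}{t}$ with $\bm{1}^T\td{S(\bm{u}_h)}{t}$, the flux term is symmetrized into the difference form $\frac{1}{2}\sum_{ij}\bm{Q}_{ij}\LRp{\LRp{\bm{v}_h}_i-\LRp{\bm{v}_h}_j}^T\bm{f}_S$ using skew-symmetry of $\bm{Q}$ together with symmetry of $\bm{f}_S$, the entropy conservation condition collapses this to potential differences, and $\bm{Q}\bm{1}=\bm{0}$ with skew-symmetry finishes the argument. The only difference is presentational: you carry out the Kronecker/componentwise bookkeeping explicitly, whereas the paper suppresses component indices and works directly with block vectors.
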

\begin{proof}
The proof can be found in the literature \cite{tadmor1987numerical, tadmor2003entropy, tadmor2016entropy}.  Inspired by entropy stable summation by parts (SBP) schemes \cite{carpenter2014entropy, gassner2016split, chen2017entropy, crean2018entropy, chan2017discretely}, we present an alternative proof which relies only on matrix properties of $\bm{Q}$.  We test (\ref{eq:es}) with the vector of entropy variables $\bm{v}_h = \bm{v}(\bm{u}_h)$
\[
\Delta x \bm{v}_h^T\td{\bm{u}_h}{t} + \bm{v}_h^T2\LRp{{\bm{Q}}\circ \bm{F}}\bm{1} = 0.
\]
Assuming continuity in time and using the definition of the entropy variables in (\ref{eq:nonlineqs}), $\bm{v}_h^T\td{\bm{u}_h}{t}$ simplifies to
\begin{align*}
{\td{S(\bm{u}_h)}{\bm{u}}}^T\td{\bm{u}_h}{t} = \sum_j \LRp{\td{S(\bm{u}_h)}{\bm{u}}}_j^T\td{\LRp{\bm{u}_h}_j}{t} =  \sum_j \td{S(\LRp{\bm{u}_h}_j)}{t}  =  \bm{1}^T\td{S(\bm{u}_h)}{t}.
\end{align*}
Using skew-symmetry of $\bm{Q}$, the flux term $\bm{v}_h^T2\LRp{{\bm{Q}}\circ \bm{F}}\bm{1}$ yields
\begin{align*}
\sum_{ij} \LRp{\bm{v}_h}_i^T \bnote{2}\bm{Q}_{ij} \bm{f}_{S}\LRp{\LRp{\bm{u}_h}_i, \LRp{\bm{u}_h}_j} = \sum_{ij} \LRp{\bm{Q}_{ij}-\bm{Q}_{ji}} \LRp{\bm{v}_h}_i^T  \bm{f}_{S}\LRp{\LRp{\bm{u}_h}_i, \LRp{\bm{u}_h}_j}.
\end{align*}
Rearranging indices and using the symmetry of $\bm{f}_S(\bm{u}_L,\bm{u}_R) = \bm{f}_S(\bm{u}_R,\bm{u}_L)$ exposes the entropy conservation condition in the sum
\begin{align*}
&\sum_{ij} \LRp{\bm{Q}_{ij}-\bm{Q}_{ji}} \LRp{\bm{v}_h}_i^T \bm{f}_{S}\LRp{\LRp{\bm{u}_h}_i, \LRp{\bm{u}_h}_j} \\
=&\sum_{ij} \bm{Q}_{ij} \LRp{\LRp{\bm{v}_h}_i-\LRp{\bm{v}_h}_j}^T  \bm{f}_{S}\LRp{\LRp{\bm{u}_h}_i, \LRp{\bm{u}_h}_j}\\
=&\sum_{ij} \bm{Q}_{ij} \LRp{\psi\LRp{\LRp{\bm{u}_h}_i} - \psi(\LRp{\bm{u}_h}_j)} = \bm{\psi}^T\bm{Q}\bm{1}-\bm{1}^T\bm{Q}\bm{\psi} = 0,
\end{align*}
where we have used that $\bm{Q}=-\bm{Q}^T$ and $\bm{Q}\bm{1} = \bm{0}$ in the final step.

\end{proof}

\subsection{Viscosity and entropy dissipation} 
\label{sec:entropydissipation}
Theorem~\ref{thm:ecfom} shows that the formulation (\ref{eq:essys}) preserves a semi-discrete conservation of entropy.  However, in the presence of shock discontinuities, entropy should be dissipated instead of conserved.  To mimic this at the semi-discrete level, we add entropy dissipation to (\ref{eq:es}) through appropriate viscosity terms.  For example, it was shown in \cite{tadmor2006entropy} that a centered approximation of the Navier-Stokes viscosity is entropy dissipative.  In this work, we apply a simple Laplacian artificial viscosity to each component of the solution \cite{upperman2019entropy}
\begin{equation}
\Delta x \td{\bm{u}_h}{t} + 2\LRp{{\bm{Q}}\circ \bm{F}}\bm{1} + \epsilon \bm{K}\bm{u}_h= 0, \qquad \bm{K} = \frac{1}{\bnote{\Delta x}} \begin{bmatrix}
1 & -1 & & \\
-1 & 2 & \ddots &\\
 & \ddots & \ddots & -1\\
 &  & -1& 1\\
\end{bmatrix}
\label{eq:esvisc}
\end{equation}
where $\epsilon$ is the visosity coefficient.  This choice is intended to simplify the presentation of entropy stable treatments of diffusion terms.  Future work will analyze both physically relevant viscosities and more nuanced artificial dissipation mechanisms.  

\begin{remark}
\bnote{The choice of Neumann boundary conditions for the Laplacian matrix $\bm{K}$ is arbitrary, and $\bm{K}$ can be replaced with the periodic Laplacian.  Future works will investigate different viscous boundary conditions (such as solid wall conditions) and their impacts on reduced order models.}
\end{remark}

If $\bm{v}_h^T\bm{K}\bm{u}_h \geq 0$, then the solution satisfies a discrete dissipation of entropy
\begin{equation}
\Delta x \bm{1}^T\td{S(\bm{u}_h)}{t} = -\epsilon\bm{v}_h^T\bm{K}\bm{u}_h \leq 0.
\label{eq:entropybalancefom}
\end{equation}
The proof is similar to the one given in \cite{tadmor2006entropy}.  We rewrite the viscous term as
\[
\bm{v}_h^T\bm{K}\bm{u}_h = \bm{v}_h^T \frac{1}{\Delta x^2}\begin{bmatrix}
\LRp{\bm{u}_h}_1 - \LRp{\bm{u}_h}_2\\ 
%\LRp{\LRp{\bm{u}_h}_2 - \LRp{\bm{u}_h}_1} - \LRp{\LRp{\bm{u}_h}_3 - \LRp{\bm{u}_h}_2}\\
\vdots\\
\LRp{\LRp{\bm{u}_h}_i - \LRp{\bm{u}_h}_{i-1}} - \LRp{\LRp{\bm{u}_h}_{i+1} - \LRp{\bm{u}_h}_{i}}\\
\vdots\\
\LRp{\bm{u}_h}_K- \LRp{\bm{u}_h}_{K-1} 
\end{bmatrix}.
\]
Since $\bm{v}_h$ and $\bm{u}_h$ are related through a differentiable invertible mapping, the mean value theorem implies that
\[
\LRp{\bm{u}_h}_i - \LRp{\bm{u}_h}_{i-1} = \LRp{\pd{\bm{u}}{\bm{v}}}_{i,i-1} \LRp{\LRp{\bm{v}_h}_i - \LRp{\bm{v}_h}_{i-1}}
\]
where $\LRp{\pd{\bm{u}}{\bm{v}}}_{i,i-1}$ is the Jacobian matrix evaluated at some state between $(\bm{u}_h)_i$ and $(\bm{u}_h)_{\bnote{i-1}}$.\footnote{\rnote{We note that this ``average'' Jacobian is evaluated componentwise via a path integral \cite{tadmor2003entropy}.}}  Since the Jacobian is related to the Hessian of the entropy through $\pdn{S(\bm{u})}{\bm{u}}{2} = \pd{\bm{v}}{\bm{u}} = \LRp{\pd{\bm{u}}{\bm{v}}}^{-1}$, it is positive definite so long as $S(\bm{u})$ is convex.  Substituting these expressions into the viscous terms and using positive definiteness of $\pd{\bm{u}}{\bm{v}}$  yields 
\begin{gather}
\bm{v}_h^T\bm{K}\bm{u}_h = 
%\bm{v}_h^T \frac{1}{\Delta x^2}\begin{bmatrix}
%\LRp{\pd{\bm{u}}{\bm{v}}}_{2,1} \LRp{\LRp{\bm{v}_h}_1 - \LRp{\bm{v}_h}_{2}}\\ 
%\vdots\\
%\LRp{\pd{\bm{u}}{\bm{v}}}_{i,i-1} \LRp{\LRp{\bm{v}_h}_i - \LRp{\bm{v}_h}_{i-1}} - \LRp{\pd{\bm{u}}{\bm{v}}}_{i+1,i} \LRp{\LRp{\bm{v}_h}_{i+1} - \LRp{\bm{v}_h}_{i}}\\
%\vdots\\
%\LRp{\pd{\bm{u}}{\bm{v}}}_{K,K-1} \LRp{\LRp{\bm{v}_h}_K - \LRp{\bm{v}_h}_{K-1}}
%\end{bmatrix}\\
 \frac{1}{\Delta x^2}\sum_{i=1}^{K-1} \LRp{\pd{\bm{u}}{\bm{v}}}_{i+1,i} \LRb{\LRp{\bm{v}_h}_{i} - \LRp{\bm{v}_h}_{i+1}}^2 \geq 0.
 \label{eq:entropydissfom}
\end{gather}

\section{An entropy conservative reduced order model}
\label{sec:3}
We can now formulate a reduced basis approximation on periodic domains using the entropy stable full order models described in the previous section.  Boundary conditions will be addressed later in Section~\ref{sec:6}.  We also note that this reduced order model is not practical, since the cost of evaluating nonlinear terms scales with the size of the full order model.  Later sections will discuss how to reduce the cost of nonlinear evaluations using appropriate hyper-reduction techniques.

Let $\LRc{{\phi}_j(x)}_{j=1}^{N}$ denote a reduced basis for each component of the solution, which may be generated using (for example) principal orthogonal decomposition (POD) or the reduced basis procedure.  Let $\bm{V}$ denote the generalized Vandermonde matrix whose columns contain evaluations of $\phi_j$ at grid points $x_i$
\[
\bm{V}_{ij} = {\phi}_j(x_i).
\]
\rnote{We assume that the solution is well-approximated in the reduced basis over the entire time-window of the simulation, } and approximate grid values of the solution by $\bm{u}_h = \bm{V}\bm{u}_N$.  Here, $\bm{u}_N$ denote ``modal'' coefficients  of the solution in the reduced basis.  \rnote{Again, we emphasize that solution snapshots for transport-type equations may not be well-approximated by a low-dimensional reduced basis.  Future work will attempt to combine techniques introduced in this work with methods to address this approximation issue \cite{reiss2018shifted, rim2018transport, cagniart2019model}.}

Ignoring viscosity terms for now, plugging this expression into (\ref{eq:es}) and enforcing that the residual is orthogonal to all columns of $\bm{V}$ (Galerkin projection) yields a reduced system
\begin{align*}
\Delta x\bm{V}^T\bm{V}\td{\bm{u}_N}{t} + 2\bm{V}^T\LRp{\bm{Q}\circ \bm{F}}\bm{1} = 0.
\end{align*}
We will show that, while this formulation is not entropy conservative, a slight modification recovers semi-discrete entropy conservation.  We motivate this modification by first considering the time derivative term.  Because of the Galerkin projection, we can no longer directly test with the vector of entropy variables, which may not lie  in the span of the reduced basis functions.  However, we can test with an appropriate \textit{projection} of the entropy variables.  Let $\bm{v}_N$ denote coefficients of the projection of the entropy variables
\[
\bm{v}_N = \LRp{\bm{V}^T\bm{V}}^{-1}\bm{V}^T \bm{v}\LRp{\bm{V}\bm{u}_N} = \bm{V}^{\dagger} \bm{v}\LRp{\bm{V}\bm{u}_N},
\]
where $\bm{V}^{\dagger}$ is the pseudoinverse of $\bm{V}$.  Then, we can recover the time derivative of the discrete entropy by testing with the coefficients $\bm{v}_N$
\begin{align}
\bm{v}_N^T\bm{V}^T\bm{V}\td{\bm{u}_N}{t} &= \bm{v}\LRp{\bm{V}\bm{u}_N}^T \bm{V}\LRp{\bm{V}^T\bm{V}}^{-1}\bm{V}^T\bm{V} \td{\bm{u}_N}{t} \label{eq:esromtime}\\
&= 
 \bm{v}\LRp{\bm{V}\bm{u}_N}^T \td{\LRp{\bm{V}\bm{u}_N}}{t} = \bm{1}^T\td{S(\bm{V}\bm{u}_N)}{t}.\nonumber
\end{align}
The remainder of the proof requires showing that 
\[
\bm{v}_N^T\bm{V}^T\LRp{\bm{Q}\circ \bm{F}}\bm{1} = \tilde{\bm{v}}^T\LRp{\bm{Q}\circ \bm{F}}\bm{1}  = 0, \qquad \tilde{\bm{v}} = \bm{V}\bm{v}_N = \bm{V}\bm{V}^{\dagger} \bm{v}\LRp{\bm{V}\bm{u}_N},
\]
where we have introduced the grid values of the projected entropy variables as $\tilde{\bm{v}}$.
We can repeat the steps of the proof of Theorem~\ref{thm:ecfom} up to the point when we invoke the entropy conservation condition of (\ref{eq:esflux})
\begin{align*}
\tilde{\bm{v}}^T\LRp{{\bm{Q}}\circ \bm{F}}\bm{1} &= \frac{1}{2}\sum_{ij} \bm{Q}_{ij} \LRp{\tilde{\bm{v}}_i-\tilde{\bm{v}}_j}^T \bm{f}_{S}\LRp{\LRp{\bm{u}_h}_i, \LRp{\bm{u}_h}_j}\\
&\neq \frac{1}{2}\sum_{ij} \bm{Q}_{ij} \LRp{ \psi((\bm{u}_h)_i)- \psi((\bm{u}_h)_j)}.
\end{align*}
This is not possible due to the fact that the projected entropy variables $\tilde{\bm{v}}$ are no longer mappings of the grid values of the conservative variables $\bm{u}_h = \bm{V}\bm{u}_N$.  To remedy this, we follow approaches taken in \cite{parsani2016entropy, chan2017discretely} and replace the grid values of ${\bm{u}_h}$ used to evaluate the flux matrix $\bm{F}$ with values of the \textit{entropy-projected conservative variables} $\tilde{\bm{u}}$
\[
\tilde{\bm{u}} = \bm{u}\LRp{\bm{V}\bm{V}^{\dagger}\bm{v}\LRp{\bm{V}\bm{u}_N}} = \bm{u}\LRp{\tilde{\bm{v}}}.
\]
The entropy projected conservative variables $\tilde{\bm{u}}$ are thus mappings of the projected entropy variables.  We note that $\bm{V}\bm{V}^{\dagger}$ can also be interpreted as a discrete approximation of the $L^2$ projection operator onto the span of the reduced basis $\phi_j$.  One can then construct a semi-discretely entropy conservative reduced model.  
\begin{theorem} 
Let the coefficients $\bm{u}_N$ solve
\begin{gather}
\Delta x\bm{V}^T\bm{V}\td{\bm{u}_N}{t} + 2\bm{V}^T\LRp{\bm{Q}\circ \bm{F}}\bm{1} = \bm{0}, \label{eq:esrom}\\
\LRp{\bm{F}}_{ij} = \bm{f}_S\LRp{\tilde{\bm{u}}_i,\tilde{\bm{u}}_j}, \qquad \tilde{\bm{u}} = \bm{u}\LRp{\bm{V}\bm{V}^{\dagger}\bm{v}\LRp{\bm{V}\bm{u}_N}}.\nonumber
\end{gather}
Then, the solution satisfies the following semi-discrete conservation of entropy
\[
\Delta x\bm{1}^T\td{S(\bm{V}\bm{u}_N)}{t} = 0.
\]
Additionally, if $\bm{1}$ lies within the range of the reduced basis matrix $\bm{V}$ (e.g., $1$ is in the span of the reduced basis functions $\phi_1, \ldots, \phi_N$), solutions of (\ref{eq:esrom}) conserve global averages of the conservative variables.
\label{thm:esrom}
\end{theorem}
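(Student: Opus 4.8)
The plan is to mirror the matrix-based proof of Theorem~\ref{thm:ecfom}, but to test the reduced system (\ref{eq:esrom}) with the modal coefficients $\bm{v}_N = \bm{V}^{\dagger}\bm{v}\LRp{\bm{V}\bm{u}_N}$ of the \emph{projected} entropy variables rather than with the entropy variables themselves, which no longer lie in the span of the basis. Testing (\ref{eq:esrom}) with $\bm{v}_N$ gives
\[
\Delta x\,\bm{v}_N^T\bm{V}^T\bm{V}\td{\bm{u}_N}{t} + 2\,\bm{v}_N^T\bm{V}^T\LRp{\bm{Q}\circ\bm{F}}\bm{1} = 0.
\]
The time-derivative term is already handled by the computation (\ref{eq:esromtime}), which identifies $\bm{v}_N^T\bm{V}^T\bm{V}\td{\bm{u}_N}{t}$ with $\bm{1}^T\td{S(\bm{V}\bm{u}_N)}{t}$. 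Hence the first assertion reduces entirely to showing that the flux term $\bm{v}_N^T\bm{V}^T\LRp{\bm{Q}\circ\bm{F}}\bm{1}$ vanishes.

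The heart of the argument, and the step I expect to be the main obstacle, is this flux term, since it is exactly where the naive Galerkin projection fails. Writing $\tilde{\bm{v}} = \bm{V}\bm{v}_N = \bm{V}\bm{V}^{\dagger}\bm{v}\LRp{\bm{V}\bm{u}_N}$ so that $\bm{v}_N^T\bm{V}^T = \tilde{\bm{v}}^T$, I would repeat the manipulations of Theorem~\ref{thm:ecfom} verbatim (skew-symmetry of $\bm{Q}$, symmetry of $\bm{f}_S$, and re-indexing) to reach
\[
\tilde{\bm{v}}^T\LRp{\bm{Q}\circ\bm{F}}\bm{1} = \frac{1}{2}\sum_{ij}\bm{Q}_{ij}\LRp{\tilde{\bm{v}}_i - \tilde{\bm{v}}_j}^T\bm{f}_S\LRp{\tilde{\bm{u}}_i,\tilde{\bm{u}}_j}.
\]
The crucial observation is that the modification in (\ref{eq:esrom}), namely evaluating $\bm{F}$ at the entropy-projected conservative variables $\tilde{\bm{u}} = \bm{u}(\tilde{\bm{v}})$, restores the pointwise inverse relationship $\tilde{\bm{v}}_i = \bm{v}(\tilde{\bm{u}}_i)$ that was destroyed under projection. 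With this identity the entropy conservation condition of (\ref{eq:esflux}) applies to each pair, giving $\LRp{\tilde{\bm{v}}_i-\tilde{\bm{v}}_j}^T\bm{f}_S(\tilde{\bm{u}}_i,\tilde{\bm{u}}_j) = \psi(\tilde{\bm{u}}_i) - \psi(\tilde{\bm{u}}_j)$. The sum then telescopes into $\frac{1}{2}\LRp{\bm{\psi}^T\bm{Q}\bm{1} - \bm{1}^T\bm{Q}\bm{\psi}}$, which vanishes because $\bm{Q}\bm{1} = \bm{0}$ and $\bm{Q} = -\bm{Q}^T$. This completes the semi-discrete entropy conservation.

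For the conservation of global averages, I would test (\ref{eq:esrom}) with a coefficient vector $\bm{c}$ satisfying $\bm{V}\bm{c} = \bm{1}$, which exists precisely when $\bm{1}$ lies in the range of $\bm{V}$. Using $\bm{c}^T\bm{V}^T = \bm{1}^T$, this yields
\[
\Delta x\,\frac{{\rm d}}{{\rm d}t}\LRp{\bm{1}^T\bm{V}\bm{u}_N} + 2\,\bm{1}^T\LRp{\bm{Q}\circ\bm{F}}\bm{1} = 0.
\]
The remaining flux term is $\bm{1}^T\LRp{\bm{Q}\circ\bm{F}}\bm{1} = \sum_{ij}\bm{Q}_{ij}\bm{F}_{ij}$; since $\bm{Q}$ is skew-symmetric and $\bm{F}$ is symmetric, the summand is antisymmetric under the exchange $i\leftrightarrow j$ and the double sum vanishes, with the system case following componentwise from the block-diagonal structure of $\bm{F}$. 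This leaves $\frac{{\rm d}}{{\rm d}t}\LRp{\bm{1}^T\bm{V}\bm{u}_N} = 0$, i.e. the global average of the conservative variables is conserved.
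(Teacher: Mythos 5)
Your proposal is correct and follows essentially the same route as the paper's proof: testing with $\bm{v}_N$ and invoking (\ref{eq:esromtime}) for the time-derivative term, using the identity $\tilde{\bm{v}}_i = \bm{v}(\tilde{\bm{u}}_i)$ restored by the entropy-projected conservative variables to rerun the flux-term argument of Theorem~\ref{thm:ecfom}, and testing with coefficients of $\bm{1}$ together with skew-symmetry of $\bm{Q}\circ\bm{F}$ for global conservation. The only difference is that you spell out explicitly the steps the paper compresses into ``the remaining steps are identical to those of Theorem~\ref{thm:ecfom}.''
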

\begin{proof}
The conservation of entropy follows from testing with $\bm{v}_N$ and applying (\ref{eq:esromtime}).  The remaining steps are identical to those of Theorem~\ref{thm:ecfom}.  The global conservation results from testing with $\bm{1}$.  If $\bm{1}$ is in the range of $\bm{V}$, then the exist coefficients in the reduced basis matrix $\bm{e}$ such that $\bm{V}\bm{e}=\bm{1}$.  Then, 
\begin{align*}
\Delta x\bm{e}^T\bm{V}^T\bm{V}\td{\bm{u}_N}{t} + 2\bm{e}^T\bm{V}^T\LRp{\bm{Q}\circ \bm{F}}\bm{1}  = \Delta x\bm{1}^T\td{\LRp{\bm{V}\bm{u}_N}}{t} + 2\bm{1}^T\LRp{\bm{Q}\circ \bm{F}}\bm{1} = 0.
\end{align*}
Since $\bm{Q}$ is skew-symmetric and $\bm{F}$ is symmetric, $\bm{Q} \circ \bm{F}$ is skew-symmetric and $\bm{1}^T\LRp{\bm{Q}\circ \bm{F}}\bm{1} = \bm{0}$, 
\[
\Delta x\bm{1}^T\td{\LRp{\bm{V}\bm{u}_N}}{t} = 0,
\]
which is a discrete statement of conservation.
\end{proof}

\begin{remark}
In order for the formulation (\ref{eq:esrom}) to remain accurate, the entropy projected conservative variables must accurately approximate the conservative variables.  This requires that the reduced basis accurately approximates the entropy variables.  This can be taken into account, for example, by computing the POD basis vectors from snapshots of both the conservative and entropy variables.
\end{remark}
%Here, $\bm{u}_N$ are the coefficients of $u_N$ and $\bm{P} = \LRp{\bm{V}^T\bm{V}}^{-1}\bm{V}^T = \bm{V}^{\dagger}$ is the pseudo-inverse of $\bm{V}$.  

\section{Entropy conservative hyper-reduction}
\label{sec:4}
While the Galerkin projected formulation (\ref{eq:esrom}) is entropy stable, the computational cost involved in solving the semi-discrete system scales with the size of the full order model rather than the dimension of the reduced basis.  For example, explicit time-stepping methods require the evaluation of the nonlinear term $\bm{V}^T\LRp{\bm{Q} \circ \bm{F}}\bm{1}$.  Since the matrices $\bm{Q}$ and $\bm{F}$ are the original full order model matrices, the cost of the reduced system is not lower than the cost of the full order model.  

To reduce the cost of evaluating nonlinear terms, we introduce a second \textit{hyper-reduction} step \cite{ryckelynck2009hyper}.  Here, terms involving vectors of nonlinear function evaluations are approximated by terms involving nonlinear function evaluations at a subset of points \cite{barrault2004empirical, bui2004aerodynamic, chaturantabut2010nonlinear, farhat2015structure, drmac2016new, hernandez2017dimensional, yano2019lp}.  In this work, we utilize a sampling and weighting strategy \cite{farhat2015structure, hernandez2017dimensional, yano2019lp} which enables proofs of discrete entropy stability.  These hyper-reduction techniques can be interpreted as reduced quadratures, and produce approximations of the form
\[
\bm{V}^Tg(\bm{V}\bm{u}_N) \approx \bm{V}\LRp{\mathcal{I},:}^T \bm{W} g\LRp{\bm{V}\LRp{\mathcal{I},:}\bnote{\bm{u}_N}}.
\]
Here, $g(\bm{u})$ denotes a nonlinear function, $\mathcal{I}$ denotes a subset of $N_s$ row indices corresponding to sampled points, $\bm{V}\LRp{\mathcal{I},:}$ denotes the sub-matrix consisting of the $N_s$ sampled rows of $\bm{V}$, and $\bm{W} = {\rm diag}(\bm{w})$ is a $N_s\times N_s$ diagonal matrix of positive weights.  We will describe algorithms for computing hyper-reduced points and weights in Section~\ref{sec:hyperreducalgo}.

\rnote{We briefly outline our approach to hyper-reduction.  Rather than directly apply hyper-reduction on the nonlinear vector, we perform a matrix-based hyper-reduction which respects the matrix structure of the nonlinear term $\bm{V}^T\LRp{\bm{Q}\circ \bm{F}}$.  Specifically, we construct a smaller hyper-reduced matrix $\bm{Q}_s$ and approximate the term $\bm{V}^T\LRp{\bm{Q}\circ \bm{F}}$ with $\bm{V}\LRp{\mathcal{I},:}^T\bm{W}\LRp{\bm{Q}_s\circ \bm{F}_s}$, where $\bm{F}_s$ is a smaller hyper-reduced matrix containing flux evaluations between solution states at different hyper-reduced points.  Recall that the proof of entropy conservation for the full order model in Theorem~\ref{thm:ecfom} required only that the matrix $\bm{Q}$ be skew symmetric and have zero row sum. } If the hyper-reduced matrix $\bm{Q}_s$ also satisfies those properties
\[
\bm{Q}_s = -\bm{Q}_s^T, \qquad \bm{Q}_s\bm{1} = 0
\]
then we will be able to extend the proof of entropy conservation in Theorem~\ref{thm:esrom} to the hyper-reduced model.  

Unfortunately, common hyper-reduction techniques \rnote{applied naively to the matrix $\bm{Q}$} preserve either skew-symmetry or zero row sums, but not both.  For example, $\bm{Q}$ can be decomposed into the sum of local skew-symmetric matrices, and the hyper-reduction techniques of \cite{farhat2015structure, yano2019discontinuous} approximate $\bm{Q}$ using a sparse linear combination of those local matrices.  This hyper-reduction technique preserves skew-symmetry, but does not necessarily preserve the zero row sum property.  Similarly, techniques such as gappy POD or empirical interpolation \cite{everson1995karhunen, barrault2004empirical, chaturantabut2010nonlinear} may preserve the zero row sum property but not skew-symmetry.  \rnote{The next section describes a two-step hyper-reduction which retains both skew-symmetry and zero row sums of $\bm{Q}_s$ (though at the cost of sparsity).}

\subsection{Two-step hyper-reduction: compress and project}

We apply a two-step hyper-reduction procedure which preserves both the zero row sums and skew-symmetry of the hyper-reduced matrix $\bm{Q}_s$.  Rather than directly hyper-reducing the full order matrix $\bm{Q}$, we first construct a compressed ``modal'' matrix, then combine this with a projection operator based on the hyper-reduced points.  In the first step, we construct a compressed intermediate operator by combining  Galerkin projection with the ``expanded basis'' approach of \cite{hernandez2017dimensional}.  Let $\bm{V}_t$ denote a \textit{test} basis through which to extract the action of $\bm{Q}$.  \bnote{In this work, we assume that the span of the test basis includes the reduced basis, e.g., $\mathcal{R}(\bm{V})\subset \mathcal{R}(\bm{V}_t)$}.  We define the intermediate reduced operator 
\begin{equation}
\hat{\bm{Q}}_{t} = \bm{V}_{t}^T\bm{Q}\bm{V}_{t}.
\label{eq:compressQ}
\end{equation}
Let $\bm{c}$ be some vector in the span of the test basis such that $\bm{c}=\bm{V}_t \hat{\bm{c}}$. Solving the system $\bm{V}_t^T\bm{V}_t \hat{\bm{d}} = \hat{\bm{Q}}_t\hat{\bm{c}}$ yields that
\[
\bm{V}_t^T\bm{V}_t \hat{\bm{d}} = \bm{V}_t^T\bm{Q}\bm{V}_t\hat{\bm{c}} = \bm{V}_t^T\bm{Q}\bm{c} \qquad \Longrightarrow\qquad \bm{V}_t^T \LRp{\bm{V}_t\hat{\bm{d}}-\bm{Q}\bm{c}} = \bm{0}.
\]
Here, $\hat{\bm{d}}$ are the coefficients of the orthogonal projection of $\bm{Q}\bm{c}$ onto the test basis.  Note that $\hat{\bm{Q}}_t$ is skew-symmetric by construction.  Moreover, if the vector of all ones lies in the span of the test basis (e.g., $\bm{1} = \bm{V}_t\bm{e}$ for some coefficient vector $\bm{e}$), then 
\[
\hat{\bm{Q}}_t\bm{e} = \bm{0},
\]
since $\hat{\bm{Q}}_t$ exactly recovers the action of $\bm{Q}\bm{1} = \bm{0}$ and $\bm{0}$ lies within the span of any basis.  

%\subsubsection{A ``nodal'' reduced operator}

Since $\hat{\bm{Q}}_{t}$ acts on coefficients of the test basis (a ``modal'' operator), it cannot be directly applied to evaluate nonlinear functions of the solution.  However, we can construct ``nodal'' operator by combining $\hat{\bm{Q}}_t$ with appropriate mappings from hyper-reduced points to coefficients of the test basis.  We make the following assumptions on the hyper-reduced points to enable the construction of such mappings: 
\begin{assumption}
Let $\mathcal{I}$ denote the index set of hyper-reduced points.  We assume that the hyper-reduced test mass matrix $\bm{V}_t\LRp{\mathcal{I},:}^T\bm{W}\bm{V}_t\LRp{\mathcal{I},:}$ is non-singular.
%row-sampled test matrix $\bm{V}_t\LRp{\mathcal{I},:}$ is non-singular.  
 \label{ass:quad}
\end{assumption}
Note that if Assumption~\ref{ass:quad} holds, then the test mass matrix $\bm{V}\LRp{\mathcal{I},:}^T\bm{W}\bm{V}\LRp{\mathcal{I},:}$ is also non-singular, since the column space of $\bm{V}$ is contained in the column space of $\bm{V}_t$.  \rnote{We will provide a heuristic algorithm for ensuring that the hyper-reduced test mass matrix is non-singular in Section~\ref{sec:condtest}.}

If Assumption~\ref{ass:quad} holds, we can define the projection matrix $\bm{P}_t$ onto the test basis as follows:
\begin{equation}
\bm{P}_t = \LRp{\bm{V}_t\LRp{\mathcal{I},:}^T\bm{W}\bm{V}_t\LRp{\mathcal{I},:}}^{-1}\bm{V}_t\LRp{\mathcal{I},:}^T\bm{W}.
\label{eq:Pt}
\end{equation}
The projection matrix $\bm{P}_t$ maps values at hyper-reduced points to coefficients in the test basis, and is well-defined under Assumption~\ref{ass:quad}.  The key property of $\bm{P}_t$ which we will utilize is as follows: suppose that $\bm{f} = \bm{V}_t\bnote{\LRp{\mathcal{I},:}}\bm{c}$ for some coefficients $\bm{c}$.  Then,
\begin{equation}
\bm{P}_t \bm{f} = \LRp{\bm{V}_t\LRp{\mathcal{I},:}^T\bm{W}\bm{V}_t\LRp{\mathcal{I},:}}^{-1}\bm{V}_t\LRp{\mathcal{I},:}^T\bm{W} \bm{V}_t\LRp{\mathcal{I},:}\bm{c} = \bm{c}.
\label{eq:preproduce}
\end{equation}
In other words, if a vector lies in the range of $\bm{V}_t\LRp{\mathcal{I},:}$, the projection matrix $\bm{P}_t$ recovers the coefficients exactly.  We also note that if the hyper-reduced set $\mathcal{I}$ is taken to include all the points and $\bm{W}$ is a multiple of the identity (for example, $\bm{W} = \Delta x \bm{I}$), then $\bm{P}_t$ reduces to the pseudo-inverse $\bm{V}_t^{\dagger} = \LRp{\bm{V}_t\bnote{^T}\bm{V}_t}^{-1}\bm{V}_t^T$.  

We can now construct a ``nodal'' differentiation operator which satisfies both skew-symmetry and zero row sum properties: 
\begin{lemma}
Suppose that $\bm{1}$ is exactly representable in the test basis $\bm{V}_t$ and that Assumption~\ref{ass:quad} holds.  Define the matrix $\bm{Q}_t$
\begin{equation}
\bm{Q}_t = \bm{P}_t^T\hat{\bm{Q}}_t\bm{P}_t = \bm{P}_t^T\LRp{\bm{V}_t^T\bm{Q}\bm{V}_t}\bm{P}_t.
\label{eq:nodalQ}
\end{equation}
Then, $\bm{Q}_t$ is skew-symmetric and satisfies $\bm{Q}_t\bm{1} = \bm{0}$.  
\label{thm:Qt}
\end{lemma}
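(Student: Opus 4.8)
The plan is to verify the two claimed properties separately, each of which follows quickly from the factored form $\bm{Q}_t = \bm{P}_t^T\hat{\bm{Q}}_t\bm{P}_t$ in (\ref{eq:nodalQ}) together with the reproduction property (\ref{eq:preproduce}). The skew-symmetry will be purely algebraic, while the zero row sum property will require combining (\ref{eq:preproduce}) with the earlier observation about $\hat{\bm{Q}}_t$.

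For skew-symmetry, I would first record that the compressed operator is skew-symmetric: since $\bm{Q}^T = -\bm{Q}$, we have $\hat{\bm{Q}}_t^T = \bm{V}_t^T\bm{Q}^T\bm{V}_t = -\hat{\bm{Q}}_t$, exactly as noted after (\ref{eq:compressQ}). Taking the transpose of the triple product then gives $\bm{Q}_t^T = \bm{P}_t^T\hat{\bm{Q}}_t^T\bm{P}_t = -\bm{P}_t^T\hat{\bm{Q}}_t\bm{P}_t = -\bm{Q}_t$, so skew-symmetry is inherited directly from $\hat{\bm{Q}}_t$, independently of the choice of sample set $\mathcal{I}$ or weights $\bm{W}$.

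For the zero row sum property, the key is to track carefully what $\bm{1}$ means: since $\bm{Q}_t$ acts on values at the $N_s$ hyper-reduced points, $\bm{1}$ here is the $N_s$-vector of all ones. I would write $\bm{Q}_t\bm{1} = \bm{P}_t^T\hat{\bm{Q}}_t\LRp{\bm{P}_t\bm{1}}$ and evaluate $\bm{P}_t\bm{1}$ first. By hypothesis $\bm{1}$ is exactly representable in the test basis on the full grid, i.e. $\bm{1} = \bm{V}_t\bm{e}$ for some $\bm{e}$; restricting to the sampled rows gives $\bm{V}_t\LRp{\mathcal{I},:}\bm{e}$ equal to the $N_s$-vector of all ones, since the restriction of the all-ones vector to any index subset is again all ones. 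Hence the sampled all-ones vector lies in the range of $\bm{V}_t\LRp{\mathcal{I},:}$, and the reproduction property (\ref{eq:preproduce}) yields $\bm{P}_t\bm{1} = \bm{e}$.

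It then remains to invoke the observation made just before the lemma that $\hat{\bm{Q}}_t\bm{e} = \bm{0}$ whenever $\bm{1} = \bm{V}_t\bm{e}$, which holds because $\hat{\bm{Q}}_t$ reproduces the action of $\bm{Q}\bm{1} = \bm{0}$. Combining these, $\bm{Q}_t\bm{1} = \bm{P}_t^T\hat{\bm{Q}}_t\bm{e} = \bm{P}_t^T\bm{0} = \bm{0}$, as desired. The only real subtlety — and the place I would be most careful — is the dimension bookkeeping for $\bm{1}$ and the elementary but easy-to-overlook fact that restricting the all-ones vector to the sample set preserves it, so that (\ref{eq:preproduce}) applies; everything else is a one-line transpose or substitution.
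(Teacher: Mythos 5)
Your proposal is correct and follows essentially the same route as the paper: skew-symmetry is inherited directly from $\hat{\bm{Q}}_t = \bm{V}_t^T\bm{Q}\bm{V}_t$ via transposition, and the zero row sum follows from the reproduction property (\ref{eq:preproduce}) giving $\bm{P}_t\bm{1} = \bm{e}$, combined with $\hat{\bm{Q}}_t\bm{e} = \bm{0}$. Your explicit bookkeeping that restricting the all-ones vector to the sample set $\mathcal{I}$ again yields an all-ones vector (so that (\ref{eq:preproduce}) applies to the sampled rows) is a slightly more careful rendering of a step the paper treats implicitly, but the argument is the same.
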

\begin{proof}
Since $\bm{Q}$ is skew-symmetric, the matrix $\bm{Q}_t$ is skew-symmetric by construction as well.  By (\ref{eq:preproduce}), if $\bm{1}$ is exactly representable in the test basis (such that $\bm{1} = \bm{V}_t\LRp{\mathcal{I},:}\bm{e}$ for some coefficient vector $\bm{e}$  ), then  $\bm{P}_t\bm{1} = \bm{e}$.  Then, $\bm{Q}_t\bm{1} = \bm{P}_t^T\hat{\bm{Q}}_t\bm{e} = \bm{0}$, since $\hat{\bm{Q}}_t$ exactly differentiates elements of the test basis.
\end{proof}

\begin{remark}
The projection matrix $\bm{P}_t$ can also be replaced, for example, by the gappy POD projection $\LRp{\bm{V}_t\LRp{\mathcal{I},:}}^{\dagger}$ \cite{willcox2006unsteady,astrid2008missing}.  We do not observe significant differences in numerical results when doing so.  In principle, any matrix operator which maps from nodal values to modal coefficients and reproduces the test basis exactly can be used for $\bm{P}_t$.  In this work, we restrict ourselves to the projection matrix defined by (\ref{eq:Pt}), and will investigate other definitions of $\bm{P}_t$ in future work.
\end{remark}

\subsubsection{Choice of test basis}

We have not yet specified the choice of test basis $\bm{V}_t$.  The span of the test basis should include the reduced basis $\bm{V}$, and Lemma~\ref{thm:Qt} \bnote{assumes} that the span of the test basis should also contain $\bm{1}$.  However, these two conditions are insufficient to ensure accuracy for solutions of nonlinear conservation laws.  Consider, for example, the 1D periodic Burgers' equation with initial condition 
\[
u(x,0) = -\sin(\pi x).
\]
The solution $u(x,t)$ is a decaying stationary shock which is anti-symmetric across the origin at all times $t$.  The POD modes are thus also anti-symmetric across the origin; however, their derivatives $\bm{Q}\bm{V}$ are nearly symmetric across the origin, as shown in Figure~\ref{fig:modeQ}.  Since the entries of $\bm{V}^T\bm{Q}\bm{V}$ are inner products of nearly symmetric and anti-symmetric functions, $\bm{V}^T\bm{Q}\bm{V} \approx \bm{0}$, and the resulting hyper-reduced model is highly inaccurate.
\begin{figure}[!h]
\centering
\subfloat[Solution snapshots]{\includegraphics[width=.32\textwidth]{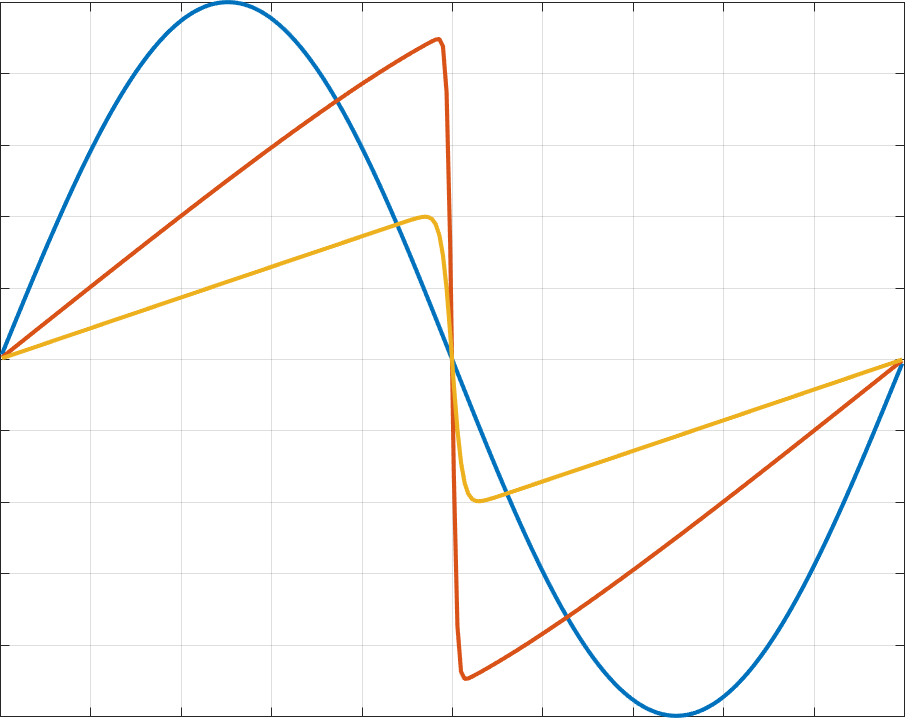}}
\hspace{.1em}
\subfloat[Modes (columns of $\bm{V}$)]{\includegraphics[width=.32\textwidth]{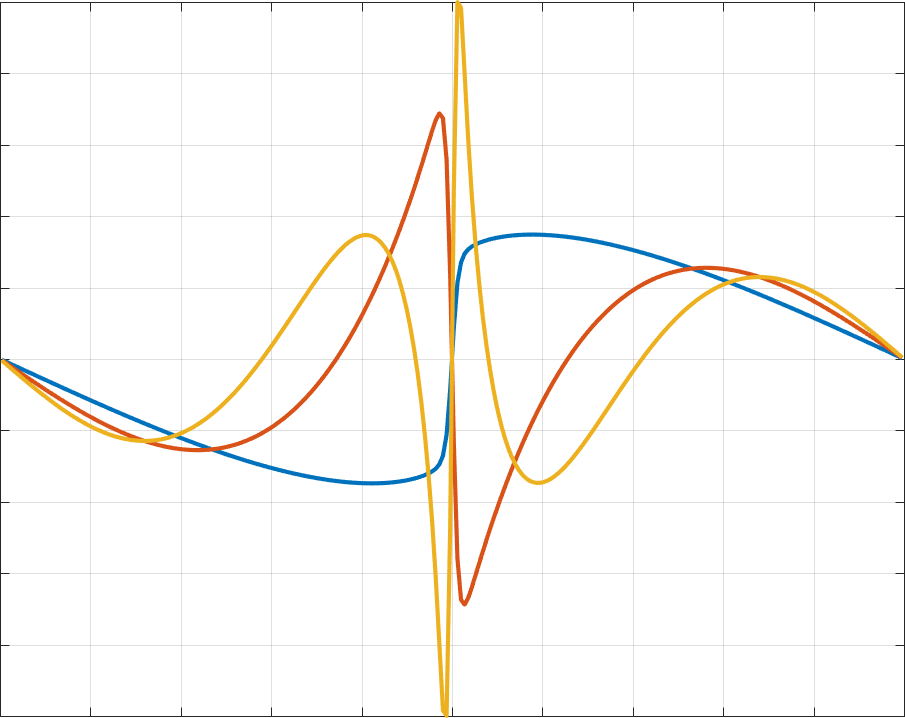}}
\hspace{.1em}
\subfloat[Mode derivatives $\bm{Q}\bm{V}$]{\includegraphics[width=.32\textwidth]{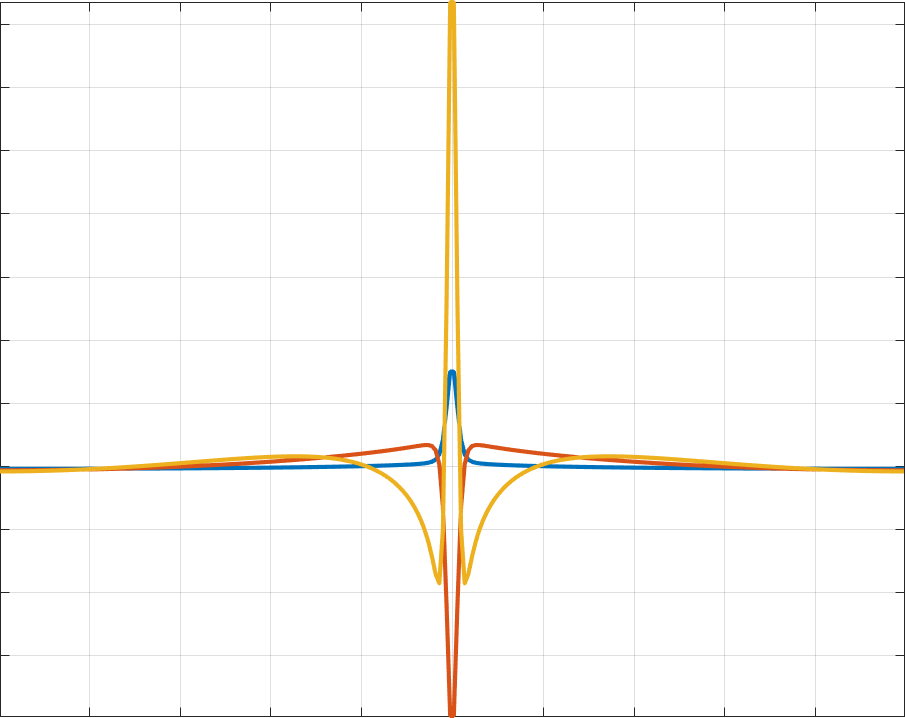}}
\caption{Solution snapshots, three POD modes, and mode derivatives for a shock solution of the periodic Burgers' equation. }
\label{fig:modeQ}
\end{figure}

This example implies that the reduced basis matrix $\bm{V}$ can do a poor job of sampling the range of $\bm{Q}\bm{V}$, and that the test basis should contain additional vectors.  To remedy this issue, we borrow techniques from least squares Petrov-Galerkin ROMs \cite{carlberg2011efficient, carlberg2017galerkin} and enrich the test basis $\bm{V}_t$ with vectors spanning the range of $\bm{Q}\bm{V}$.  The test basis now spans a space which contains $\bm{1}, \bm{V}$, and $\bm{Q}\bm{V}$, and thus has a dimension of at most $2N+1$.  The dimension of $\bm{V}_t$ may also be smaller if the intersection of the range of the reduced basis matrix $\bm{V}$ and the range of $\bm{Q}\bm{V}$ and $\bm{1}$ is non-empty.  

We briefly comment on why this choice of test basis restores accuracy. 
\bnote{The matrix $\bm{Q}_t$ should at minimum reproduce the action of $\bm{Q}$ on the reduced basis $\bm{V}$.  In the absence of hyper-reduction (where $\bm{P}_t = \bm{V}_t^{\dagger}$), we have that
\[
\bm{Q}_t \bm{V} = \LRp{\bm{V}_t\bm{V}_t^{\dagger}}^T\bm{Q}\bm{V}_t\bm{V}_t^{\dagger}\bm{V} = \bm{Q}\bm{V}, 
\]
where we have used that ${\bm{V}_t\bm{V}_t^{\dagger}}$ is a symmetric projector onto $\mathcal{R}(\bm{V}_t) \supset \mathcal{R}(\bm{V}) \cup \mathcal{R}(\bm{QV})$.
}  

\subsection{Hyper-reduction algorithm and target space}
\label{sec:hyperreducalgo}

In this work, we utilize the greedy algorithm for computing empirical cubature points and weights from \cite{an2008optimizing, hernandez2017dimensional}, which is described in Algorithm~\ref{alg:hr}.  
\rnote{Because we assume a fixed reduced basis in time, we compute a single set of empirical cubature points prior to the simulation and use those same points over the entire duration of the simulation.}
Let $\bm{V}_{\rm target}$ be some matrix whose columns span some target space of functions to be integrated, and let $\bm{w}_{\rm target}$ denote some reference target weights.  The algorithm approximates $\bm{V}_{\rm target}^T\bm{w}_{\rm target}$ by $\bm{V}_{\rm target}\LRp{\mathcal{I},:}^T\bm{w}$.  The index set is constructed in a greedy fashion by selecting the row index of $\bm{V}_{\rm target}$ which is most positively parallel to the residual, then computing the weight vector which minimizes the residual through a (non-negative) least squares solve.  The iteration terminates when the norm of the residual is smaller than some user-defined tolerance $tol$.  In this work, we utilize an $N$-mode POD basis, and $tol$ is determined by the singular values of the associated snapshot matrix.  Suppose there are $M$ total singular values of the snapshot matrix; then, we define $tol$ as
\[
tol = \sqrt{\frac{\sum_{j=N+1}^M \sigma_j^2}{\sum_{j=1}^M \sigma_j^2}}.
\]

\begin{algorithm}[!h]
\caption{Compute hyper-reduction s.t.\ $\bm{V}_{\rm target}^T\bm{w}_{\rm target} \approx \bm{V}_{\rm target}\LRp{\mathcal{I},:}^T\bm{w}$ (from \cite{hernandez2017dimensional}).}
\begin{algorithmic}[1]
\STATE \textbf{Input}: $\bm{V}_{\rm target}, \bm{w}_{\rm target}, tol$.
\STATE \textbf{Output}: $\mathcal{I}$, $\bm{w}$.
\STATE Set $\bm{b} = \bm{V}_{\rm target}^T\bm{w}_{\rm target}$, initialize residual $\bm{r} = \bm{b}$, $\mathcal{I} = \emptyset$.
\WHILE {$\nor{\bm{r}}/\nor{\bm{b}} > tol$}
        \STATE Select new index $i$ such that
        \[
        i = \argmin_i \tilde{\bm{V}}_{i}\bm{r}/\nor{\bm{r}}, \qquad
        \tilde{\bm{V}}_i = \bm{V}_{\rm target}(i,:) / \nor{\bm{V}_{\rm target}(i,:)}, \quad i \not\in \mathcal{I}.
        \]
        \STATE Add the new index to the set of hyper-reduced points $\mathcal{I} = \mathcal{I} \cup \LRc{i}$.  
        \STATE Compute $\bm{w}$ using linear least squares
        \[
        \bm{w} = \argmin_{\bm{c}} \frac{1}{2}\nor{\bm{V}_{\rm target}\LRp{\mathcal{I},:}^T\bm{c} - \bm{b}}^2.
        \]
        \IF {$\min \bm{w} \leq 0$}
                \STATE Recompute $\bm{w}$ using non-negative least squares 
                \[
                \argmin_{\bm{c} \geq 0} \frac{1}{2}\nor{\bm{V}_{\rm target}\LRp{\mathcal{I},:}^T\bm{c} - \bm{b}}^2.
                \]             
        \ENDIF
        \STATE Recompute $\bm{r} = \bm{b} - \bm{V}_{\rm target}\LRp{\mathcal{I},:}^T\bm{w}$.
\ENDWHILE
\end{algorithmic}
\label{alg:hr}
\end{algorithm}

Algorithm~\ref{alg:hr} requires both $\bm{V}_{\rm target}$ and $\bm{w}_{\rm target}$ as inputs. For all numerical experiments, we set $\bm{w}_{\rm target} = \Delta x \bm{1}$ and set $\bm{V}_{\rm target}$ to be the span of products of POD functions $\bm{V}$, such that
\begin{equation}
\mathcal{R}\LRp{\bm{V}_{\rm target}} = {\rm span}\LRc{\bm{V}(:,i) \circ \bm{V}(:,j), \quad i,j = 1,\ldots, N}.
\label{eq:Vtarget}
\end{equation}
This choice of target space ensures that the mass matrix $\Delta x \bm{V}^T\bm{V}$ is accurately approximated.  In practice, since many of the pointwise vector products $\bm{V}(:,i) \circ \bm{V}(:,j)$ are linearly dependent, we replace $\bm{V}_{\rm target}$ by a dimensionally reduced matrix \cite{hernandez2017dimensional}.  In this work, we compute this second reduction using another application of the SVD.  Let $\mu_{1}, \ldots, \mu_{N_t}$ denote the singular values of $\bm{V}_{\rm target}$, and define the energy residual of the first $i$ modes
\begin{equation}
E_i = \sqrt{\frac{\sum_{j=i+1}^{N_t} \mu_j^2}{\sum_{j=1}^{N_t} \mu_j^2}}.  
\label{eq:svdenergy}
\end{equation}
Note that $E_1 \geq E_2 \geq \ldots \geq E_{N_t}$.  We replace the matrix $\bm{V}_{\rm target}$ with its $k$ leading left singular vectors, where $k$ is the smallest index such that $E_{k} \leq tol$.  
%\note{Add more clarifications on additional SVD for target matrix: tol or machine precision for floor?  Use of relative SVD energy vs absolute.  In our numerical experiments, all SVD truncations are performed such that the relative SVD energy is less than the tolerance defined by the number of modes $N$.}

\subsection{Conditioning of the test mass matrix}
\label{sec:condtest}
We note that the choice of target space is designed to accurately approximate the mass matrix $\Delta x \bm{V}^T\bm{V}$, and does not take the test basis matrix $\bm{V}_{t}$ into account.  This choice is motivated by the empirical observation that incorporating the test basis by approximating either $\Delta x \bm{V}^T\bm{V}_{t}$ or $\Delta x \bm{V}_{t}^T\bm{V}_{t}$ produces a significantly larger number of hyper-reduced points without any significant improvement in accuracy.  
However, because the hyper-reduction strategy does not account for $\bm{V}_{t}$, it is possible to construct a hyper-reduced quadrature which accurately integrates the mass matrix but violates Assumption~\ref{ass:quad} by producing a singular test mass matrix $\bm{V}_t\LRp{\mathcal{I},:}^T\bm{W}\bm{V}_t\LRp{\mathcal{I},:}$.  
The challenge is then to construct a hyper-reduced volume quadrature which accurately approximates the reduced basis mass matrix $\Delta x \bm{V}^T\bm{V}$ while ensuring that the hyper-reduced test mass matrix is non-singular.  We address this challenge by computing the spectra of the test mass matrix  and adding additional ``stabilizing'' points if the condition number is larger than a specified tolerance.  

Let $\bm{z}_j$ denote the $N_z$ eigenvectors corresponding to small eigenvalues of the hyper-reduced test mass matrix.  The entries of $\bm{z}_j$ correspond to vectors of coefficients in the test basis $\bm{V}_t$.  Let $\bm{Z}$ denote the matrix whose columns are point values of each eigenvector
\[
\bm{Z} = \begin{bmatrix}
\bm{V}_t\bm{z}_1 & \ldots & \bm{V}_t\bm{z}_{N_z}
\end{bmatrix}.
\]
To ensure that the hyper-reduced test mass matrix is non-singular, we will employ a greedy heuristic based on adding additional points to approximate $\Delta x\bm{Z}^T\bm{Z}$.  Suppose that we have already computed hyper-reduced points and weights to approximate the reduced mass matrix $\Delta x\bm{V}^T\bm{V}$.   Let $\mathcal{I}_{\bm{V}}$ denote the index set of these stabilizing points.  We compute a second group of hyper-reduced points with index set $\mathcal{I}_{\bm{Z}}$  using the greedy hyper-reduction algorithm applied to the target space $\bm{Z}_{\rm target}$, which we define analogously to $\bm{V}_{\rm target}$ in (\ref{eq:Vtarget}).  We note that, unlike the hyper-reduced points computed previously, we do not perform a dimensionality reduction on $\bm{Z}_{\rm target}$ when computing this set of stabilizing points.

Given $\mathcal{I}_{\bm{Z}}$, we update the hyper-reduced index set as the union $\mathcal{I} = \mathcal{I}_{\bm{V}} \cup \mathcal{I}_{\bm{Z}}$, and compute a new set of hyper-reduced weights through the non-negative least squares solve
\[
\bm{w} = \argmin_{\bm{c} \geq 0} \frac{1}{2} \LRp{\nor{\bm{V}_{\rm target}\LRp{\mathcal{I},:}^T\bm{c} - \bm{b}}^2 + \alpha_Z\nor{\bm{Z}_{\rm target}\LRp{\mathcal{I},:}^T\bm{c} - \bm{d}}^2}, \qquad \bm{d} = \bm{Z}_{\rm target}^T\bm{w}_{\rm target},
\]  
where $\alpha_Z > 0$ is some scaling parameter which controls whether the weights $\bm{w}$ more accurately approximate the integration of the target space $\bm{V}_{\rm target}$ for the mass matrix, or $\bm{Z}_{\rm target}$ for the null space of the test mass matrix.  We set $\alpha_Z = 10^{-2}$ in all experiments, which is empirically observed to control the condition number of the test mass matrix without significantly impacting the accuracy to which the reduced mass matrix $\Delta x\bm{V}^T\bm{V}$ is approximated.  

For the numerical experiments in this paper, it was sufficient to add one single set of stabilizing hyper-reduced points.  However, this procedure can be repeated multiple times to further improve the condition number of the hyper-reduced test mass matrix.  

%Our approach involves three steps.  First, we select hyper-reduced points and weights to directly approximate $\Delta x\bm{V}^T\bm{V}$.  Next, we independently select another set of points and weights to approximate the null space of the hyper-reduced test mass matrix.  Finally, we combine these two point sets together and compute new weights through a non-negative least squares solve.  

% is non-singular and selecting additional points until $\bm{V}_t\LRp{\mathcal{I},:}$ is non-singular (using, for example, DEIM).  New weights associated with additional points are computed using the same non-negative least squares algorithm.  

%\note{Add new algo for new Vtest points.}

\subsection{An entropy conservative hyper-reduced model on periodic domains}

To summarize, an entropy conservative reduced order model on periodic domains requires the following offline steps:
\begin{enumerate}
\item Compute a reduced basis matrix $\bm{V}$ from snapshots of both conservative variables and entropy variables.  
\item \bnote{Compute a ``test'' basis matrix $\bm{V}_t$ such that $\mathcal{R}\LRp{\bm{V}_t} = \mathcal{R}\LRp{\LRs{\bm{1}, \bm{V},\bm{Q}\bm{V}}}$}, and use $\bm{V}_t$ to construct a ``modal'' differentiation matrix $\hat{\bm{Q}}_t = \bm{V}_t^T\bm{Q}\bm{V}_t$.  
\item Compute a hyper-reduced quadrature using Algorithm~\ref{alg:hr}, adding stabilizing points as necessary to ensure that the test mass matrix $\bm{V}_t\LRp{\mathcal{I},:}^T\bm{W}\bm{V}_t\LRp{\mathcal{I},:}$ is non-singular.  
\item Construct the hyper-reduced nodal differentiation matrix $\bm{Q}_t = \bm{P}_t^T\hat{\bm{Q}}_t\bm{P}_t$ using the projection matrix $\bm{P}_t = \LRp{\bm{V}_t\LRp{\mathcal{I},:}^T\bm{W}\bm{V}_t\LRp{\mathcal{I},:}}^{-1}\bm{V}_t\LRp{\mathcal{I},:}^T\bm{W}$ onto the test basis.
\end{enumerate}
Then, we have the following theorem:
\begin{theorem}
Suppose that the hyper-reduced quadrature satisfies Assumption~\ref{ass:quad}.  Then, the semi-discrete formulation 
\begin{gather}
\bm{M}_N\td{\bm{u}_N}{t} + 2\bm{V}\LRp{\mathcal{I},:}^T\LRp{\bm{Q}_t \circ \bm{F}}\bm{1} = \bm{0} \label{eq:hrrom}\\
\bm{M}_N = \bm{V}\LRp{\mathcal{I},:}^T\bm{W}\bm{V}\LRp{\mathcal{I},:}, \qquad \bm{P} = \bm{M}_N^{-1}\bm{V}\LRp{\mathcal{I},:}^T\bm{W},\nonumber\\ 
\bm{v}_N = \bm{P}\bm{v}\LRp{\bm{V}\LRp{\mathcal{I},:}\bm{u}_N}, \qquad \tilde{\bm{v}} = \bm{V}\LRp{\mathcal{I},:}\bm{v}_N, \nonumber\\
\tilde{\bm{u}} = \bm{u}\LRp{\tilde{\bm{v}}}, \qquad \bm{F}_{ij} = \bm{f}_S\LRp{\tilde{\bm{u}}_i,\tilde{\bm{u}}_j},\nonumber
\end{gather}
semi-discretely conserves the sampled and weighted average entropy  
\[
\bm{1}^T\bm{W}\td{S\LRp{\bm{V}\LRp{\mathcal{I},:}\bm{u}_N}}{t} = 0.  
\]
Additionally, if the reduced basis $\bm{V}$ exactly represents $\bm{1}$, solutions of (\ref{eq:hrrom}) conserve global averages of the conservative variables.
\label{thm:hrromes}
\end{theorem}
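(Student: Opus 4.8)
The plan is to imitate the proofs of Theorems~\ref{thm:ecfom} and~\ref{thm:esrom}, with $\bm{Q}$ replaced everywhere by the hyper-reduced nodal operator $\bm{Q}_t$ and with the pointwise sums now weighted by $\bm{W}$. The engine is Lemma~\ref{thm:Qt}, which supplies exactly the two structural properties of $\bm{Q}$ that drive the entropy argument: $\bm{Q}_t$ is skew-symmetric and $\bm{Q}_t\bm{1}=\bm{0}$. First I would test (\ref{eq:hrrom}) with $\bm{v}_N$ and treat the two resulting terms separately.

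For the time-derivative term I would exploit the structure of the hyper-reduced projection. Writing $\bm{P}=\bm{M}_N^{-1}\bm{V}\LRp{\mathcal{I},:}^T\bm{W}$ and using symmetry of $\bm{M}_N$ and $\bm{W}$ gives the telescoping identity $\bm{P}^T\bm{M}_N = \bm{W}\bm{V}\LRp{\mathcal{I},:}$, so that since $\bm{v}_N=\bm{P}\bm{v}\LRp{\bm{V}\LRp{\mathcal{I},:}\bm{u}_N}$,
\[
\bm{v}_N^T\bm{M}_N\td{\bm{u}_N}{t} = \bm{v}\LRp{\bm{V}\LRp{\mathcal{I},:}\bm{u}_N}^T\bm{W}\bm{V}\LRp{\mathcal{I},:}\td{\bm{u}_N}{t}.
\]
Applying the chain rule termwise over the sampled points, exactly as in (\ref{eq:esromtime}), this collapses to $\bm{1}^T\bm{W}\td{S(\bm{V}\LRp{\mathcal{I},:}\bm{u}_N)}{t}$, the quantity we wish to conserve.

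For the flux term I would set $\tilde{\bm{v}}=\bm{V}\LRp{\mathcal{I},:}\bm{v}_N$, so that $\bm{v}_N^T\bm{V}\LRp{\mathcal{I},:}^T\LRp{\bm{Q}_t\circ\bm{F}}\bm{1} = \tilde{\bm{v}}^T\LRp{\bm{Q}_t\circ\bm{F}}\bm{1}$, and then rerun the manipulations of Theorem~\ref{thm:ecfom} with $\bm{Q}_t$ in place of $\bm{Q}$: skew-symmetry of $\bm{Q}_t$ together with symmetry of $\bm{F}$ exposes the difference $\tilde{\bm{v}}_i-\tilde{\bm{v}}_j$, and the entropy conservation condition in (\ref{eq:esflux}) converts this into $\psi(\tilde{\bm{u}}_i)-\psi(\tilde{\bm{u}}_j)$. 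The one point that must be checked carefully — and which is the real content of the entropy-projection construction — is that invoking (\ref{eq:esflux}) is legitimate: this requires $\tilde{\bm{v}}_i=\bm{v}(\tilde{\bm{u}}_i)$, which holds precisely because the flux is evaluated at the entropy-projected conservative variables $\tilde{\bm{u}}=\bm{u}(\tilde{\bm{v}})$, so that $\bm{v}$ and $\bm{u}$ are mutual inverses at each sampled block. Finally, $\bm{Q}_t\bm{1}=\bm{0}$ and $\bm{Q}_t^T\bm{1}=-\bm{Q}_t\bm{1}=\bm{0}$ (both from Lemma~\ref{thm:Qt}) annihilate the remaining sums $\sum_{ij}(\bm{Q}_t)_{ij}\psi(\tilde{\bm{u}}_i)$ and $\sum_{ij}(\bm{Q}_t)_{ij}\psi(\tilde{\bm{u}}_j)$, so the flux term vanishes and the first claim follows.

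For global conservation, assuming $\bm{V}\bm{e}=\bm{1}$ so that $\bm{V}\LRp{\mathcal{I},:}\bm{e}=\bm{1}$ on the sampled rows, I would test (\ref{eq:hrrom}) with $\bm{e}$ rather than $\bm{v}_N$, mirroring the last step of Theorem~\ref{thm:esrom}. The time term becomes $\bm{1}^T\bm{W}\td{\LRp{\bm{V}\LRp{\mathcal{I},:}\bm{u}_N}}{t}$, while the flux term becomes $2\bm{1}^T\LRp{\bm{Q}_t\circ\bm{F}}\bm{1}$, which vanishes because $\bm{Q}_t\circ\bm{F}$ is skew-symmetric (skew-symmetric $\bm{Q}_t$ against symmetric $\bm{F}$). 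I expect the main obstacle to be conceptual rather than computational, and confined to the flux term: once Lemma~\ref{thm:Qt} guarantees the two properties of $\bm{Q}_t$ and the entropy projection secures $\tilde{\bm{v}}_i=\bm{v}(\tilde{\bm{u}}_i)$, the remainder is a weighted, hyper-reduced transcription of the earlier proofs, and the only care needed is tracking the $\bm{W}$ factors and verifying the identity $\bm{P}^T\bm{M}_N=\bm{W}\bm{V}\LRp{\mathcal{I},:}$.
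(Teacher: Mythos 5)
Your proposal is correct and takes essentially the same route as the paper: you test with $\bm{v}_N$, use the identity $\bm{P}^T\bm{M}_N = \bm{W}\bm{V}\LRp{\mathcal{I},:}$ and the temporal chain rule to produce $\bm{1}^T\bm{W}\td{S\LRp{\bm{V}\LRp{\mathcal{I},:}\bm{u}_N}}{t}$, and then dispatch the flux term by rerunning the argument of Theorems~\ref{thm:ecfom} and~\ref{thm:esrom} with the skew-symmetry and zero-row-sum properties of $\bm{Q}_t$ supplied by Lemma~\ref{thm:Qt}, exactly as the paper does. The only difference is that you write out in full the steps (including the entropy-projection point $\tilde{\bm{v}}_i = \bm{v}(\tilde{\bm{u}}_i)$ and the global-conservation test with $\bm{e}$) that the paper handles by citing the earlier proofs, which is consistent with its intent.
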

\begin{proof}
Testing the time derivative with $\bm{v}_N$ yields
\begin{gather*}
\bm{v}_N^T\bm{M}_N\td{\bm{u}_N}{t} = \bm{v}\LRp{\bm{V}\LRp{\mathcal{I},:}\bm{u}_h}^T\bm{P}^T\bm{M}_N\td{\bm{u}_N}{t} = \bm{v}\LRp{\bm{V}\LRp{\mathcal{I},:}\bm{u}_h}^T \bm{W}\bm{V}\LRp{\mathcal{I},:}\bm{M}_N^{-1}\bm{M}_N\td{\bm{u}_N}{t}
\\
= \bm{v}\LRp{\bm{V}\LRp{\mathcal{I},:}\bm{u}_h}^T \bm{W} \bm{V}\LRp{\mathcal{I},:}\td{\bm{u}_N}{t} = \bm{v}\LRp{\bm{V}\LRp{\mathcal{I},:}\bm{u}_h}^T \bm{W} \td{\LRp{\bm{V}\LRp{\mathcal{I},:}\bm{u}_N}}{t} = \bm{1}^T\bm{W}\td{S\LRp{\bm{V}\LRp{\mathcal{I},:}\bm{u}_N}}{t},
\end{gather*}
where we have used that $\bm{W}$ is diagonal and the temporal chain rule in the final step.  
Since the hyper-reduction preserves the skew-symmetry and zero row sums of $\bm{Q}_t$, the remainder of the proof consists of showing that $\bm{v}_N^T\bm{V}\LRp{\mathcal{I},:}^T\LRp{\bm{Q}_t \circ \bm{F}}\bm{1} = 0$, and is identical in structure to the proof of Theorem~\ref{thm:esrom}.  
\end{proof}

\begin{remark}
It is not strictly necessary to use the hyper-reduced mass matrix 
\[
\bm{M}_N = \bm{V}\LRp{\mathcal{I},:}^T\bm{W}\bm{V}\LRp{\mathcal{I},:}
\]
 to construct an entropy conservative ROM.  For example, we can use the exact mass matrix $\bm{M}_N = \Delta x \bm{V}^T\bm{V}$ rather than the hyper-reduced mass matrix.  If the projection matrix $\bm{P}$ is defined accordingly, the resulting ROM remains entropy conservative.  However, we observe significantly larger errors when using the exact mass matrix instead of the hyper-reduced mass matrix.  
\end{remark}

\subsection{Online cost estimates for the hyper-reduced system}
\label{sec:cost}
In practice, the flux matrix $\bm{F}$ is not computed explicitly, but is instead formed on the fly by evaluating the entropy stable flux $\bm{f}_S$.  Due to the complexity of the formulas for $\bm{f}_S$, the computation of $\bm{F}$ constitutes a significant portion of the computational cost.  For the full order model, this cost is typically ameliorated by exploiting the sparsity of $\bm{Q}$.  However, since the hyper-reduced matrix $\bm{Q}_t$ is fully dense, the nonlinear convective term $\bm{Q}_{\bnote{t}}\circ\bm{F}$ in (\ref{eq:hrrom}) requires $O(N_s^2)$ nonlinear flux evaluations.  In contrast, typical hyper-reduction approaches target a nonlinear \textit{vector}, which requires only $O(N_s)$ nonlinear function evaluations.  We emphasize that this $O(N_s^2)$ online cost is specific only to the convective nonlinearity, and not to the hyper-reduced treatment of other nonlinear terms such as viscosity.

While the $O(N_s^2)$ nonlinear flux evaluations are trivially parallelizable, they constitute a significant increase in online costs compared to traditional hyper-reduction procedures.  However, we note that the cost can be comparable to techniques for exactly treating higher order polynomial nonlinearities for a large number of modes \cite{maboudi2018conservative}.  For example, suppose one wishes to compute $\bm{V}^T g\LRp{\bm{V}\bm{u}_N}$, where $g(x) = x^m$ for $m \geq 1$ and $m$ integer.  We can rewrite this as
\[
\bm{V}^T g(\bm{u}) = \bm{V}^T \diag{\bm{V}\bm{u}_N}^{m-1} \bm{V}\bm{u}_N.
\]
Suppose first that $m = 2$, such that the nonlinearity is quadratic.  Define $\bm{V}_j = \diag{\bm{V}\LRp{:,j}}$ as the diagonal matrix with the $j$th column of $\bm{V}$ on the diagonal.  Then, $\diag{\bm{V}\bm{u}_N} = \sum_{j=1}^N\bm{V}_j (\bm{u}_N)_j$, and 
\[
\bm{V}^T g\LRp{\bm{V}\bm{u}_N} = \sum_{j=1}^N (\bm{u}_N)_j \bm{V}^T \bm{V}_j \bm{V}\bm{u}_N.
\]
The matrices $\bm{V}^T \bm{V}_j \bm{V}$ can be precomputed, such that the cost of evaluating the above expression is $N$ matrix-vector multiplications, or $O(N^3)$ operations.  For $m = 3$, repeating the same steps yields
\[
\bm{V}^T g\LRp{\bm{V}\bm{u}_N} = \sum_{j=1}^N \sum_{k=1}^M (\bm{u}_N)_j  (\bm{u}_N)_k \bm{V}^T \bm{V}_j \bm{V}_k \bm{V}\bm{u}_N.
\]
which requires $N^2$ matrix-vector multiplications, or $O(N^4)$ operations.  Generalizing this procedure to $m > 3$ yields that a degree $m$ polynomial nonlinearity can be evaluated in $O(N^{m+1})$ operations.\footnote{\bnote{Because entropy conservative fluxes for the compressible Euler equations include non-polynomial rational and logarithmic terms, it is not possible to apply this exact treatment of polynomial nonlinearities to general entropy conservative discretizations. }}   

Experimental results suggest that the number of hyper-reduced points $N_s$ scales as $O(N)$, though the constant increases with the spatial dimension (numerical experiments suggest the constant is approximately 2 in 1D and around 10 in 2D).  Define constants $\alpha = N_s/ N$ and $\beta$, where $\beta$ denotes the number of operations required to evaluate the flux $\bm{f}_S$.  We note that different choices of entropy stable flux functions (see, for example, \cite{ismail2009affordable, chandrashekar2013kinetic}) produce significantly different values of $\beta$.  Then, the hyper-reduced treatment of the convective nonlinearity requires $N_s^2$ flux evaluations.  Assuming flux evaluations dominate costs, entropy stable schemes require $\alpha^2 \beta N^2$ operations.  Thus, rather than increasing the asymptotic computational complexity, the entropy stable hyper-reduction presented here increases the associated constant.  

%\note{Compare to pre-computed matrices.  For a nonlinearity $u^m$, the cost is $O(N^{m-1})$ $N\times N$ matrix multiplies, which scales as $O(N^{m+1})$.  Assuming that the number of hyper-reduced points $N_s = O(N)$, then the cost scaling of $O(N_s^2) = O(N^2)$, where the constants depend on the ratio of $N_s/N$ and probably dimension. }

\subsection{Hyper-reduction and entropy dissipation}
\label{sec:diss}

Given an entropy conservative ROM with hyper-reduction, we can construct an entropy stable ROM by adding appropriate entropy-dissipative viscosity terms.  Recall that the artificial diffusion matrix $\bm{K}$ in (\ref{eq:esvisc}) does not depend nonlinearly on $\bm{u}$.  Thus, typical model reduction techniques compress such operators using Galerkin projection, e.g., $\bm{V}^T\bm{K}\bm{V}$.  However, since one cannot in general show that this operator dicretely dissipates entropy, we discuss two treatments of the viscous terms which are provably entropy dissipative.  

This section introduces hyper-reduction procedures for $\bm{K}$ which provide a provable discrete dissipation of entropy.  We seek to mimic the entropy balance of the full order model in (\ref{eq:entropybalancefom}).  Suppose the reduced order model is given by 
\[
\bm{M}_N\td{\bm{u}_N}{t} + 2\bm{V}\LRp{\mathcal{I},:}^T\LRp{\bm{Q}_t \circ \bm{F}}\bm{1} + \bnote{\epsilon}\bm{d}(\bm{u}_N)= \bm{0}
\]
where $\bm{d}(\bm{u})$ is some hyper-reduced approximation to the viscous terms.  Theorem~\ref{thm:hrromes} gives that the entropy balance of the reduced order model is
\[
\Delta x \bm{1}^T\td{S\LRp{\bm{V}\bm{u}_N}}{t} = -\bnote{\epsilon}\bm{v}_N^T\bm{d}(\bm{u}_N), \qquad \bm{v}_N = \bm{P}\bm{v}\LRp{\bm{V}\LRp{\mathcal{I},:}\bm{u}_N}.
\]
We will design hyper-reduced treatments of the viscous terms such that $\bm{v}_N^T\bm{d}(\bm{u}_N) \geq 0$, which implies that the reduced model is entropy stable.  

Let $\bm{D}$ denote the $(K-1)\times K$ difference matrix
\[
\bm{D} = \frac{1}{\Delta x}\begin{bmatrix}
1 & -1 &&& \\
 & 1 & -1 && \\
 & & \ddots & \ddots&\\
&  & & 1 & -1
\end{bmatrix}.
\]
The artificial diffusion matrix in (\ref{eq:esvisc}) can be decomposed as $\bm{K} = \bm{D}^T\bm{D}$.  To construct hyper-reduced diffusion terms, we first compute (using the SVD) an auxiliary basis $\bm{V}_{\bm{D}}$ for $\bm{D}\bm{V}$, where $\bm{V}$ is the reduced basis matrix.  We then perform hyper-reduction to compute weights and sampled row indices such that 
\[
\bm{V}_{\bm{D}}^T\bm{V}_{\bm{D}} \approx \bm{V}_{\bm{D}}\LRp{\mathcal{I}_{\bm{D}},:}^T \bm{W}_{\bm{D}}\bm{V}_{\bm{D}}\LRp{\mathcal{I}_{\bm{D}},:}.
\]

We can now construct entropy-dissipative diffusion terms using this second hyper-reduction of $\bm{D}$.  The first treatment is relatively straightforward, and simply samples rows of $\bm{D}$ corresponding to indices $\mathcal{I}_{\bm{D}}$.  The viscous terms $\bm{K}\bm{u}$ in the full order model are approximated using the sparse hyper-reduced matrix $\bm{D}\LRp{\mathcal{I}_{\bm{D}},:}$
\begin{equation}
%\bm{K}\bm{u} \approx 
\bm{d}(\bm{u}_N) =\bnote{\bm{V}^T} \bm{D}\LRp{\mathcal{I}_{\bm{D}},:}^T \bm{W}_{\bm{D}} \bm{D}\LRp{\mathcal{I}_{\bm{D}},:}\tilde{\bm{u}}.
\label{eq:visc1}
\end{equation}
Here, the entropy-projected conservative variables $\tilde{\bm{u}}$ are mappings of grid values of the projected entropy variables  $\bm{V}\bm{v}_N$.  Repeating the steps used to derive (\ref{eq:entropydissfom}) yields that the entropy dissipation of (\ref{eq:visc1}) is 
\begin{align*}
\bm{v}_N^T\bm{V}^T\bm{D}^T\LRp{\mathcal{I}_{\bm{D}},:}^T \bm{W}_{\bm{D}} \bm{D}\LRp{\mathcal{I},:}\tilde{\bm{u}} &= 
\tilde{\bm{v}}^T\bm{D}^T\LRp{\mathcal{I}_{\bm{D}},:}^T \bm{W}_{\bm{D}} \bm{D}\LRp{\mathcal{I},:}\bm{u}\LRp{\tilde{\bm{v}}}
\\
&= \sum_{i\in \mathcal{I}_{\bm{D}}} \frac{w_{\bm{D},i}}{\Delta x^2} \LRl{\pd{\bm{u}}{\bm{v}}}_{i,i+1} \LRp{\tilde{\bm{v}}_i - \tilde{\bm{v}}_{i+1}}^2.
\end{align*}

The second treatment of the diffusion terms is motivated by approaches taken in \cite{carpenter2014entropy, chen2017entropy, zakerzadeh2017entropy, upperman2019entropy}.  Recall that, using the mean value theorem, the $i$th entry of $\bm{D}\bm{u}$ is
\begin{equation}
\LRp{\bm{D}\bm{u}}_i = \bm{u}_i - \bm{u}_{i+1} = \LRl{\pd{\bm{u}}{\bm{v}}}_{i,i+1} \LRp{\bm{v}(\bm{u}_i)-\bm{v}(\bm{u}_{i+1})} = \LRl{\pd{\bm{u}}{\bm{v}}}_{i,i+1}  \LRp{\bm{D}\bm{v}(\bm{u})}_i,
\label{eq:meanval}
\end{equation}
where $\LRl{\pd{\bm{u}}{\bm{v}}}_{i,i+1}$ is $\pd{\bm{u}}{\bm{v}}$ evaluated at some intermediate state between $\bm{u}_i$ and $\bm{u}_{i+1}$.  This motivates a hyper-reduction based on sampling and weighting rows of $\bm{D}$ applied to the vector of entropy variables $\bm{v}(\bm{u})$.  The diffusion terms can be approximated by 
\begin{equation}
%\bm{K}\bm{u} \approx 
\bm{d}(\bm{u}_N) = \bm{V}^T\bm{D}\LRp{\mathcal{I}_{\bm{D}},:}^T \bm{W}_{\bm{D}} \bm{H} \bm{D}\LRp{\mathcal{I}_{\bm{D}},:}\bm{V} \bm{v}_N
\label{eq:visc2}
\end{equation}
where $\bm{v}_N$ are coefficients of the projected entropy variables in (\ref{eq:hrrom}), $\bm{W}_{\bm{D}}$ is a diagonal matrix of positive weights $w_{\bm{D},i}$, and $\bm{H}$ is a diagonal matrix containing entries of the Jacobian $\pd{\bm{u}}{\bm{v}}$\footnote{Recall that matrices are treated as Kronecker products, and that the diagonal entries of $\bm{H}_{ii}$ correspond to block matrices acting on the vector of solution components at a point.} evaluated at all hyper-reduced points in $\mathcal{I}_{\bm{D}}$ 
\[
\bm{H}_{ii} = \LRl{\pd{\bm{u}}{\bm{v}}}_{\bm{u} = \bar{\bm{u}}_i}, \qquad \bar{\bm{u}}_i = \frac{\tilde{\bm{u}}_i + \tilde{\bm{u}}_{i+1}}{2}.
\]
Since we do not know the intermediate state used to evaluate $\LRl{\pd{\bm{u}}{\bm{v}}}_{i,i+1}$ in (\ref{eq:meanval}), 
we have arbitrarily chosen to evaluate $\pd{\bm{u}}{\bm{v}}$ at the average of the entropy-projected conservative values $\tilde{\bm{u}}_i$ and $\tilde{\bm{u}}_{i+1}$.  Since the diagonal entries of $\bm{W}_{\bm{D}}$ are positive and $\pd{\bm{u}}{\bm{v}}$ is symmetric positive definite (for physically relevant values of $\tilde{\bm{u}}$), the entropy dissipation of the hyper-reduced diffusion term is given by
\[
\bm{v}_N^T\bm{V}^T\bm{D}^T\LRp{\mathcal{I}_{\bm{D}},:}^T \bm{W}_{\bm{D}} \bm{H} \bm{D}\LRp{\mathcal{I},:}\bm{V} \bm{v}_N = 
\sum_{i\in \mathcal{I}_{\bm{D}}} \frac{w_{\bm{D},i}}{\Delta x^2} \LRl{\pd{\bm{u}}{\bm{v}}}_{\bm{u}=\bar{\bm{u}}_i} \LRp{\tilde{\bm{v}}_i - \tilde{\bm{v}}_{i+1}}^2.
\]
This implies that the dissipation of entropy produced by the hyper-reduced approximation of the viscous terms (\ref{eq:visc2}) also mimics the dissipation of entropy (\ref{eq:entropydissfom}) derived for the full order model.
The difference between the hyper-reduction treatments of viscosity (\ref{eq:visc1}) and (\ref{eq:visc2}) are that (\ref{eq:visc1}) relies explicitly on the fact that $\bm{D}$ is a two-point finite volume difference matrix, which enables the use of the mean value theorem in deriving a statement of entropy dissipation.   The first approach (\ref{eq:visc1}) results in a simpler online phase, though it involves larger matrices compared to the second approach.  The second approach (\ref{eq:visc2}) is not limited to finite volume methods, and is entropy dissipative for any choice of differentiation matrix $\bm{D}$.  Both approaches require computing an additional set of hyper-reduced points during the offline phase.  

\begin{remark}
The hyper-reduction treatments of viscosity described in this section can also be applied to nonlinear interface dissipation models, such as local Lax-Friedrichs penalization.  We do not focus on such approximations for this paper, as the amount of dissipation applied is typically proportional to $\Delta x$ and thus changes based on the dimension of the full order model.
\end{remark}

We note that, in practice, a naive approximation of diffusive terms $\bm{K}\bm{u}$ by
\begin{equation}
%\bm{K}\bm{u} \approx 
\bm{d}(\bm{u}_N) = \bm{V}^T\bm{K}\bm{V}\bm{P}\tilde{\bm{u}},
\label{eq:visc3}
\end{equation}
is often effective.  This approach is simpler to implement than (\ref{eq:visc1}), (\ref{eq:visc2}), and does not require a second set of hyper-reduced points.  We are unable to show that this treatment is provably entropy dissipative.  However, in all experiments, we observe that $\bm{v}_N^T\bm{V}^T\bm{K}\bm{V}\bm{P}\tilde{\bm{u}} \geq 0$, which implies that entropy is discretely dissipated.

\section{Weak imposition of boundary conditions}
\label{sec:6}
Until now, we have assumed periodic domains.  In this section, we discuss how to extend the construction of entropy stable ROMs to the non-periodic case.  Boundary conditions are weakly imposed through a numerical flux and a ``hybridized'' summation by parts operator \cite{chan2017discretely}.  

In the non-periodic case, the matrix $\bm{Q}$ is nearly skew-symmetric, such that it satisfies a summation by parts property
\begin{equation}
\bm{Q} = \frac{1}{2}\begin{bmatrix}
-1 & 1 & & \\
-1 & 0 & \ddots &\\
 &\ddots &\ddots &1\\
  & & -1 &1\\
\end{bmatrix}, \qquad \bm{Q} + \bm{Q}^T = \bm{B}_{\rm SBP} = 
\begin{bmatrix}
-1& && \\
&0 &&\\
& &\ddots &\\
& &&1\\
\end{bmatrix}
\label{eq:fomsbp}
\end{equation}
A non-periodic full order model can be expressed as 
\begin{align}
\Delta x \td{\bm{u}_h}{t} + 2\LRp{{\bm{Q}}\circ \bm{F}}\bm{1} + \bm{B}_{\rm SBP}\LRp{\bm{f}^* - \bm{f}(\bm{u})} = \bm{0},
\label{eq:esbc}
\end{align}
where $\bm{f}^* = \LRs{\bm{f}^*_0, 0,\ldots, 0, \bm{f}^*_K}^T$ is a vector containing values of the nonlinear flux at the boundary points.  
Following \cite{chen2017entropy}, a boundary numerical flux $\bm{f}^*$ is entropy stable if 
\begin{equation}
\psi\LRp{\bm{u}} - \bm{v}(\bm{u})^T\bm{f}^* \leq \bm{0}.%, \qquad \text{check this!}}
\label{eq:esbflux}
\end{equation}
The flux is referred to as entropy conservative if the inequality in (\ref{eq:esbflux}) is an equality.  If an entropy stable or entropy conservative flux $\bm{f}_S(\bm{u}_L,\bm{u}_R)$ is used to evaluate both the volume and surface flux, then the resulting full order model given by (\ref{eq:esbc}) is also entropy stable or entropy conservative \cite{tadmor1987numerical, parsani2015entropy, chen2017entropy}.  

Extending entropy conservative treatments of boundary conditions to the reduced order model is less straightforward due to fact that reduced matrices do not satisfy the same SBP property (\ref{eq:fomsbp}).  If the full order matrix $\bm{Q}$ satisfies the SBP property, then the hyper-reduced matrix $\bm{Q}_t$ satisfies a \textit{generalized} SBP property \cite{fernandez2014generalized} 
\[
\bm{Q}_t + \bm{Q}_t^T = \bm{E}^T\bm{B}\bm{E}, \qquad \bm{Q}_t = \bm{P}_t^T\bm{V}_t^T\bm{Q}\bm{V}_t\bm{P}_t.
\]
where the matrices $\bm{B}$ and $\bm{E}$ encode scaling by normals and evaluation at boundary points.  In 1D, these matrices are given by
\[
\bm{B} = \begin{bmatrix}-1 & \\ & 1\end{bmatrix}, \qquad \bm{E} = \bm{V}_{bt}\bm{P}_t, \qquad \bm{V}_{bt} = \begin{bmatrix}\bm{V}_t(1,:) \\ \bm{V}_t(N,:)\end{bmatrix}.
\]
where $\bm{V}_{bt}$ interpolates test basis functions to boundary points, and $\bm{V}_t(:,1), \bm{V}_t(:,K)$ denote the first and last columns (corresponding to points on the boundary) of the test basis matrix $\bm{V}_t$.  

For linear problems, it is possible to impose energy stable boundary conditions by combining generalized SBP operators with appropriate simultaneous approximation terms (SATs) \cite{fernandez2018simultaneous}.  However, due to the presence of $\bm{E}$ within the boundary term $\bm{E}^T\bm{B}\bm{E}$, the accurate and entropy stable imposition of nonlinear boundary conditions using generalized SBP-SATs remains an open problem \cite{crean2018entropy, chan2018efficient, chenreview}. We address this issue by constructing a \textit{hybridized} SBP operator \cite{chan2017discretely, chan2019skew} (also referred to as a decoupled SBP operator in \cite{chan2017discretely})
\[
\bm{Q}_{h} = \frac{1}{2}\begin{bmatrix}
\bm{Q}_t - \bm{Q}_t^T & \bm{E}^T\bm{B} \\
-\bm{B}\bm{E} & \bm{B}
\end{bmatrix}.
\]
Note that $\bm{Q}_h$ satisfies a block version of the standard summation by parts property by construction
\[
\bm{Q}_h + \bm{Q}_h^T = \bm{B}_h = \begin{bmatrix}
\bm{0} & \\
& \bm{B} \end{bmatrix},  
\]
where $\bm{B}_h$ is diagonal and does not involve the boundary evaluation matrix $\bm{E}$.  

It is less straightforward to show that $\bm{Q}_h\bm{1} = \bm{0}$, which is also necessary for the proof of entropy stability \cite{chan2019skew}.  Note that the full order matrix $\bm{Q}$ in (\ref{eq:fomsbp}) satisfies $\bm{Q}\bm{1} = \bm{0}$.  Then, since $\bm{1}$ is exactly representable in the test basis $\bm{V}_t$, $\bm{V}_t\bm{P}_t\bm{1} = \bm{1}$, and
\[
\bm{Q}_t = \bm{P}_t^T\bm{V}_t^T\bm{Q}\bm{V}_t\bm{P}_t\bm{1} = \bm{P}_t^T\bm{V}_t^T\bm{Q}\bm{1} = \bm{0}.
\]
This implies that $\bm{Q}_h\bm{1}$ simplifies to
\[
\bm{Q}_h \bm{1} =  \frac{1}{2}\begin{bmatrix}
\bm{Q}_t \bm{1} - \bm{Q}_t^T \bm{1} + \bm{E}^T\bm{B} \bm{1} \\
- \bm{B}\bm{E} \bm{1} + \bm{B}
\end{bmatrix}=\frac{1}{2}\begin{bmatrix}
- \bm{Q}_t^T \bm{1} + \bm{E}^T\bm{B} \bm{1} \\
\bm{0}
\end{bmatrix},
\]
where we have used that $\bm{Q}_t\bm{1} = \bm{0}$ and that (\ref{eq:preproduce}) implies $\bm{E}\bm{1} = \bm{1}$.   Here, we abuse notation by letting $\bm{1}$ denote the vector of ones with appropriate dimension.  Furthermore, since $\bm{Q}_t$ satisfies a generalized SBP property,
\[
- \bm{Q}_t^T \bm{1} + \bm{E}^T\bm{B} \bm{1} = \LRp{-\bm{Q}_t^T + \bm{E}^T\bm{B}\bm{E}} \bm{1} = \bm{Q}_t \bm{1} = \bm{0}.
\]
we conclude that $\bm{Q}_h\bm{1} = \bm{0}$.

By replacing $\bm{Q}_t$ in (\ref{eq:hrrom}) with $\bm{Q}_h$, we can impose boundary conditions weakly through a numerical flux $\bm{f}^*$.  We define $\bm{V}_b$ as the matrix which evaluates the reduced basis at boundary points $\LRc{-1,1}$, and define $\bm{V}_h$ as the matrix which evaluates the reduced basis at both boundary points and hyper-reduced volume points
\[
\bm{V}_{b} = \begin{bmatrix}\bm{V}(1,:) \\ \bm{V}(N,:)\end{bmatrix}, \qquad \bm{V}_h = \begin{bmatrix}
\bm{V}\LRp{\mathcal{I},:}\\
\bm{V}_{b}
\end{bmatrix}.
\]
We then have the following theorem:
\begin{theorem}
If the boundary flux $\bm{f}^*$ is entropy stable in the sense of (\ref{eq:esbflux}), then the hyper-reduced ROM 
\begin{gather}
\bm{M}_N\td{\bm{u}_N}{t} + 2\bm{V}_h^T\LRp{\bm{Q}_h \circ \bm{F}}\bm{1} + \bm{V}_{b}^T\bm{B}\LRp{\bm{f}^* - \bm{f}(\tilde{\bm{u}}_b)} = \bm{0} \label{eq:esbchr}\\
\bm{v}_N = \bm{P}\bm{v}\LRp{\bm{V}\LRp{\mathcal{I},:}\bm{u}_N}, \qquad \tilde{\bm{v}} = 
\bm{V}_h
\bm{v}_N, \qquad \tilde{\bm{v}}_b = \bm{V}_b\bm{v}_N, \nonumber\\
\tilde{\bm{u}} = \bm{u}\LRp{\tilde{\bm{v}}}, \qquad \bm{F}_{ij} = \bm{f}_S\LRp{\tilde{\bm{u}}_i,\tilde{\bm{u}}_j}, \qquad  \tilde{\bm{u}}_b = \bm{u}\LRp{\tilde{\bm{v}}_b}\nonumber
\end{gather}
is also entropy stable such that
\[
\bm{1}^T\bm{W}\td{S\LRp{\bm{V}\bm{u}_N}}{t} \leq 0,
\]
with equality holding for an entropy conservative flux.
\label{thm:esbchr}
\end{theorem}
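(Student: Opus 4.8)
The plan is to follow the proof of Theorem~\ref{thm:hrromes} almost verbatim, testing the semi-discrete system (\ref{eq:esbchr}) with the coefficient vector $\bm{v}_N$ of the projected entropy variables, and then to account for the two features that are new to the non-periodic setting: the hybridized operator $\bm{Q}_h$ in place of $\bm{Q}_t$, and the boundary penalty term $\bm{V}_b^T\bm{B}\LRp{\bm{f}^* - \bm{f}(\tilde{\bm{u}}_b)}$. The time-derivative term needs no change: testing $\bm{M}_N\td{\bm{u}_N}{t}$ with $\bm{v}_N$ and using $\bm{v}_N = \bm{P}\bm{v}\LRp{\bm{V}\LRp{\mathcal{I},:}\bm{u}_N}$ together with $\bm{P}^T\bm{M}_N = \bm{W}\bm{V}\LRp{\mathcal{I},:}$ collapses it, exactly as before, to the sampled entropy rate $\bm{1}^T\bm{W}\td{S\LRp{\bm{V}\LRp{\mathcal{I},:}\bm{u}_N}}{t}$.

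For the volume flux term I would set $\tilde{\bm{v}} = \bm{V}_h\bm{v}_N$, so that testing produces $\tilde{\bm{v}}^T 2\LRp{\bm{Q}_h \circ \bm{F}}\bm{1}$. Because $\bm{F}$ is evaluated at the entropy-projected conservative variables $\tilde{\bm{u}}_i = \bm{u}(\tilde{\bm{v}}_i)$, the flux states satisfy $\bm{v}(\tilde{\bm{u}}_i) = \tilde{\bm{v}}_i$, so the entropy conservation condition of (\ref{eq:esflux}) is available just as in the periodic case. I would then split $\bm{Q}_h$ into its skew part $\tfrac{1}{2}\LRp{\bm{Q}_h - \bm{Q}_h^T}$ and its symmetric part $\tfrac{1}{2}\bm{B}_h$, using the block summation-by-parts property $\bm{Q}_h + \bm{Q}_h^T = \bm{B}_h$. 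Symmetrizing the double sum via the symmetry of $\bm{F}$ and applying entropy conservation to the skew part gives $-\bm{\psi}^T\bm{B}_h\bm{1}$, where $\bm{Q}_h\bm{1}=\bm{0}$ (established earlier in this section) is used to evaluate $\LRp{\bm{Q}_h-\bm{Q}_h^T}\bm{1} = -\bm{B}_h\bm{1}$. The symmetric part is block-diagonal and supported only on the boundary nodes, so by flux consistency $\bm{F}_{ii} = \bm{f}(\tilde{\bm{u}}_i)$ it collapses to $\tilde{\bm{v}}_b^T\bm{B}\bm{f}(\tilde{\bm{u}}_b)$. Hence the volume term equals $\tilde{\bm{v}}_b^T\bm{B}\bm{f}(\tilde{\bm{u}}_b) - \bm{\psi}_b^T\bm{B}\bm{1}$, where $\bm{\psi}_b$ denotes the entropy potential evaluated at the boundary states.

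Finally I would add the boundary penalty. Testing $\bm{V}_b^T\bm{B}\LRp{\bm{f}^* - \bm{f}(\tilde{\bm{u}}_b)}$ with $\bm{v}_N$ gives $\tilde{\bm{v}}_b^T\bm{B}\LRp{\bm{f}^* - \bm{f}(\tilde{\bm{u}}_b)}$ through $\tilde{\bm{v}}_b = \bm{V}_b\bm{v}_N$. Adding this to the volume contribution, the two consistency fluxes $\pm\tilde{\bm{v}}_b^T\bm{B}\bm{f}(\tilde{\bm{u}}_b)$ cancel, leaving the combined flux contribution $\tilde{\bm{v}}_b^T\bm{B}\bm{f}^* - \bm{\psi}_b^T\bm{B}\bm{1}$. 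Collecting all three tested terms, the system reduces to $\bm{1}^T\bm{W}\td{S}{t} = \bm{\psi}_b^T\bm{B}\bm{1} - \tilde{\bm{v}}_b^T\bm{B}\bm{f}^*$, whose right-hand side is a sum over the two boundary nodes of terms $\psi(\tilde{\bm{u}}_b) - \bm{v}(\tilde{\bm{u}}_b)^T\bm{f}^*$ weighted by the outward normal carried in $\bm{B}$. The entropy stability condition (\ref{eq:esbflux}) makes each such term nonpositive, yielding $\bm{1}^T\bm{W}\td{S}{t} \leq 0$, with equality precisely when $\bm{f}^*$ is entropy conservative.

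The step I expect to be the main obstacle is the boundary bookkeeping in the second paragraph: confirming that the symmetric (boundary) part of the flux-differencing term produces exactly the consistency flux $\tilde{\bm{v}}_b^T\bm{B}\bm{f}(\tilde{\bm{u}}_b)$ that the penalty term is designed to cancel, that the skew part produces the boundary entropy potential with the correct sign, and that the residual normal-weighted sum matches the hypothesis of (\ref{eq:esbflux}). This rests on the nontrivial property $\bm{Q}_h\bm{1}=\bm{0}$ derived earlier through the generalized SBP identity, and on evaluating both $\bm{F}$ and the boundary states at the entropy-projected variables so that $\bm{v}(\tilde{\bm{u}}_b) = \tilde{\bm{v}}_b$ and (\ref{eq:esbflux}) applies directly. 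Once these boundary terms are matched, the sign of the result is immediate.
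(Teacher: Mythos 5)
Your proposal is correct and follows essentially the same route as the paper's proof: testing with $\bm{v}_N$, splitting $2\bm{Q}_h$ into its skew part and $\bm{B}_h$ via the block SBP property, using flux consistency on the diagonal boundary part to cancel against the penalty term, symmetrizing the skew double sum and invoking entropy conservation together with $\bm{Q}_h\bm{1}=\bm{0}$ to obtain $-\bm{1}^T\bm{B}\psi(\tilde{\bm{u}}_b)$, and concluding from (\ref{eq:esbflux}). The only difference is cosmetic: you evaluate $\LRp{\bm{Q}_h-\bm{Q}_h^T}\bm{1}=-\bm{B}_h\bm{1}$ directly, whereas the paper writes the same quantity as $\psi(\tilde{\bm{u}})^T\bm{Q}_h\bm{1}-\bm{1}^T\bm{Q}_h\psi(\tilde{\bm{u}})$ before applying $\bm{Q}_h\bm{1}=\bm{0}$ and the SBP property.
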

\begin{proof}
The proof is identical in structure to the proof of Theorem 1 in \cite{chan2017discretely}.  We reproduce it here for completeness.
Testing with $\bm{v}_N$ and using the summation by parts property of $\bm{Q}_h$ then yields 
\begin{align*}
\bm{1}^T\bm{W}\td{S\LRp{\bm{V}\bm{u}_N}}{t}  + \tilde{\bm{v}}^T\LRp{\LRp{\bm{Q}_h-\bm{Q}_h} \circ \bm{F}}\bm{1} + \tilde{\bm{v}}_b^T\bm{B}\bm{f}^*  = \bm{0}.
\end{align*}
Here, we have used that, by the consistency of the entropy conservative flux (\ref{eq:esflux}), the diagonal entries of $\bm{F}$ are $\bm{F}_{ii} = \bm{f}\LRp{\tilde{\bm{u}}_i}$.  Thus, since $\bm{B}_h$ is also diagonal,
\[
\LRp{\bm{B}_h\circ\bm{F}}\bm{1} = \bm{B} \bm{f}(\tilde{\bm{u}}_b).
\]
Expanding out the term $\tilde{\bm{v}}^T\LRp{\LRp{\bm{Q}_h-\bm{Q}_h^T} \circ \bm{F}}\bm{1}$ and using skew-symmetry of $\bm{Q}_h-\bm{Q}_h^T$ yields
\begin{align*}
\tilde{\bm{v}}^T\big({\LRp{\bm{Q}_h-\bm{Q}_h^T}} \circ \bm{F}\big)\bm{1} &= 
%\sum_{ij} \LRp{\bm{S}_h}_{ij} \tilde{\bm{v}}_i^T\bm{f}_S\LRp{\tilde{\bm{u}}_i,\tilde{\bm{u}}_j} \\
%&= 
\frac{1}{2}\sum_{ij} \LRp{\bm{Q}_h-\bm{Q}_h^T}_{ij} \LRp{\tilde{\bm{v}}_i-\tilde{\bm{v}}_j}^T\bm{f}_S\LRp{\tilde{\bm{u}}_i,\tilde{\bm{u}}_j}\\
&= \frac{1}{2}\sum_{ij} \LRp{\bm{Q}_h-\bm{Q}_h^T}_{ij} \LRp{\psi(\tilde{\bm{u}}_i)-\psi(\tilde{\bm{u}}_j)}\\
%&= \sum_{ij} \LRp{\bm{Q}_h}_{ij} \LRp{\psi(\tilde{\bm{u}}_i)-\psi(\tilde{\bm{u}}_j)} 
&= {\psi(\tilde{\bm{u}})^T\bm{Q}_h\bm{1} - \bm{1}^T\bm{Q}_h\psi(\tilde{\bm{u}})} = -\bm{1}^T\bm{B}_h\psi(\tilde{\bm{u}}) = -\bm{1}^T\bm{B}\psi(\tilde{\bm{u}}_b),
\end{align*}
where we have used that $\bm{Q}_h\bm{1} = \bm{0}$ and the SBP property in the last two lines.  Using that $\bm{B}$ is diagonal, straightforward manipulations imply that
\[
\bm{1}^T\bm{W}\td{S\LRp{\bm{V}\bm{u}_N}}{t}  - \bm{1}^T\bm{B}\LRp{\psi(\tilde{\bm{u}}_b)-\tilde{\bm{v}}_b^T\bm{f}^*}  = \bm{0}.
\]
If the flux is entropy stable, then $\bm{1}^T\bm{W}\td{S\LRp{\bm{V}\bm{u}_N}}{t} \leq 0$.  Equality holds if the flux is entropy conservative such that $\psi(\tilde{\bm{u}}_b)=\tilde{\bm{v}}_b^T\bm{f}^*$.
\end{proof}

%\note{Discuss issues with high order accuracy and entropy stability for SBP \cite{chan2018efficient}.}

\section{Extension to higher dimensions}
\label{sec:7}

\subsection{Periodic domains}
For periodic domains, the extension to higher spatial dimensions is relatively straightforward.  We provide a concrete construction of matrices and formulations in two dimensions.  For finite volume methods on a $K \times K$ structured quadrilateral elements of size $\Delta x \times \Delta x$, differentiation matrices along each coordinate direction can be constructed as Kronecker products of one-dimensional matrices.  Let $\bm{Q}_{\rm 1D}$ denote the one-dimensional periodic differentiation matrix defined in (\ref{eq:Qmat}).  The differentiation matrix along the $i$th coordinate direction $\bm{Q}^i$ can be constructed as follows
\[
\bm{Q}^1 = \bm{Q}_{\rm 1D}\otimes \Delta x \bm{I}, \qquad \bm{Q}^2 = \Delta x\bm{I} \otimes \bm{Q}_{\rm 1D}.
\]
Given a basis matrix $\bm{V}$ and indices of hyper-reduced points $\mathcal{I}$, we can introduce test basis matrices $\bm{V}^i_t$ such that the range of  $\bm{V}^i_t$ is equal to the direct sum of the ranges of $\bm{V}$ and $\bm{Q}^i\bm{V}$
\[
\mathcal{R}\LRp{\bm{V}^i_t} = \mathcal{R}\LRp{\begin{bmatrix}\bm{V} &\bm{Q}^i\bm{V}\end{bmatrix}}, \qquad i = 1,\ldots,d.
%\mathcal{R}\LRp{\bm{V}^y_t} = \mathcal{R}\LRp{\begin{bmatrix}\bm{V} &\bm{Q}^y\bm{V}\end{bmatrix}}.
\]
Note that, unlike the 1D case, the test bases differ along each coordinate direction.  However, the dimension of the range of each test basis $\bm{V}_t^i$ is still at most $2N+1$.  
We can now define hyper-reduced projection matrices for the $i$th coordinate direction
\begin{align*}
\bm{P}^i_t = \LRp{\bm{V}^i_t\LRp{\mathcal{I},:}^T\bm{W}\bm{V}^i_t\LRp{\mathcal{I},:}}^{-1}\bm{V}^i_t\LRp{\mathcal{I},:}^T\bm{W}.%,\\
%\bm{P}^y_t = \LRp{\bm{V}^y_t\LRp{\mathcal{I},:}^T\bm{W}\bm{V}^y_t\LRp{\mathcal{I},:}}^{-1}\bm{V}^y_t\LRp{\mathcal{I},:}^T\bm{W}.
\end{align*}
These matrices can be used to construct reduced nodal differentiation matrices $\bm{Q}_t^i$ %, \bm{Q}_t^y$ 
\[
\bm{Q}_t^i = \LRp{\bm{P}^i_t}^T\LRp{\LRp{\bm{V}^i_t}^T\bm{Q}^i\bm{V}^i_t}\bm{P}^i_t,
% \qquad 
%\bm{Q}_t^y = \LRp{\bm{P}^y_t}^T\LRp{\LRp{\bm{V}^y_t}^T\bm{Q}^y\bm{V}^y_t}\bm{P}^y_t.
\]
which are used to construct an entropy conservative reduced order model in $d$ dimensions
\begin{gather}
\bm{M}_N\td{\bm{u}_N}{t} + \sum_{i=1}^d 2\bm{V}\LRp{\mathcal{I},:}^T\LRp{\bm{Q}^i_t \circ \bm{F}^i}\bm{1} = \bm{0} \label{eq:hrromd}\\
\bm{M}_N = \bm{V}\LRp{\mathcal{I},:}^T\bm{W}\bm{V}\LRp{\mathcal{I},:}, \qquad \bm{F}^i_{jk} = \bm{f}^i_S\LRp{\tilde{\bm{u}}_j,\tilde{\bm{u}}_k}.\nonumber
%\bm{M}_N = \bm{V}\LRp{\mathcal{I},:}^T\bm{W}\bm{V}\LRp{\mathcal{I},:}, \qquad \bm{P} = \bm{M}_N^{-1}\bm{V}\LRp{\mathcal{I},:}^T\bm{W},\nonumber\\ 
%\bm{v}_N = \bm{P}\bm{v}\LRp{\bm{V}\LRp{\mathcal{I},:}\bm{u}_N}, \qquad \tilde{\bm{v}} = \bm{V}\LRp{\mathcal{I},:}\bm{v}_N, \nonumber\\
%\tilde{\bm{u}} = \bm{u}\LRp{\tilde{\bm{v}}}, \qquad ,\nonumber
\end{gather}
Here, $\bm{f}^i_S\LRp{\bm{u}_L,\bm{u}_R}$ is the entropy conservative flux in the $i$th coordinate direction and $\bm{M}_N$ is the mass matrix as defined in (\ref{eq:hrrom}).  We can show that (\ref{eq:hrromd}) is entropy conservative by repeating steps in the proof of Theorem~\ref{thm:hrromes} for each coordinate direction.  

The diffusive matrices $\bm{K}$ can be similarly extended to higher dimensions via Kronecker products
\[
\bm{K} = \bm{K}_{\rm 1D}\otimes \bm{I} + \bm{I}\otimes \bm{K}_{\rm 1D}.  
\]
An hyper-reduction of $\bm{K}$ analogous to the one described in Section~\ref{sec:diss} can be used to guarantee a discrete dissipation of entropy.  

\subsection{Non-periodic domains in higher dimensions}

Non-periodic domains and boundary conditions can be treated by combining hybridized SBP operators and a weak imposition of boundary conditions as was done for the 1D hyper-reduced formulation (\ref{eq:esbchr}).  Without loss of generality, we assume a square domain with $4$ boundary faces, each of which is discretized by $K_{\rm 1D}$ intervals of size $\Delta x$.  Denote values of the outward normal vector on the domain boundary by $\bm{n} = \LRs{n_1, \ldots, n_d}^T$.  We enforce non-periodic boundary conditions through numerical fluxes at boundary points.  However, the number of boundary points scales with $K_{\rm 1D}$.  To ensure that the cost of the reduced order model scales with the number of modes $N$ rather than the dimension of the full order model, we approximate boundary terms by a hyper-reduced weighted combination of sampled boundary points.  

Let $\bm{V}_b$ denote the matrix whose columns correspond to values of the reduced basis at boundary points.  Let $\mathcal{I}_b$ denote a sub-sampled set of $N_b$ boundary points, and let $g$ denote a nonlinear function.  We then approximate the boundary inner product 
\[
\bm{V}_b^Tg(\bm{V}_b\bm{u}_N) \approx \bm{V}_b\LRp{\mathcal{I}_b,:}^T \bm{W}_b g\LRp{\bm{V}_b\LRp{\mathcal{I}_b,:}}.
\]
where $\bm{W}_b = {\rm diag}\LRp{\bm{w}_b}$ is a diagonal matrix whose entries consist of positive weights.  
We also define $N_b\times N_b$ operators $\bm{B}^1, \bm{B}^2$ as the diagonal matrices whose entries consist of weighted component values of outward normals on the boundary 
\[
\bm{B}^i = {\rm diag}\LRp{\bm{n}^i} \bm{W}_b,
\]
where $\bm{n}^i$ is a vector containing the values of the component $n_i$ at boundary points.  

We can now construct hybridized SBP operators using hyper-reduced points on both the volume and boundary of the domain.  Let $\bm{V}_{bt}$ be the matrix which evaluates the $i$th test basis at all points on the boundary.  Then, we can define $\bm{E}_i$ as the matrix which extrapolates from hyper-reduced volume points to hyper-reduced boundary points through projection onto the $i$th test basis
\[
\bm{E}_i = \bm{V}_{bt}^i\LRp{\mathcal{I}_b,:}\bm{P}^i_t.  
\]
The hybridized SBP operator for differentiation along the $i$th coordinate is then
\[
\bm{Q}_h^i = \begin{bmatrix}
\bm{Q}^i_t - \LRp{\bm{Q}^i_t}^T & \bm{E}_i^T\bm{B}^i \\ 
-\bm{B}^i\bm{E}_i & \bm{B}^i
\end{bmatrix}.
\]
These operators can be used to construct a hyper-reduced formulation in higher dimensions
\begin{gather}
\bm{M}_N\td{\bm{u}_N}{t} + \sum_{i=1}^d \LRp{2\bm{V}\LRp{\mathcal{I},:}^T\LRp{\bm{Q}^i_h \circ \bm{F}^i}\bm{1} + \bm{V}_b\LRp{\mathcal{I}_b,:}^T\bm{B}^i\LRp{\bm{f}_i^* - \bm{f}_i(\bm{u}_b)}} = \bm{0} \label{eq:hrromdbc},%\\
\end{gather}
where $\bm{M}_N$ and $\bm{F}^i$ are as defined in (\ref{eq:hrromd}), and $\bm{f}_i(\bm{u}), \bm{f}_i^*$ are the $i$th components of the flux function and boundary numerical flux, respectively.  

In contrast to the 1D case, additional steps are necessary to ensure that entropy stability is preserved under this boundary hyper-reduction.  Recall that the proof of entropy stability for 1D non-periodic domains in Theorem~\ref{thm:esbchr} requires that $\bm{Q}_h\bm{1} = \bm{0}$.  The analogous proof of entropy stability in multiple dimensions requires that $\bm{Q}^i_h\bm{1} = \bm{0}$ for $i = 1,\ldots, d$.  However, this is not automatically satisfied under an arbitrary hyper-reduction of the boundary points.  Expanding out $\bm{Q}^i_h\bm{1}$ yields
\[
\bm{Q}^i_h\bm{1} = \begin{bmatrix}
\bm{Q}^i_t\bm{1} - \LRp{\bm{Q}^i_t}^T\bm{1} + \bm{E}_i^T\bm{B}^i \bm{1}\\
\bm{0}
\end{bmatrix}= \begin{bmatrix}
-\LRp{\bm{Q}^i_t}^T\bm{1} + \bm{E}_i^T\bm{B}^i \bm{1}\\
\bm{0}
\end{bmatrix}.
\]
In general, $-\LRp{\bm{Q}^i_t}^T\bm{1} + \bm{E}^T\bm{B}^i \bm{1}\neq 0$, so $\bm{Q}^i_h\bm{1} \neq \bm{0}$ and the proof of entropy stability does not hold.  To enforce that $\bm{Q}^i_h\bm{1} = \bm{0}$, we impose constraints on the boundary weights $\bm{w}_b$.  Note that 
\begin{align*}
-\LRp{\bm{Q}^i_t}^T\bm{1} + \bm{E}^T\bm{B}^i \bm{1} &= -\LRp{\bm{P}^i_t}^T\bm{V}_t^T\bm{Q}\bm{V}_t\bm{P}^i_t\bm{1} + \LRp{\bm{P}^i_t}^T\LRp{\bm{V}^i_{bt}\LRp{\mathcal{I}_b,:}}^T\bm{B}^i\bm{1}\\
&= \LRp{\bm{P}^i_t}^T\LRp{-\LRp{\bm{V}^i_t}^T\bm{Q}\bm{1} + \LRp{\bm{V}^i_{bt}}\LRp{\mathcal{I}_b,:}^T{\rm diag}\LRp{\bm{n}^i}\bm{w}_b}.
\end{align*}
where we have used that $\bm{V}^i_t\bm{P}^i_t\bm{1} = \bm{1}$.  Thus, to ensure that $\bm{Q}^i_h\bm{1} = 0$, it is sufficient to guarantee that
\begin{equation}
\LRp{\bm{V}^i_{bt}}\LRp{\mathcal{I}_b,:}^T{\rm diag}\LRp{\bm{n}^i}\bm{w}_b = \LRp{\bm{V}^i_t}^T\bm{Q}\bm{1}, \qquad i = 1,\ldots, d.
\label{eq:sbpconstraints}
\end{equation}
The $dN$ constraints encoded in (\ref{eq:sbpconstraints}) can be interpreted as enforcing a discrete version of the fundamental theorem of calculus relating approximate integrals of reduced basis derivatives to boundary integrals of reduced basis values.  We enforce these constraints on the boundary weights $\bm{w}_b$ (\ref{eq:sbpconstraints}) directly into a hyper-reduction approach based on the solution of a linear programming problem using the dual simplex method, and refer the reader to \cite{yano2019lp} for details.  We note that it may also be possible to enforce these constraints, for example, by augmenting the non-negative least squares solve in Algorithm~\ref{alg:hr} with equality constraints, and will explore these directions in future work.

\section{Numerical experiments}
\label{sec:8}
{In this section, we study the behavior of entropy stable reduced order models for} the compressible Euler equations.  In $d$ dimensions, these are given by:
\begin{align*}
\pd{\rho}{t} &+ {\sum_{j=1}^d \pd{\LRp{\rho {u}_j}}{x_j}} = 0,\\
{\pd{\rho {u}_i}{t}} &+ {\sum_{j=1}^d \pd{\LRp{\rho u_iu_j + p\delta_{ij} }}{x_j}} = 0, \qquad i = 1,\ldots,d\\
\pd{E}{t} &+ {\sum_{j=1}^d \pd{\LRp{{u}_j(E+p)}}{x_j}} = 0.\nonumber
\end{align*}
Here, $\rho$ is density, {$u_i$ is the $i$th component of} velocity, and $E$ is the total energy.  The pressure $p$ and specific internal energy $\rho e$ are given by 
\[
{p = (\gamma-1)\LRp{E - \frac{1}{2}\rho \LRb{\bm{u}}^2}}, \qquad {\rho e = E - \frac{1}{2}\rho\LRb{\bm{u}}^2}, \qquad \LRb{\bm{u}}^2 = {\sum_{j=1}^d u_j^2}.
\]
%There exists an infinite family of suitable convex entropies for the compressible Euler equations \cite{harten1983symmetric}.  However, 
There is a unique entropy which symmetrizes the viscous heat conduction term in the compressible Navier-Stokes equations \cite{hughes1986new}.  This entropy $S(\bm{u})$ is given by
\begin{equation*}
S(\bm{u}) = -\bnote{\rho s}, %\frac{\rho s}{\gamma-1}, 
\end{equation*}
where $s = \log\LRp{\frac{p}{\rho^\gamma}}$ is the physical specific entropy, and the dimension $d = 1,2,3$.  The entropy variables in $d$ dimensions are given by
\begin{gather*}
v_1 = \frac{\rho e (\gamma + 1 - s) - E}{\rho e}, \qquad v_{d+2} = -\frac{\rho}{\rho e}, \qquad
v_{1+ i}= \frac{\rho {{u}_i}}{\rho e}, \qquad i = 1,\ldots, d.
\end{gather*}
while the conservation variables in terms of the entropy variables are given by
\begin{gather*}
\rho = -(\rho e) v_{d+2}, \qquad E = (\rho e)\LRp{1 - \frac{\sum_{j=1}^d{v_{1+j}^2}}{2 v_{d+2}}}, \qquad
 \rho {u_i} = (\rho e) v_{1+i}, \qquad i = 1,\ldots,d,
\end{gather*}
where the quantities $\rho e$ and $s$ in terms of the entropy variables are 
\begin{equation*}
\rho e = \LRp{\frac{(\gamma-1)}{\LRp{-v_{d+2}}^{\gamma}}}^{1/(\gamma-1)}e^{\frac{-s}{\gamma-1}}, \qquad s = \gamma - v_1 + \frac{\sum_{j=1}^d{v_{1+j}^2}}{2v_{d+2}}.
\end{equation*}
{Let $f_i, f_j$ denote two arbitrary values.  We define the average and logarithmic average}
\[
{\avg{f} = \frac{f_i + f_j}{2}}, \qquad {\avg{f}^{\log} = \frac{f_i - f_j}{\log\LRp{f_i}-\log\LRp{f_j}}}.
\]
To ensure numerical stability when the denominator is close to zero, we evaluate the logarithmic average using the algorithm of \cite{ismail2009affordable}.  
Explicit expressions for entropy conservative fluxes are given by Chandrashekar \cite{chandrashekar2013kinetic}.  In 1D, these fluxes are
\begin{align*}
f^1_S(\bm{u}_L,\bm{u}_R) &= \avg{\rho}^{\log} \avg{u}\\
f^2_S(\bm{u}_L,\bm{u}_R) &= f^1_S(\bm{u}_L,\bm{u}_R)\avg{u} + p_{\rm avg}\\
f^3_S(\bm{u}_L,\bm{u}_R) &= \LRp{E_{\rm avg} + p_{\rm avg}}\avg{u},
\end{align*}
where the auxiliary quantities are
\begin{gather*}
{\beta = \frac{\rho}{2p}}, \qquad p_{\rm avg} = \frac{\avg{\rho}}{2\avg{\beta}}, \qquad E_{\rm avg} = \frac{\avg{\rho}^{\log}}{2\avg{\beta}^{\log}\LRp{\gamma -1}}   + \avg{\rho}^{\log}\frac{{u_{\rm avg}^2}}{2}.
\end{gather*}
In 1D, $u_{\rm avg}^2 = 2\avg{u}^2-\avg{u^2}$.  
In two dimensions, the $x$ and $y$ fluxes are given by
\begin{align*}
&f^1_{x,S}(\bm{u}_L,\bm{u}_R) = \avg{\rho}^{\log} \avg{u_1},& &f^1_{y,S}(\bm{u}_L,\bm{u}_R) = \avg{\rho}^{\log} \avg{u_2},&\\
&f^2_{x,S}(\bm{u}_L,\bm{u}_R) = f^1_{1,S} \avg{u_1} + p_{\rm avg},&  &f^2_{y,S}(\bm{u}_L,\bm{u}_R) = f^1_{2,S} \avg{u_1},&\nonumber\\
&f^3_{x,S}(\bm{u}_L,\bm{u}_R) = f^2_{2,S},& &f^3_{y,S}(\bm{u}_L,\bm{u}_R) = f^1_{y,S} \avg{u_2} + p_{\rm avg},&\nonumber\\
&f^4_{x,S}(\bm{u}_L,\bm{u}_R) = \LRp{E_{\rm avg} + p_{\rm avg}}\avg{u_1},& &f^4_{y,S}(\bm{u}_L,\bm{u}_R) = \LRp{E_{\rm avg} + p_{\rm avg} }\avg{u_2},& \nonumber
\end{align*}
where in 2D, we redefine ${u^2_{\rm avg}} = 2(\avg{u_1}^2 + \avg{u_2}^2) - \LRp{\avg{u_1^2} +\avg{u_2^2}}.$

The viscous terms are treated as described in Section~\ref{sec:entropydissipation}, and utilize evaluations of the Jacobian $\pd{\bm{u}}{\bm{v}}$.  Explicit expressions for Jacobians in the compressible Euler and Navier-Stokes equations are given in \cite{hughes1986new}.  The solution is evolved using an explicit 5-stage 4th order Runge-Kutta scheme.  
%\note{Add equations for $\pd{\bm{u}}{\bm{v}}$. }

\subsection{1D Euler equations}

We begin by examining the 1D Euler equations with reflective wall boundary conditions.  We utilize a full order finite volume model with $2500$ cells on $[-1,1]$ with a CFL of $.75$, and set the artificial viscosity parameter to $\epsilon = 2\times 10^{-4}$.   It was shown in \cite{svard2014entropy, chen2017entropy} that wall boundary conditions can be imposed using a ``mirror state''.  We use a boundary numerical flux $\bm{f}^* = \bm{f}_S\LRp{\bm{u}^+,\bm{u}}$ augmented with local Lax-Friedrichs penalization, where the exterior state $\bm{u}^+$ is defined by
\[
\rho^+ = \rho, \qquad {u}^+ = -u, \qquad p^+ = p.  
\]  The initial condition is set to be 
\[
\rho = 2 + \frac{1}{2}e^{-100\LRp{x-\frac{1}{2}}^2}, \qquad 
    u = \frac{1}{10}e^{-100\LRp{x-\frac{1}{2}}^2}, \qquad p = \rho^{\gamma}.
\]
The solution exhibits a viscous shock some time after $T=.25$.  

%We first examine the periodic 1D Euler equations.  We utilize a full order finite volume model with $1000$ cells on $[-1,1]$, and utilize .  We set the artificial viscosity parameter to be $\epsilon = 5\times 10^{-4}$, and use the intial condition from \cite{maboudi2018conservative}
%\[
%\rho = \frac{1}{2} + \frac{1}{5}\cos\LRp{2\pi x}, \qquad u = 1.5, \qquad p = \frac{1}{2} + \frac{1}{5}\sin\LRp{2 \pi x}.
%\]
We first examine the impact of enriching solution snapshots with snapshots of the entropy variables.  We run the full order model until final time $T=.7$ and store the solution at $2801$ time-steps.  We compute a reduced basis using SVD by subsampling every $10$ snapshots.  Figure~\ref{subfig:a} shows the decay of the singular values with and without entropy variable enrichment.  In both cases, the decay is similar, with entropy variable enrichment resulting in a slightly slower decay for singular values smaller than $10^{-8}$.  The difference becomes more pronounced for a coarser subsampling of the snapshots.  We note that the enrichment does affect the form of the resulting singular vectors (Figure~\ref{subfig:c}).  We also compute projection errors for each solution snapshot using $25$ modes in Figure~\ref{subfig:b}, along with differences between the projected solutions with and without entropy variable enrichment.  The projection errors are nearly indistinguishable.  %For this problem setup, the remaining singular vectors show little difference; however, for other settings, we have observed larger differences with and without enrichment for higher modes.

\begin{figure}
\centering
\subfloat[Singular values]{\raisebox{.24em}{\includegraphics[width=.33\textwidth]{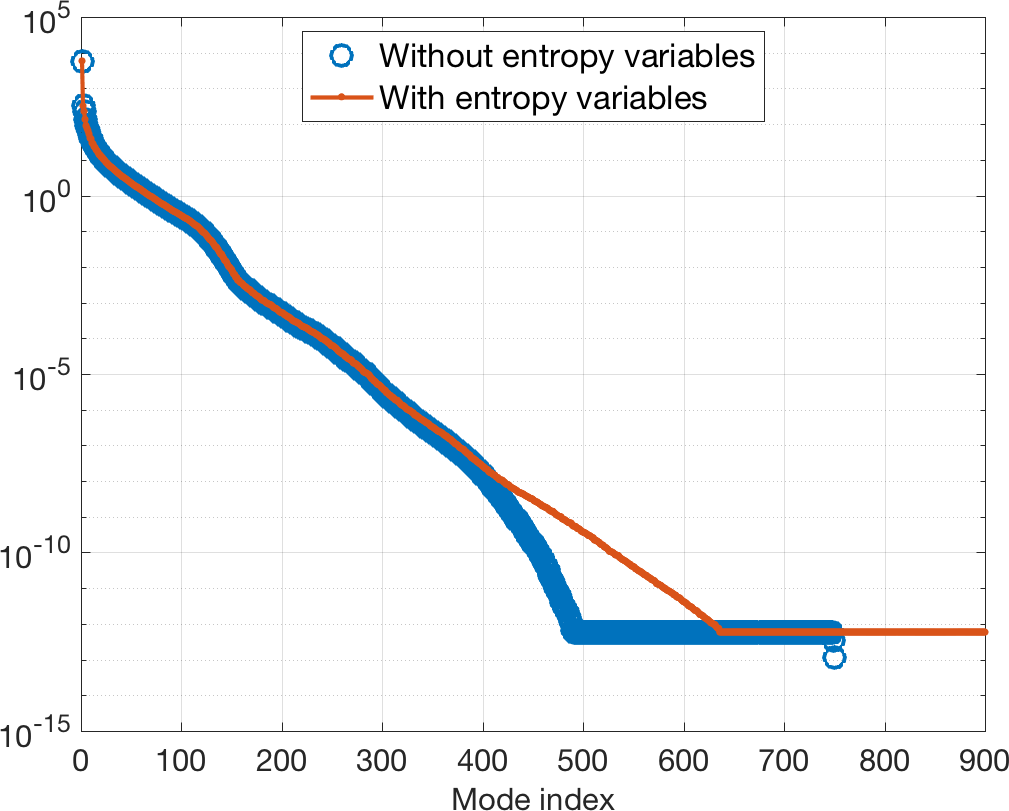}}\label{subfig:a}}
\hspace{.02em}
\subfloat[Fifth singular vector]{\raisebox{.6em}{\includegraphics[width=.322\textwidth]{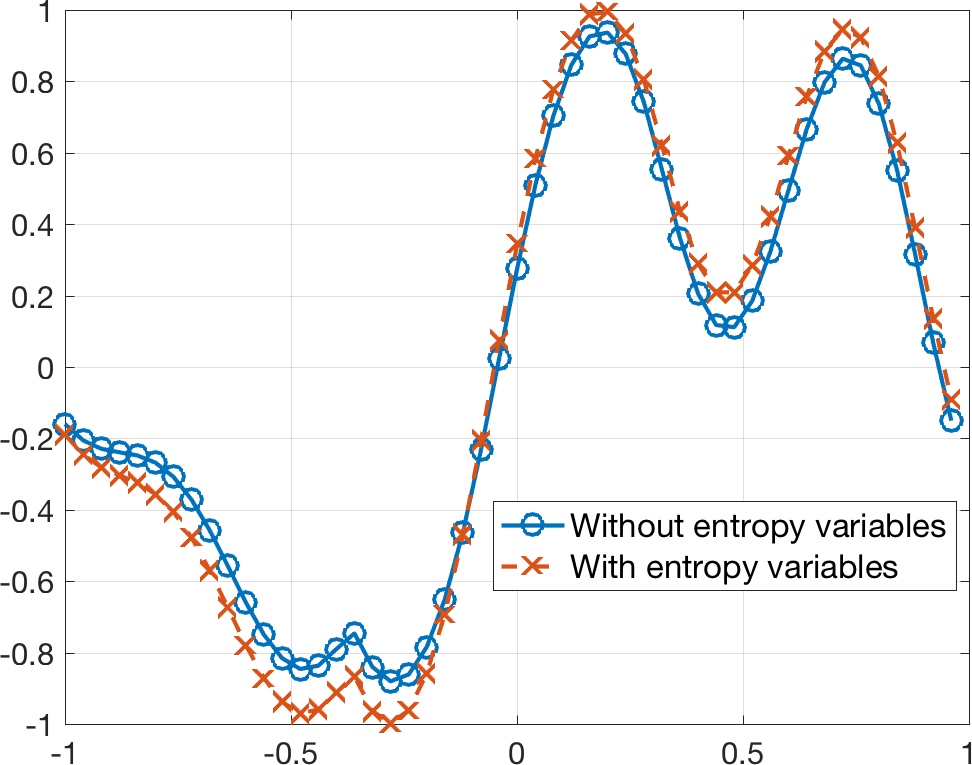}}\label{subfig:c}}
\hspace{.02em}
\subfloat[Projection errors, 25 modes]{\raisebox{.05em}{\includegraphics[width=.33\textwidth]{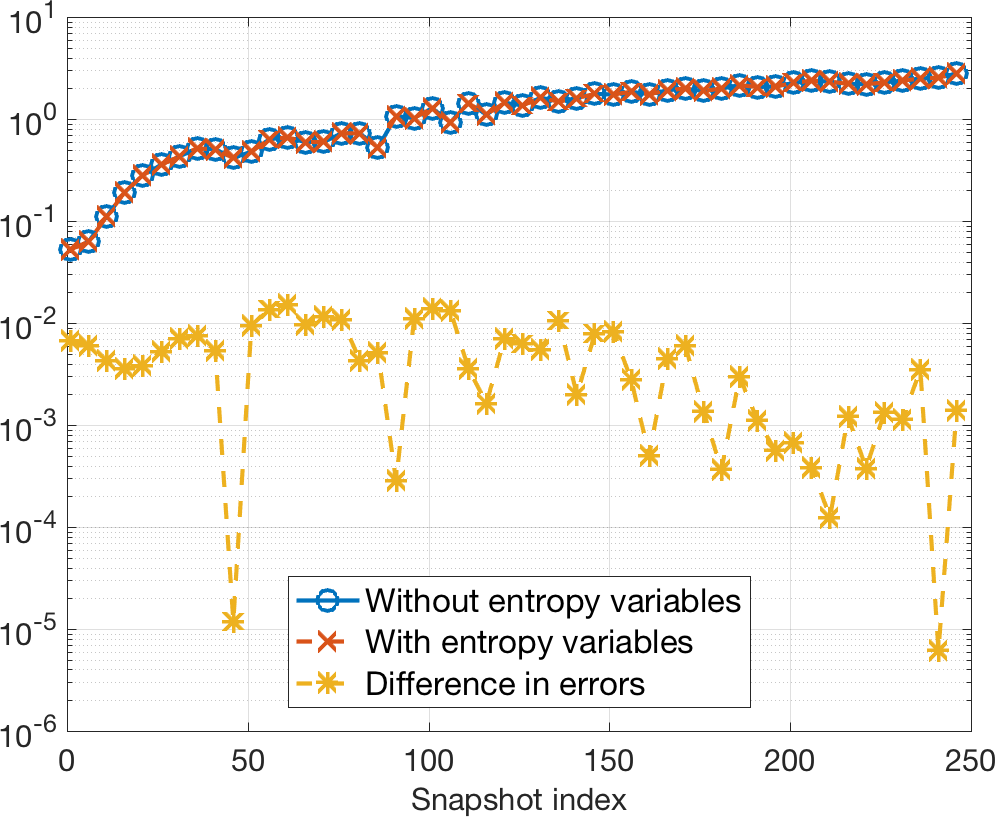}\label{subfig:b}}}
\caption{Snapshot singular values and reduced basis functions with and without entropy variable enrichment. }
\label{fig:svd}
\end{figure}

We next examine solutions produced by the reduced order models.  It is known that reduced models can sometimes utilize a larger CFL compared the full order model \cite{lucia2004reduced, knezevic2011reduced}; however, to ensure that temporal errors are small, all solutions are computed using the same CFL used to generate the solution snapshots.  Figure~\ref{fig:romsols} shows density computed using 25, 75, and 125 modes and the hyper-reduced treatment of viscosity in (\ref{eq:visc2}) \bnote{using $\epsilon = 2e-4$}.  For all resolutions, the shock is under-resolved and the solution possesses Gibbs-type oscillations.  However, despite this under-resolution, the ROM remains stable and does not blow up.  Moreover, the reduced order solution converges uniformly to the full order solution as the number of modes increases.  

\begin{figure}[!h]
\centering
\subfloat[25 modes, $T=.25$]{\includegraphics[width=.32\textwidth]{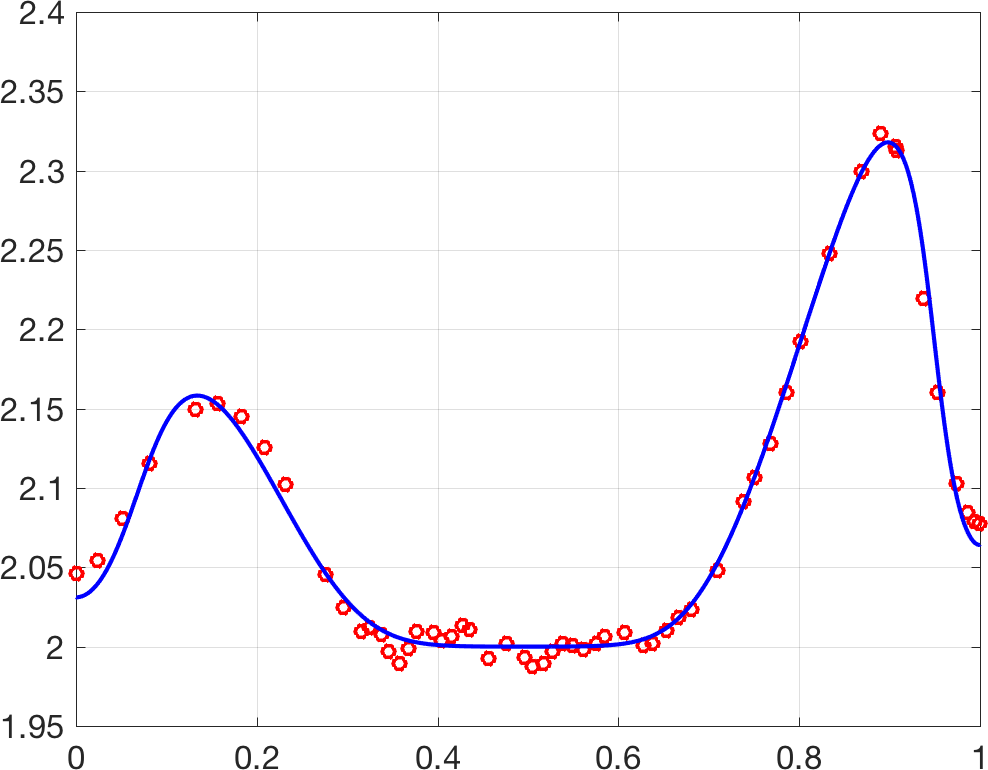}}
\hspace{.05em}
\subfloat[75 modes, $T=.25$]{\includegraphics[width=.32\textwidth]{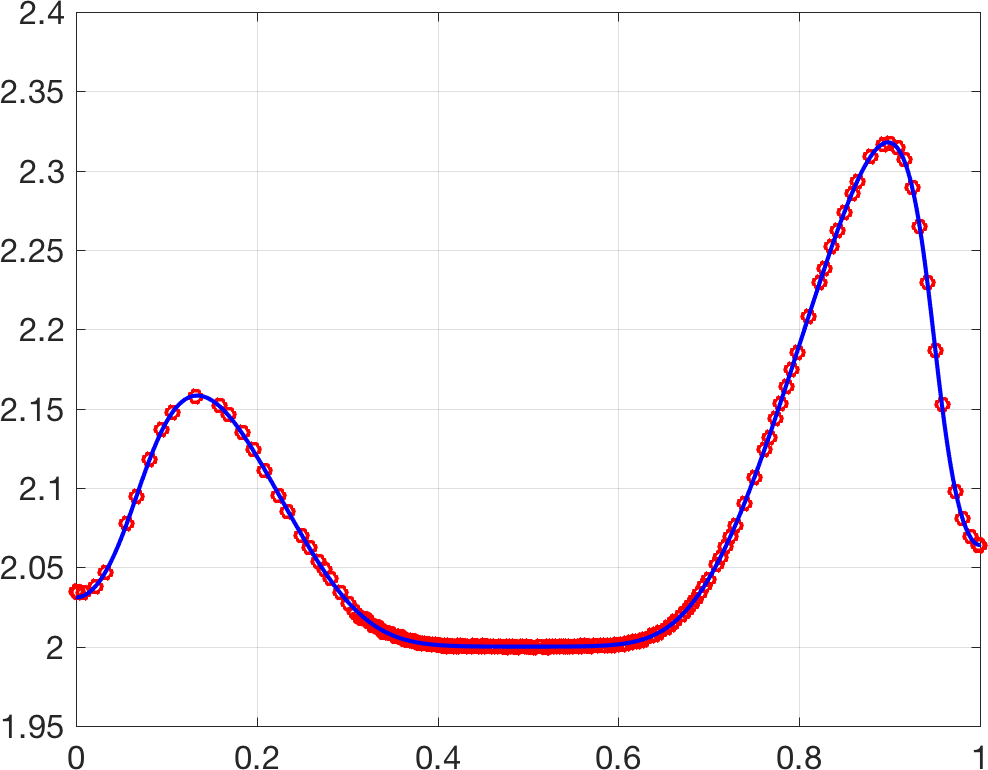}}
\hspace{.05em}
\subfloat[125 modes, $T=.25$]{\includegraphics[width=.32\textwidth]{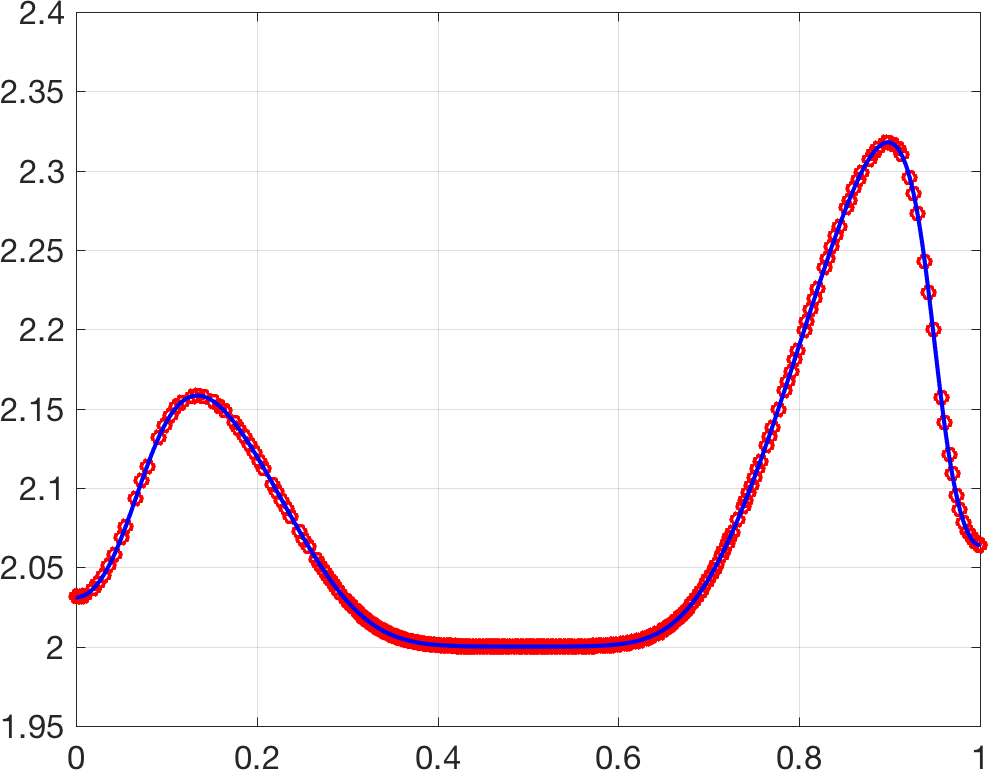}}\\
\subfloat[25 modes, $T=.75$]{\includegraphics[width=.32\textwidth]{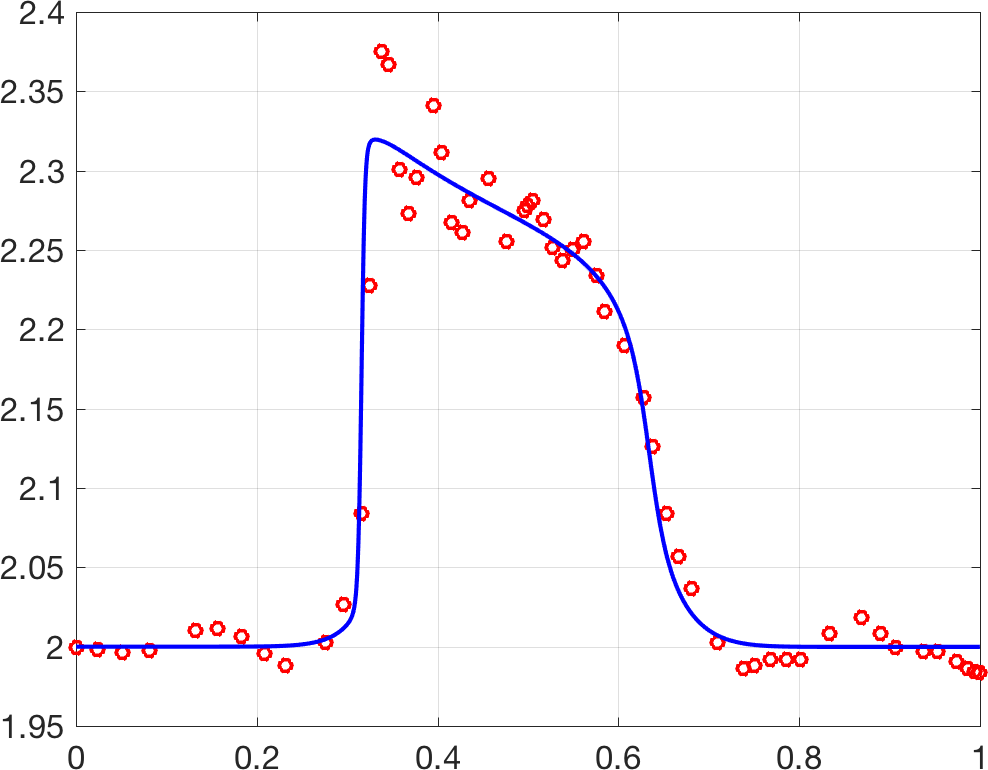}}
\hspace{.05em}
\subfloat[75 modes, $T=.75$]{\includegraphics[width=.32\textwidth]{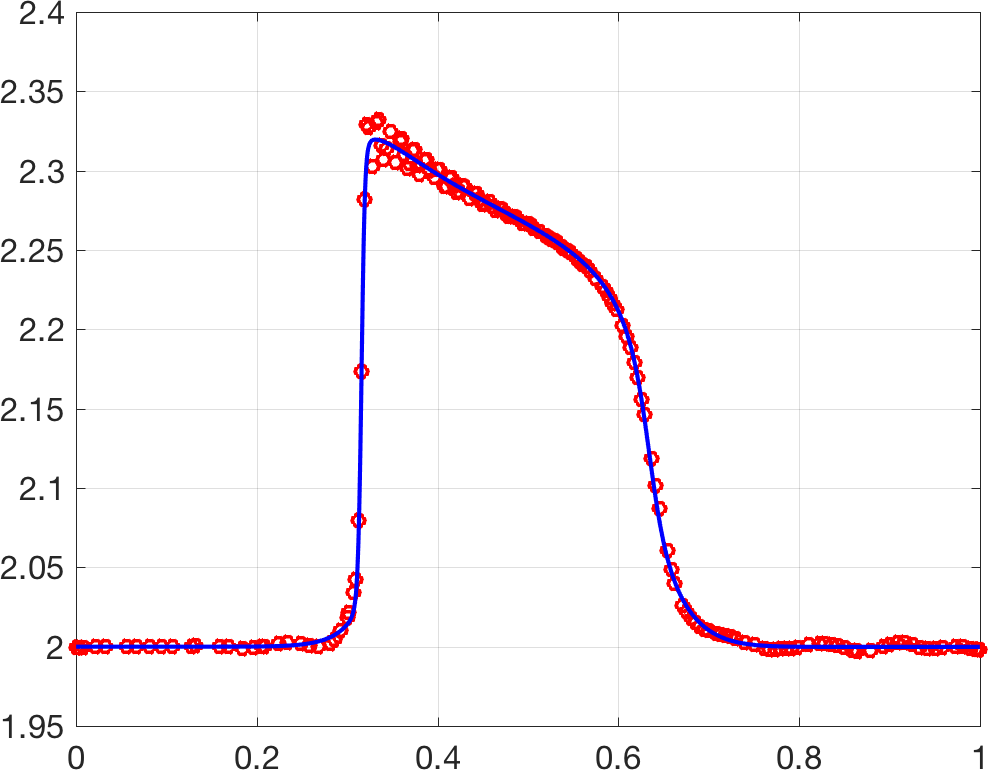}}
\hspace{.05em}
\subfloat[125 modes, $T=.75$]{\includegraphics[width=.32\textwidth]{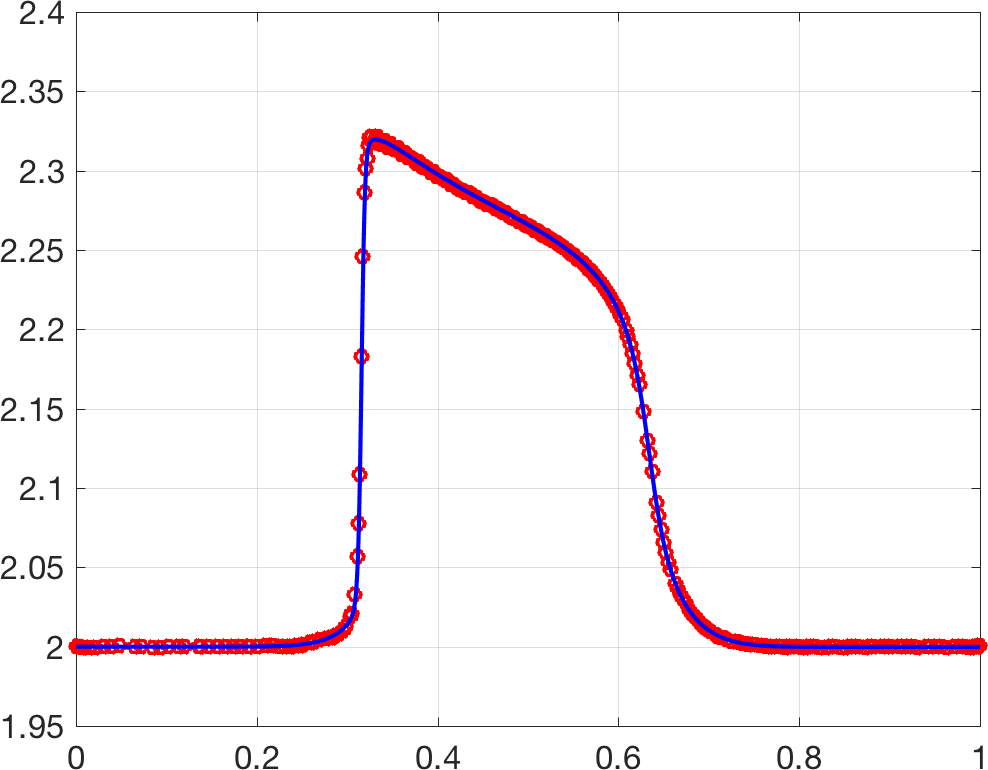}}
\caption{Density $\rho$ computed using 25, 75, and 125 modes at times $T=.25, .75$ \bnote{for $\epsilon = 2e-4$}.  The solution of the reduced order model is plotted at convective hyper-reduced points, shown in \textcolor{red}{red}.  The viscous points are not shown.}
\label{fig:romsols}
\end{figure}
%For 25 modes, 92 convective hyper-reduced points and 133 viscous hyper-reduced points are used.  For 75 modes, 245 convective hyper-reduced points and 242 viscous hyper-reduced points are used.  For 125 modes, 410 convective hyper-reduced points and 292 viscous hyper-reduced points are used.  

Table~\ref{tab:hrpts} shows the number of hyper-reduced points computed for different numbers of modes.  We report on three different sets of points: empirical cubature points (used to approximate the mass matrix $\Delta x \bm{V}^T\bm{V}$), stabilizing points (added to control the condition number of the test mass matrix; see Section~\ref{sec:condtest}), and viscous points (used to approximate viscous terms; see Section~\ref{sec:diss}).  All point sets are computed using Algorithm~\ref{alg:hr} with dimensionality reduction as described in Section~\ref{sec:hyperreducalgo}.  In all cases, we observe that the number of empirical cubature and viscous points grows roughly as $2N$ (similar observations were made in \cite{hernandez2017dimensional}), while the number of stabilizing points hovers around $30$ for $N > 25$.  

\begin{table}[!h]
\centering
\begin{tabular}{|c || c | c | c |c |}
\hline
Number of modes $N$ & 25 & 75 & 125 & 175\\
\hhline{|=|=|=|=|=|}
Number of empirical cubature points &54 & 158 & 259 &355 \\
\hline
Number of stabilizing points & 3& 21 & 36 & 28 \\
\hline
Number of viscous points & 54 &159 & 259 & 366  \\
\hline
\end{tabular}
\caption{Number of computed hyper-reduced points for the 1D Euler equations.}
\label{tab:hrpts}
\end{table}

Next, we verify that (in the absence of viscosity), the ROM is semi-discretely entropy conservative.  The proof of this property uses that the entropy contribution of the convective term $\bm{v}_N^T \bm{V}_h^T\LRp{\bm{Q}_h \circ \bm{F}}\bm{1} = 0$.  Figure~\ref{fig:novisc} verifies this, plotting the absolute value of the entropy convective term over time.  At all time-steps, $\LRb{\bm{v}_N^T \bm{V}_h^T\LRp{\bm{Q}_h \circ \bm{F}}\bm{1}}$ is near $O\LRp{10^{-14}}$ (close to machine precision), confirming that the formulation (\ref{eq:esbchr}) is discretely entropy conservative in the absence of viscosity and boundary flux penalization terms.  For 125 modes, the solution is significantly more oscillatory than the case of $\epsilon = 2\times 10^{-4}$; however, the reduced solution does not blow up.  

\begin{figure}[!h]
\centering
\subfloat[Convective entropy contribution $\LRb{\bm{v}_N^T \bm{V}_h^T\LRp{\bm{Q}_h \circ \bm{F}}\bm{1}}$]{\includegraphics[width=.4525\textwidth]{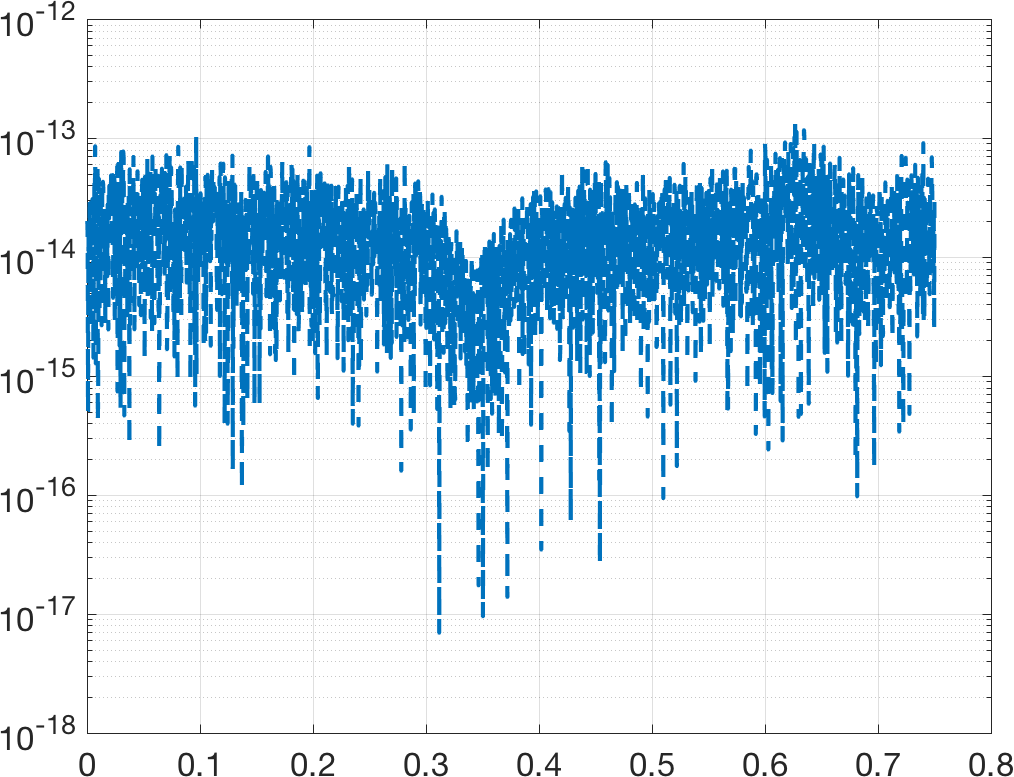}}
\hspace{2em}
%\subfloat[Difference, 25 modes]{\includegraphics[width=.32\textwidth]{}}
\subfloat[Density $\rho$ with 125 modes and no viscosity]{\includegraphics[width=.432\textwidth]{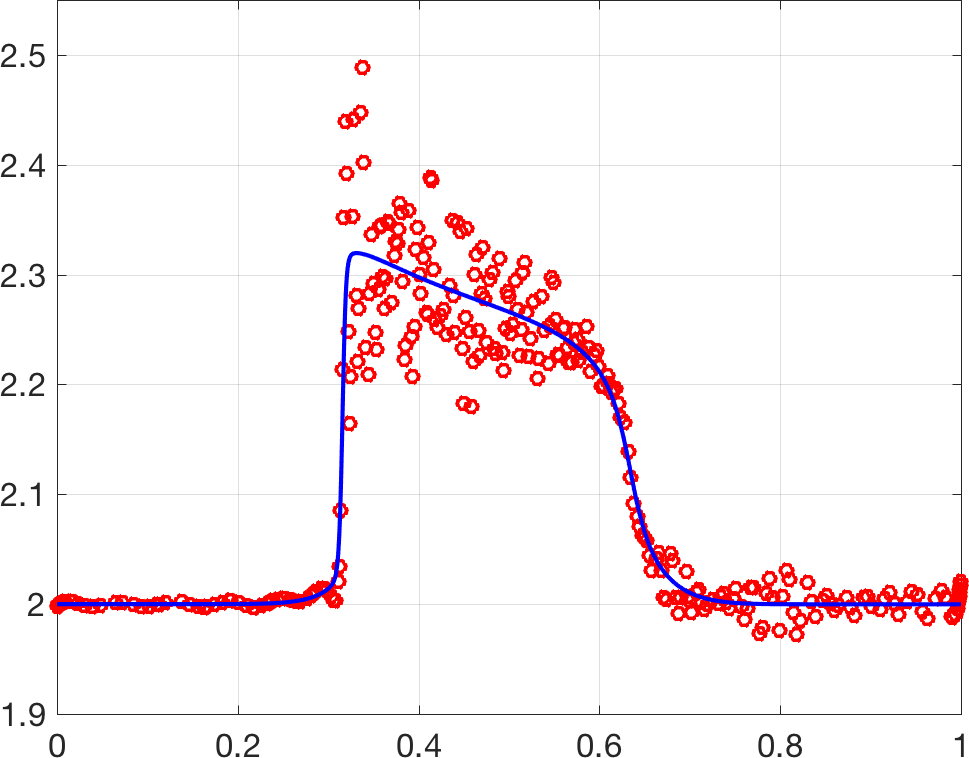}}
\caption{Convective entropy contribution $\LRb{\bm{v}_N^T \bm{V}_h^T\LRp{\bm{Q}_h \circ \bm{F}}\bm{1}}$ over time and reduced order solution with viscosity parameter $\epsilon = 0$ at $T=.75$.  The solution does not blow up despite the presence of large oscillations resulting from the shock.}
\label{fig:novisc}
\end{figure}

Next, we examine the difference in the hyper-reduced treatments of entropy dissipation (\ref{eq:visc1}), (\ref{eq:visc2}), and (\ref{eq:visc3}) described in Section~\ref{sec:diss} (recall that (\ref{eq:visc3}) is not provably entropy dissipative).  If the discrete entropy dissipation $\bm{v}_N^T\bm{d}(\bm{u}_N)$ is positive, then the simulation is entropy stable.  Figure~\ref{fig:entropydiss}  shows the computed entropy dissipation over the time interval $[0,.75]$.  All hyper-reduced treatments produce similar results, with the entropy dissipation produced by the naive approximation (\ref{eq:visc3}) differing most significantly.  The naive approximation differs from the provably entropy stable approximations (\ref{eq:visc1}) and (\ref{eq:visc2}) by about $O\LRp{10^{-1}}$ to $O\LRp{10^{-2}}$, while (\ref{eq:visc1}) differs from (\ref{eq:visc2}) by $O\LRp{10^{-6}}$.  

\begin{figure}[!h]
\centering
\subfloat[Entropy dissipation for 25 modes]{\includegraphics[width=.45\textwidth]{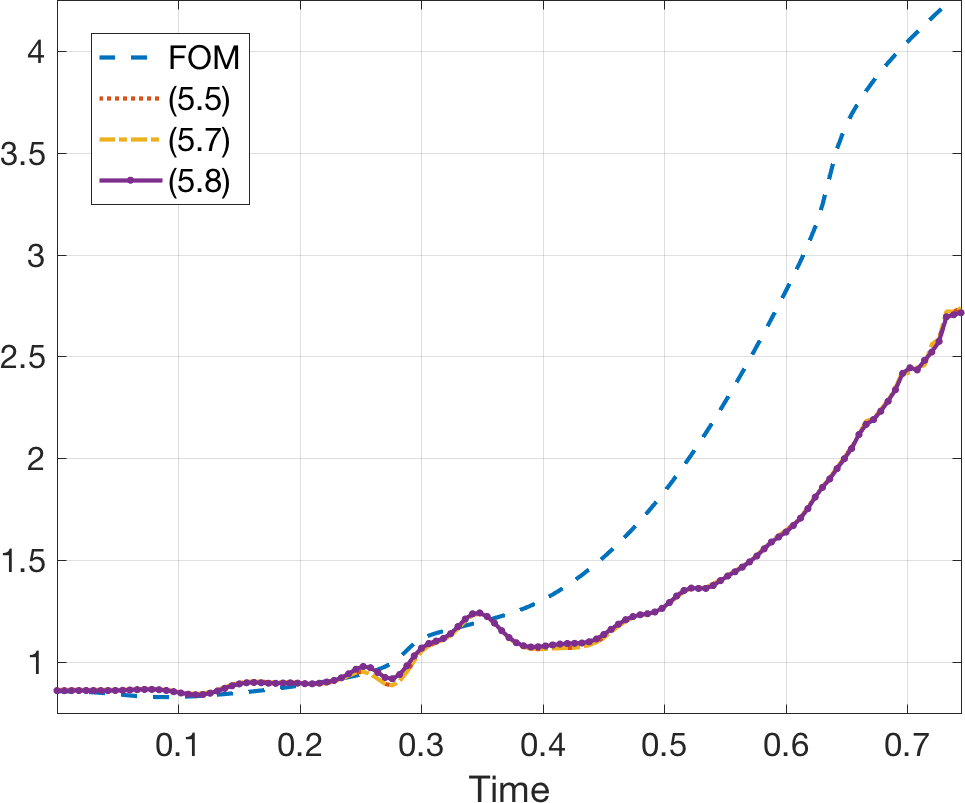}}
\hspace{2em}
%\subfloat[Difference, 25 modes]{\includegraphics[width=.32\textwidth]{viscdiff25modes.png}}
\subfloat[Entropy dissipation for 75 modes]{\includegraphics[width=.45\textwidth]{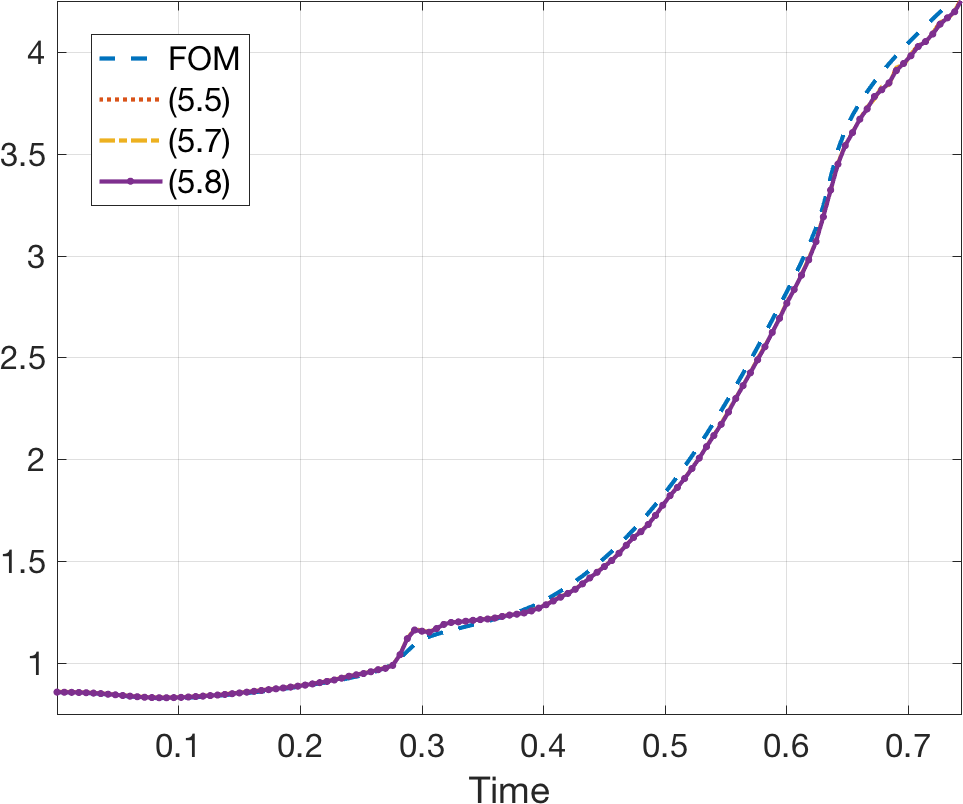}}
\caption{Discrete entropy dissipation $\bm{v}_N^T\bm{d}(\bm{u}_N)$ computed using hyper-reduced treatments of viscous terms (\ref{eq:visc1}), (\ref{eq:visc2}), and (\ref{eq:visc3}) for 25 and 75 modes.}
\label{fig:entropydiss}
\end{figure}

We now compare the evolution of both the discrete entropy \rnote{and the discrete $L^2$ error between all solution components of the full and reduced order models}, and examine the effect of hyper-reduction on the discrete solution in Figure~\ref{fig:entropyevo}.  \rnote{Here, the discrete $L^2$ norm is defined as $\nor{\bm{u}}^2 = \Delta x^2 \bm{u}^T\bm{u}$ in 1D or $\nor{\bm{u}}^2 = (\Delta x \Delta y)^2 \bm{u}^T\bm{u}$ in 2D, and is the analogue of the continuous $L^2$ norm evaluated at discretization points for the full order model.}
Both error and entropy are computed on the original grid of the full order model (e.g., without hyper-reduction), and the error is computed as the discrete relative $L^2$ error over all the conservative variables.  We observe that the evolution of the average discrete entropy for the reduced model rapidly approaches the average discrete entropy for the full order model as the number of modes increases from 25 to 75.  For 125 modes, the average entropies for the ROM and FOM are indistinguishable and are not shown.  The errors behave similarly, decreasing as the number of modes increases.  For reference, we also include errors computed without hyper-reduction using (\ref{eq:esrom}).  We observe that the hyper-reduced errors are virtually identical to errors without hyper-reduction in all cases.

\begin{figure}[!h]
\centering
\subfloat[Error between ROM and FOM]{\includegraphics[height=.282\textheight]{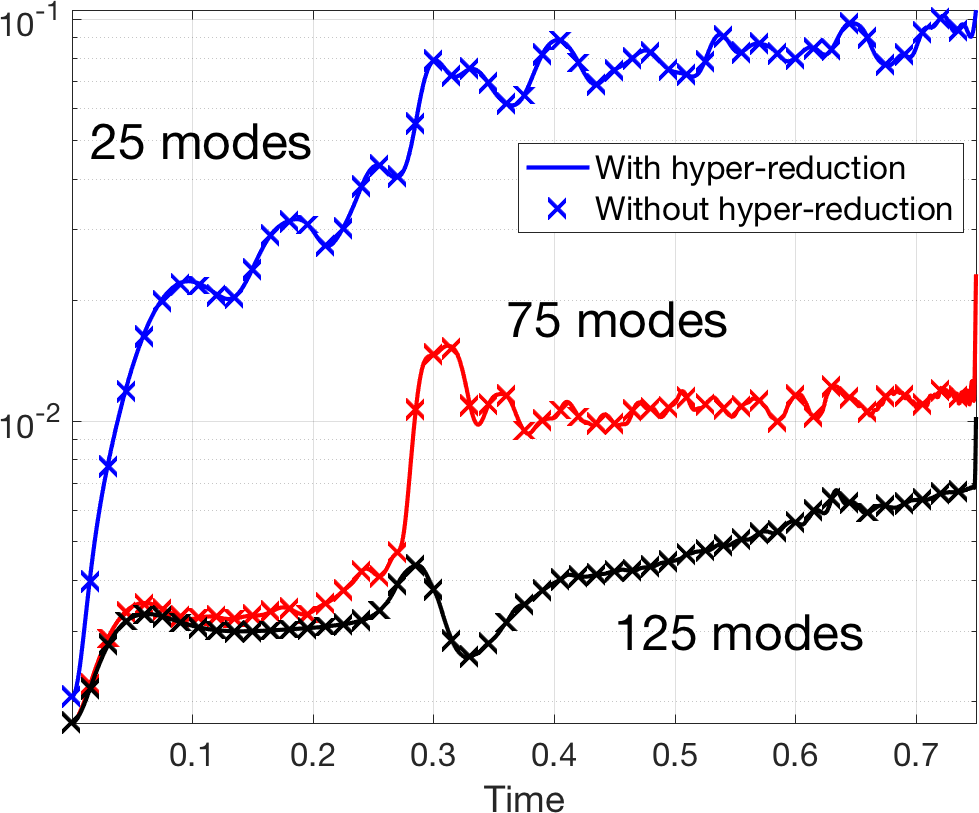}}
\hspace{2em}
\subfloat[Entropy over time]{\includegraphics[height=.285\textheight]{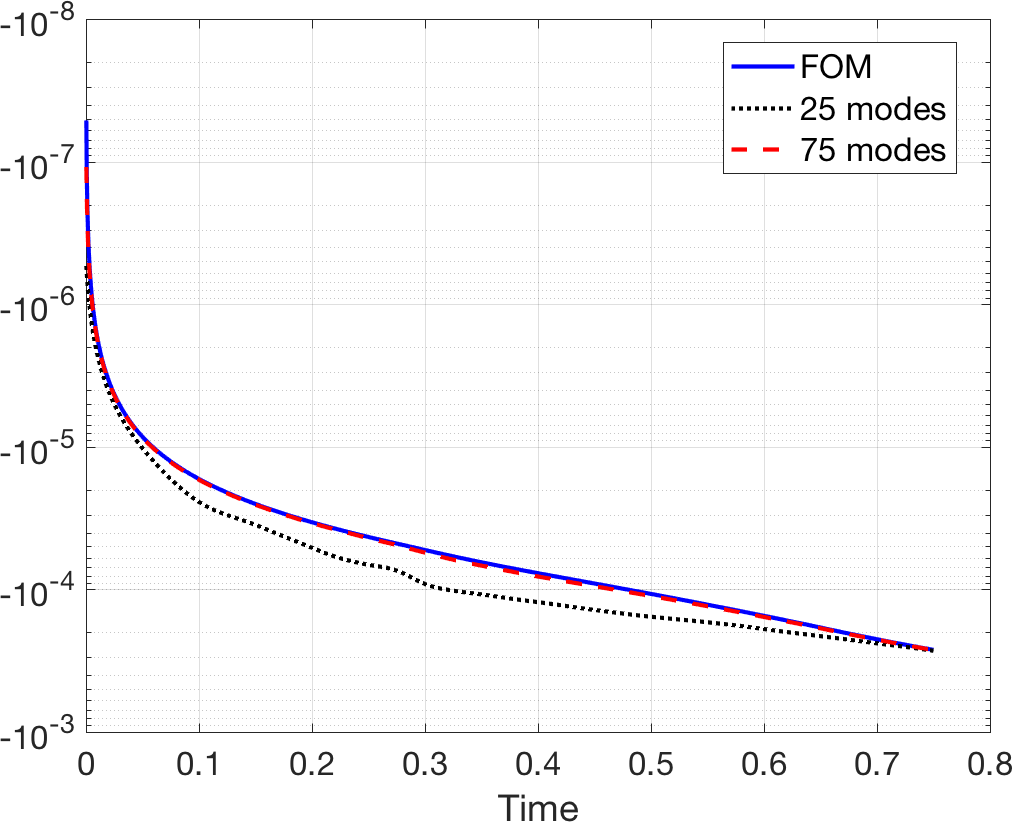}}
%\hspace{.1em}
%\subfloat[Difference, 25 modes]{\includegraphics[width=.32\textwidth]{viscdiff25modes.png}}
%\subfloat[Entropy 75 modes]{\includegraphics[width=.32\textwidth]{euler1Dwall_entropy_75modes.png}}
\caption{Evolution of error (with and without hyper-reduction) and entropy over time interval $[0,.75]$.}
\label{fig:entropyevo}
\end{figure}

%\note{Finally, we investigate the role of viscosity.  While the reduced order models constructed in this paper are entropy stable (or conservative) even for the inviscid regime, accuracy suffers as the viscosity coefficient decreases.  We compute solutions using the same Gaussian pulse initial condition for $\epsilon = 2e-4$, $\epsilon = 1e-3$, and $\epsilon = 0$.  For $\epsilon = 0$, we observe that the error does not converge as the number of modes increases.  
%We note that this degradation in accuracy appears to be related to the ratio of viscosity and grid spacing $\Delta x$, rather than simply the absolute magnitude of the viscosity coefficient $\epsilon$.  }

%Next, we examine 

%\subsection{1D Euler equations with wall boundary conditions}
%\subsection{Accuracy of entropy projection}

%\subsection{2D periodic Euler equations}

\subsection{2D Euler equations}

We now study the Euler equations in 2D.  We begin by considering periodic boundary conditions on the domain $[-1,1]^2$ and a smoothed version of the Kelvin-Helmholtz instability.  The initial condition is adapted from \cite{munz1989numerical, maboudi2018conservative}
\begin{gather*}
%a = .1; sig = .1; %5*sqrt(2)*1e-3;
%ff = 1./(1+exp(-(y+.5)./(sig.^2)))-1./(1+exp(-(y-.5)./(sig.^2)));
\rho = 1 + \frac{1}{1+e^{-(y+1/2)/\sigma^2}} - \frac{1}{1+e^{-(y-1/2)/\sigma^2}}, \qquad
u =  \frac{1}{1+e^{-(y+1/2)/\sigma^2}} - \frac{1}{1+e^{-(y-1/2)/\sigma^2}} - \frac{1}{2}, \\
v = \alpha\sin(2\pi x) \LRp{e^{-(y+1/2)/\sigma^2} - e^{-(y-1/2)/\sigma^2}},\qquad
p = 2.5.
\end{gather*}
In the following experiments, we set $\alpha = .1$ and $\sigma = .1$.  

\begin{figure}
\centering
\includegraphics[width=.4\textwidth]{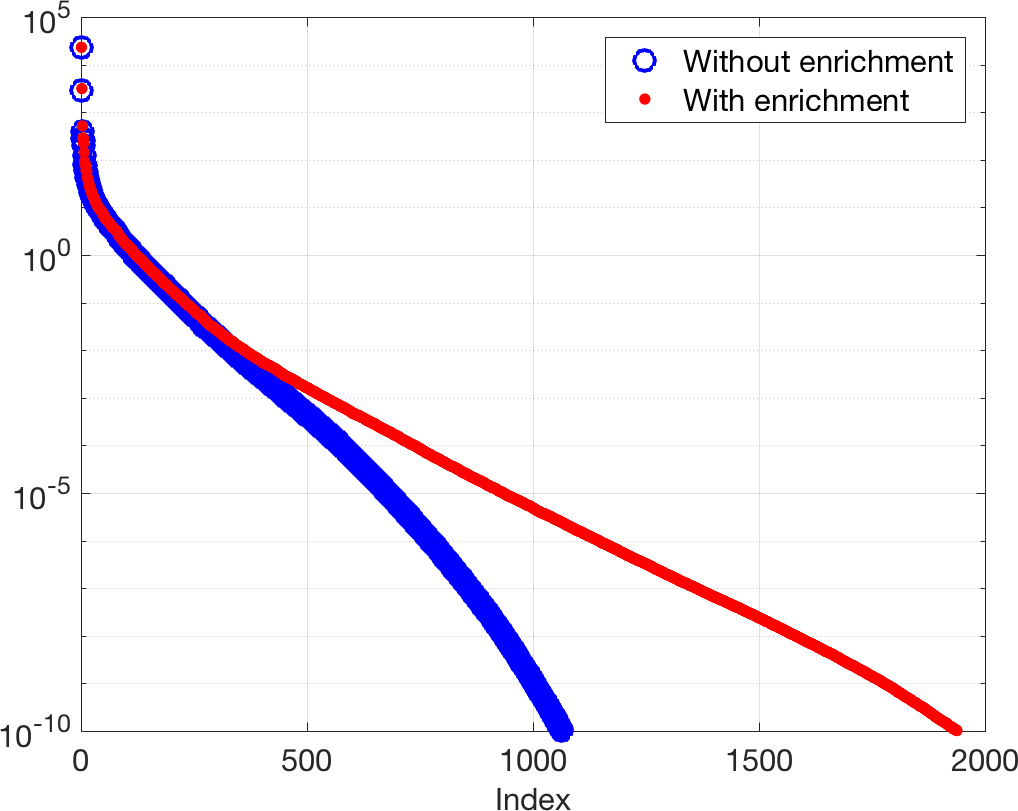}
\caption{Decay of singular values for snapshots of the 2D smoothed Kelvin-Helmholtz instability.}
\label{fig:khsvd}
\end{figure}
We utilize a full order model with $200\times 200$ grid ($40000$ cells), which is run until time $T = 3$.  We generate 301 solution snapshots with which to compute POD modes.  The decay of the singular values of the solution snapshots with and without enrichment by the entropy variables is shown in Figure~\ref{fig:khsvd}.  We observe that, compared to the 1D case, the singular values decay more slowly when enriched with snapshots of the entropy variables.  However, as in 1D, the differences between the singular values with and without entropy variable enrichment are significantly larger for lower energy modes as compared to higher energy modes.  

%\note{Plot singular value decay, etc}
\begin{figure}
\centering
\subfloat[Full order model]{\includegraphics[width=.32\textwidth]{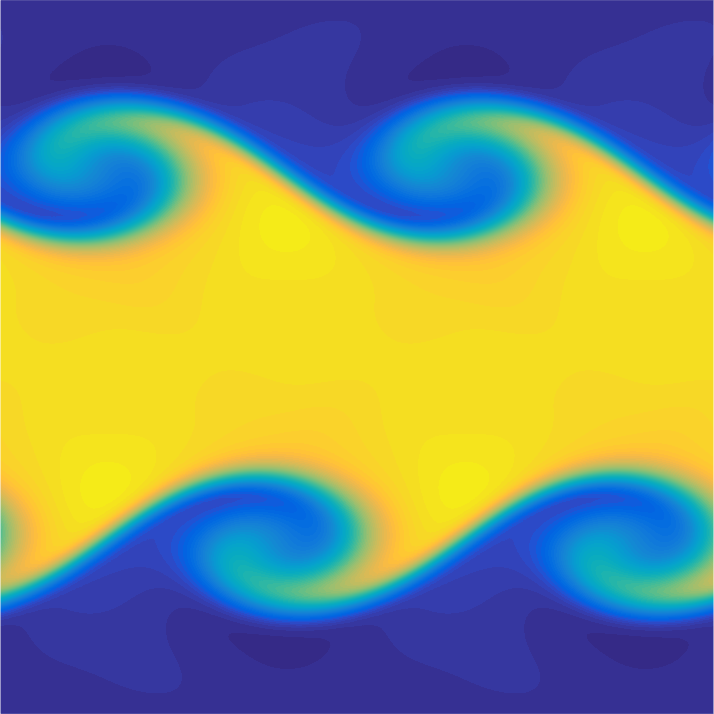}}
\hspace{.175em}
\subfloat[Reduced order model, 75 modes]{\includegraphics[width=.32\textwidth]{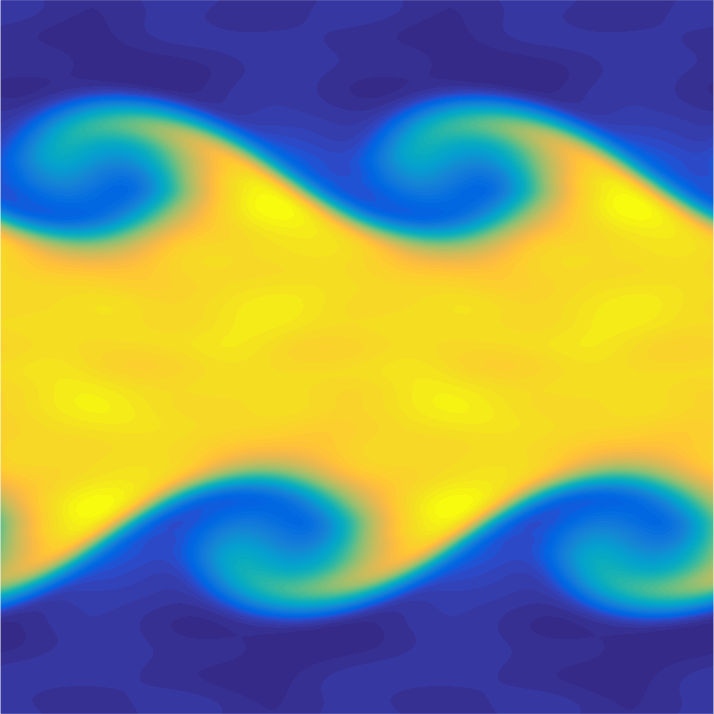}}
\hspace{.1em}
\subfloat[Hyper-reduced points]{\includegraphics[width=.32\textwidth]{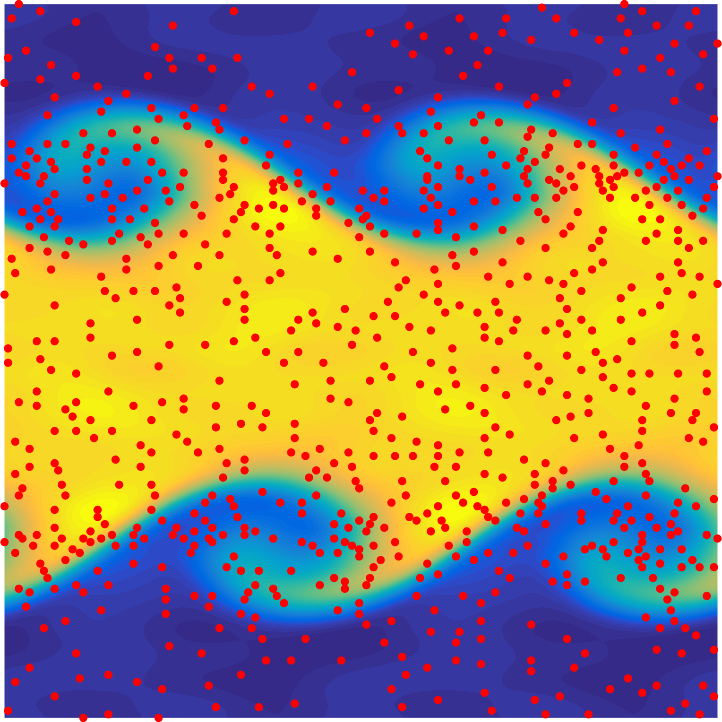}}
\caption{Full and reduced order density for the 2D smoothed Kelvin-Helmholtz instability at $T = 3$ with $\epsilon = 1e-3$.  The reduced order model uses 75 modes and $884$ hyper-reduced points (in \textcolor{red}{red}).  The relative $L^2$ error is $.0102$.}
\label{fig:khrom}
\end{figure}
The reduced order model utilizes $75$ POD modes to achieves a $1.02\%$ relative $L^2$ error.  We note that the difference between the 75th singular value of the snapshot matrix with entropy variable enrichment is only about $6\%$ larger than the 75th singular value without entropy variable enrichment.  This suggests that entropy variable enrichment does not significantly impact the accuracy of the 75-mode POD approximation.  

Since little difference is observed between the hyper-reducted treatments of viscosity, the naive treatment (\ref{eq:visc3}) is used.  No additional stabilizing points were necessary for this setting, as the condition numbers of the hyper-reduced $x$ and $y$ test mass matrices were both $O(1)$.  Figure~\ref{fig:khrom} compares results for the full and reduced order models, and shows the hyper-reduced points selected by the greedy empirical cubature algorithm.  

We next consider a 2D domain with wall boundary conditions.  Entropy stable reflective wall boundary conditions are again imposed using ``mirror states''.   Let $\bm{n}$ denote the outward normal at a wall, and let $\bm{u}_{\bm{n}} = un_x + vn_y$ and $\bm{u}_{\bm{n}^\perp} = un_y - vn_x$ denote the normal and tangential components of the velocity at the wall.  We use a boundary numerical flux $\bm{f}^* = \bm{f}_S\LRp{\bm{u}^+,\bm{u}}$ augmented with local Lax-Friedrichs penalization, where the exterior state $\bm{u}^+$ is defined by
\[
\rho^+ = \rho, \qquad \bm{u}_{\bm{n}}^+ = -\bm{u}_{\bm{n}}, \qquad \bm{u}_{\bm{n}^\perp}^+ = \bm{u}_{\bm{n}^\perp}, \qquad p^+ = p.  
\]
We set a Gaussian pulse initial condition 
\[
\rho = 1 + e^{-50\LRp{x^2+(y+1/2)^2}}, \qquad \bm{u} = \bm{0}, \qquad p = \rho^{\gamma}.  
\]
The full order model is computed on a $150\times 150$ grid with viscosity coefficient $\epsilon = 1e-3$.  The POD basis is computed from 75 solution snapshots with a CFL of $.5$.  Figure~\ref{fig:pulse2dsvd} shows the decay of the singular values with and without entropy variable enrichment.  As before, we observe that the singular values decay more slowly under entropy variable enrichment, but that the difference in the singular values with and without enrichment is more evident for low energy modes.  The 25th singular value with entropy variable enrichment is less than $5\%$ larger than the 25th singular value without entropy variable enrichment, suggesting that entropy variable enrichment does not significantly affect the accuracy of the 25-mode POD approximation.  

\begin{figure}
\centering
\includegraphics[width=.4\textwidth]{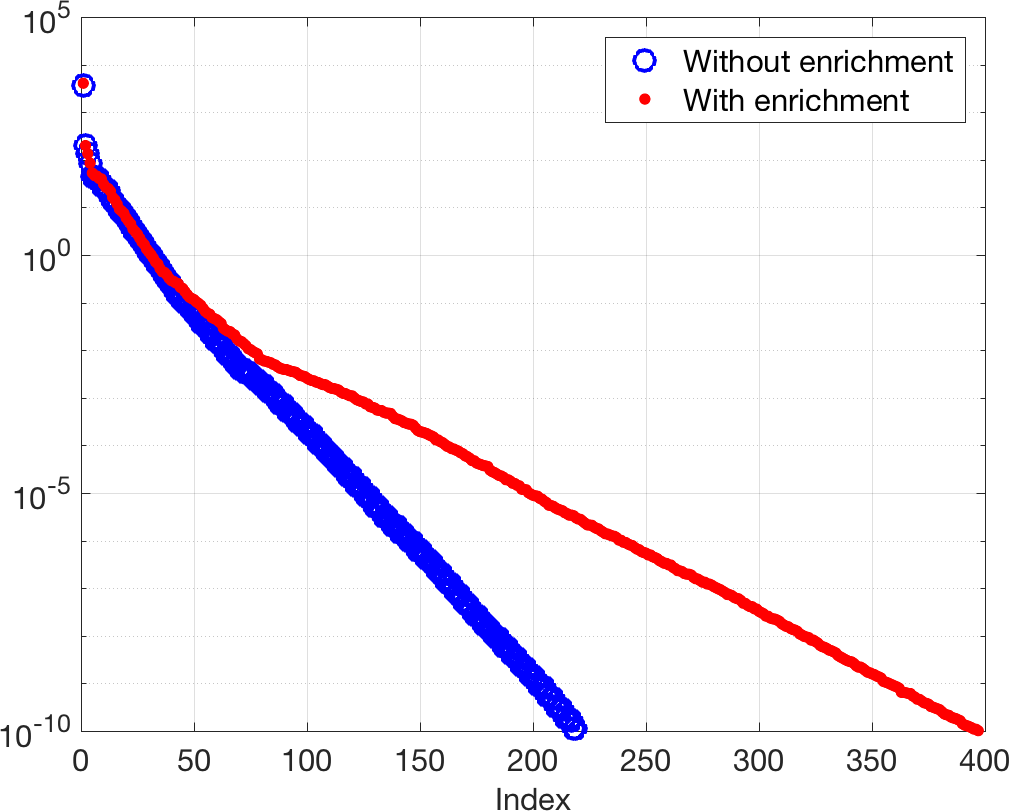}
\caption{Decay of singular values for snapshots of the 2D Gaussian pulse problem.}
\label{fig:pulse2dsvd}
\end{figure}

A 25-mode reduced order solution is shown in Figures~\ref{fig:pulse2d}.  The hyper-reduction algorithm adds one single stabilizing point to reduce the condition number of the $x$-coordinate test mass matrix from $O(10^{8})$ to approximately $5.71$.  The solution does not form shock discontinuities, and is chosen instead to test the imposition of boundary conditions.  We note that, due to the nature of the linear programming software used to compute the boundary weights, the constraints (\ref{eq:sbpconstraints}) on the boundary weights $\bm{w}_b$ are imposed to a tolerance of $5e-8$.  Despite this inexact enforcement of constraints, the computed entropy RHS remained small, oscillating around $10^{-11}$ in the absence of entropy dissipative terms.  

\begin{figure}
\centering
%\subfloat[Full order model]{\includegraphics[width=.32\textwidth]{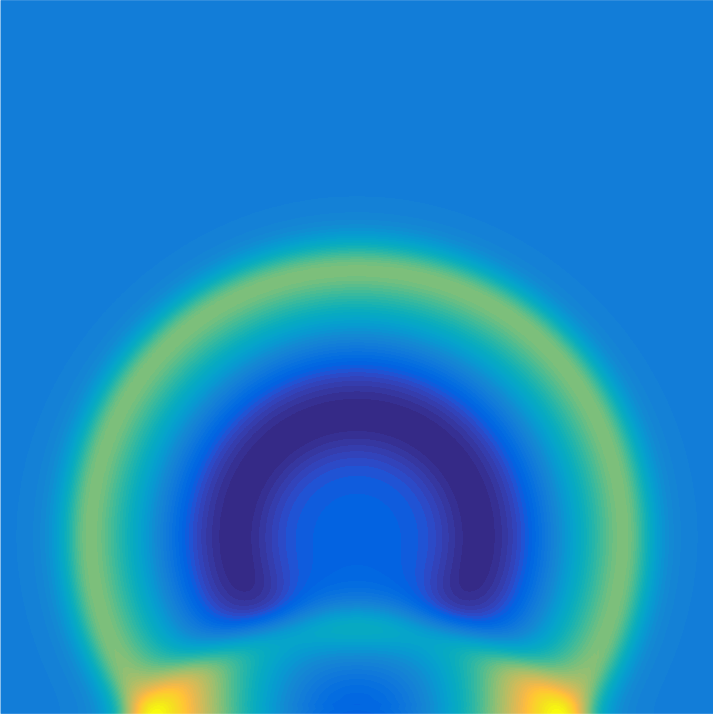}}
%\hspace{.1em}
%\subfloat[Reduced order model]{\includegraphics[width=.32\textwidth]{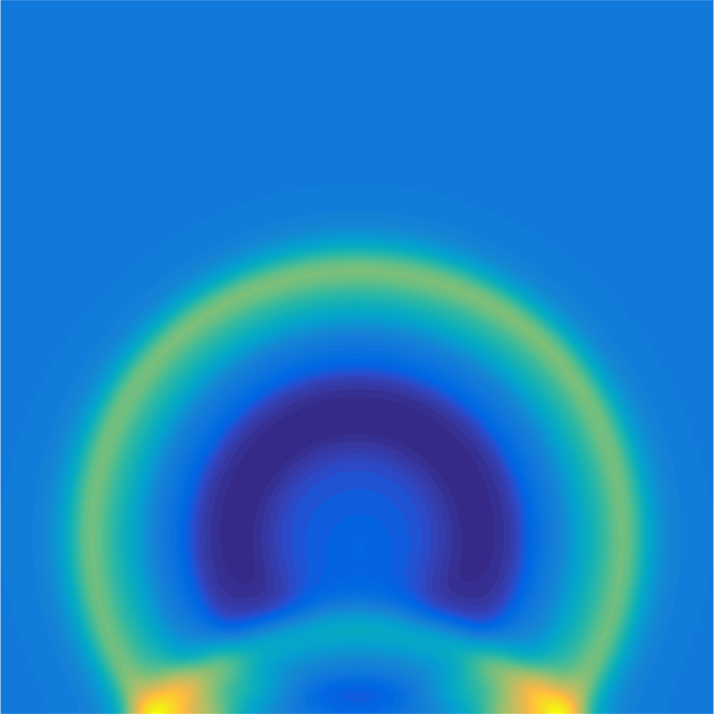}}
%\hspace{.1em}
%\subfloat[Hyper-reduction points]{\includegraphics[width=.32\textwidth]{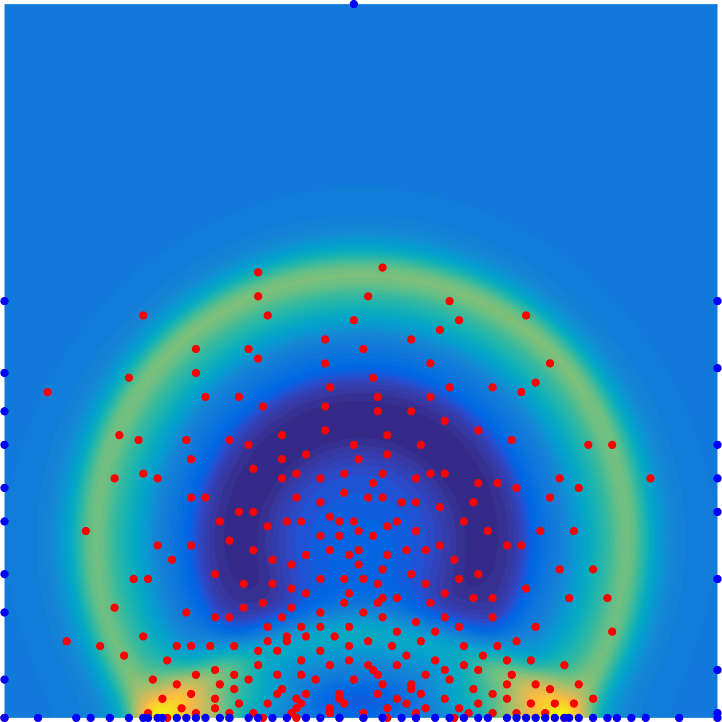}}
%\caption{Density computed for $\epsilon = 1e-3$ using 20 modes, 194 hyper-reduced points (in \textcolor{red}{red}), and 68 hyper-reduced boundary points (in \textcolor{blue}{blue}).  The relative $L^2$ error is $.0088005$.}
\subfloat[Full order model]{\includegraphics[width=.32\textwidth]{pulse2d.png}}
\hspace{.1em}
\subfloat[Reduced order model]{\includegraphics[width=.32\textwidth]{pulse2d_ROM.png}}
\hspace{.1em}
\subfloat[Hyper-reduction points]{\includegraphics[width=.32\textwidth]{pulse2d_ROM_pts.png}}
\caption{Full and reduced order density at time $T = .25$ computed with $\epsilon = 1e-3$.  The reduced order model uses 25 modes, 300 hyper-reduced points (in \textcolor{red}{red}), and 66 hyper-reduced boundary points (in \textcolor{blue}{blue}).  The relative $L^2$ error is $.0071$.}
\label{fig:pulse2d}
\end{figure}

\subsubsection{2D Riemann problem}

We finally consider a 2D Riemann problem to examine the stability of entropy stable ROMs for under-resolved shock solutions. Because typical ``natural'' boundary conditions are not entropy stable \cite{chen2017entropy}, we modify the problem to use periodic boundary conditions \cite{chan2017discretely} as shown in Figure~\ref{fig:riemann}.  The FOM is run until final time $T=.25$ on a $200\times 200$ grid with a CFL of $.25$ and viscosity coefficient $\epsilon = 5e-3$.  The initial conditions are taken from \cite{lax1998solution, kurganov2002solution} and are smoothed by applying a 3-point average 5 times in each coordinate direction.  In contrast to the Kelvin-Helmholtz instability and pulse problems, the singular values of the entropy variable enriched snapshots decay noticeably more slowly than the singular values of the non-enriched solution snapshots. Figure~\ref{fig:riemann2dsvd} shows the decay of the snapshot singular values along with a zoomed in view. It can be observed that the enriched and non-enriched snapshot singular values begin to differ after the first 20 terms. 

\begin{figure}
\centering
\subfloat[Singular values]{\includegraphics[height=.24\textheight]{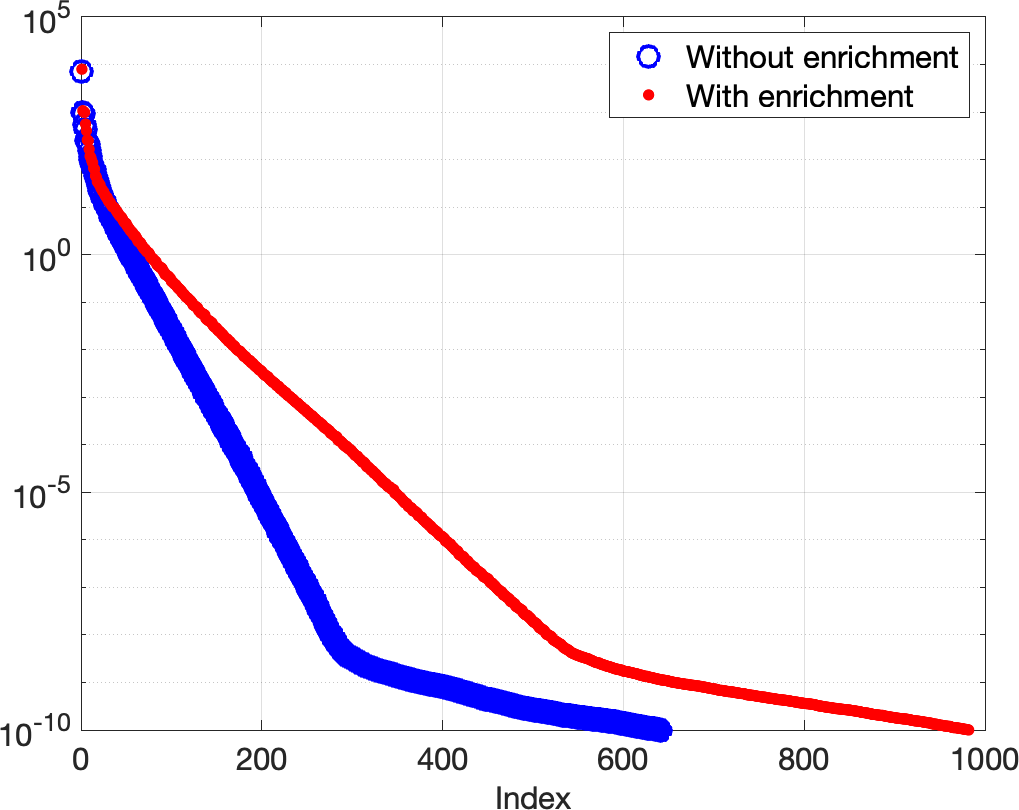}}
\hspace{1.5em}
\subfloat[Zoomed view]{\includegraphics[height=.24\textheight]{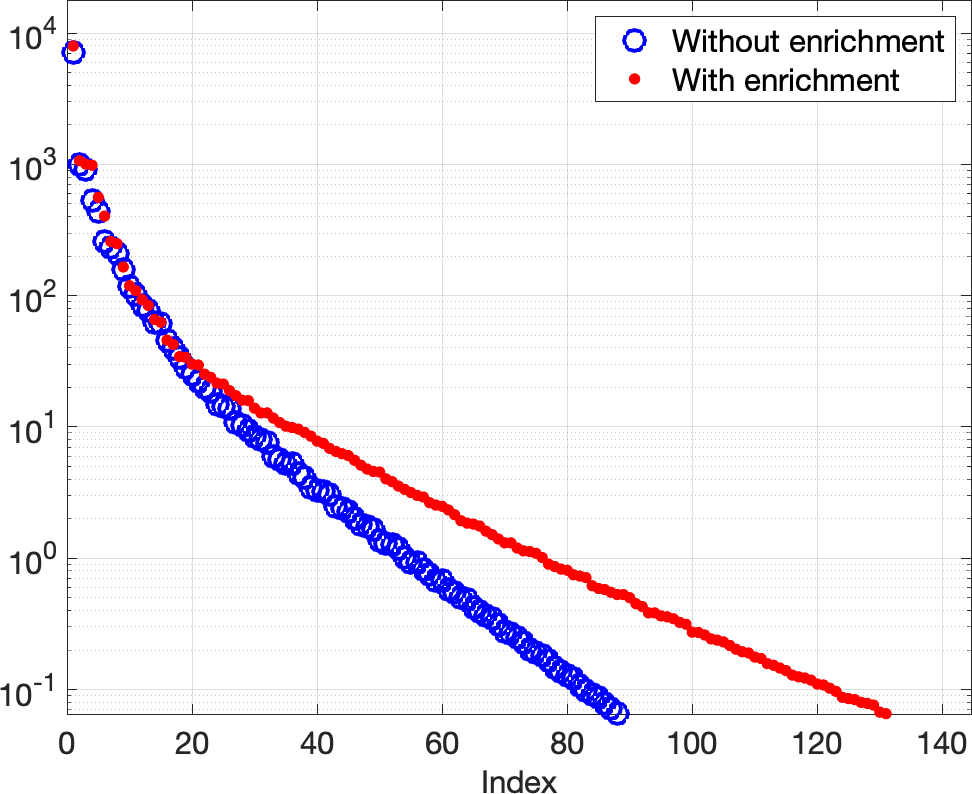}}
\caption{Decay of singular values for snapshots for the 2D Riemann problem.}
\label{fig:riemann2dsvd}
\end{figure}

The ROM uses $50$ modes and $812$ hyper-reduced points, and achieves a final $L^2$ error of $0.03278$.  Despite being highly under-resolved with oscillations trailing the shock, the ROM runs stably. These oscillations are the result of approximating a traveling shock using a linear POD subspace, and are not signs of instability within the numerical scheme. The solution appears reasonably well-approximated outside of the domain over which the shock propagates. 

\begin{figure}
\centering
\subfloat[FOM results]{\includegraphics[width=.32\textwidth]{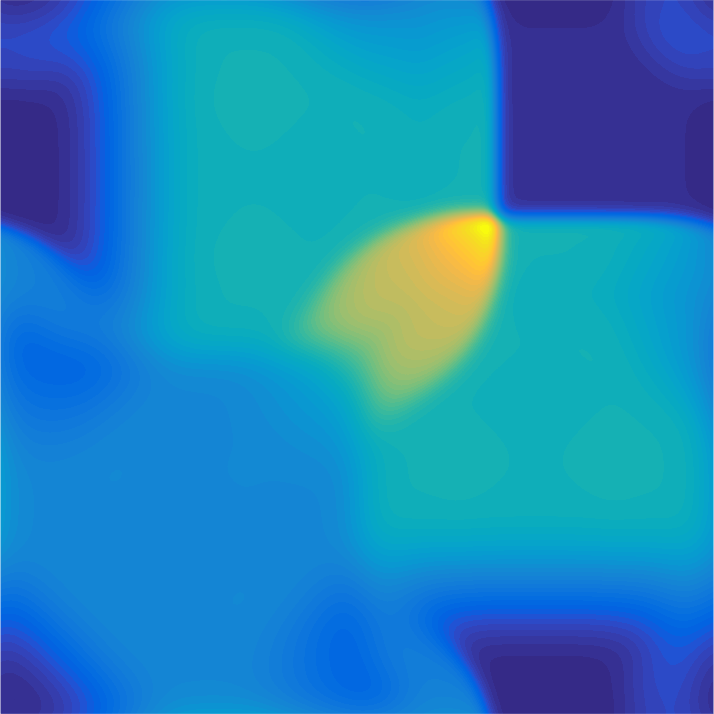}}
\hspace{.25em}
\subfloat[ROM results]{\includegraphics[width=.32\textwidth]{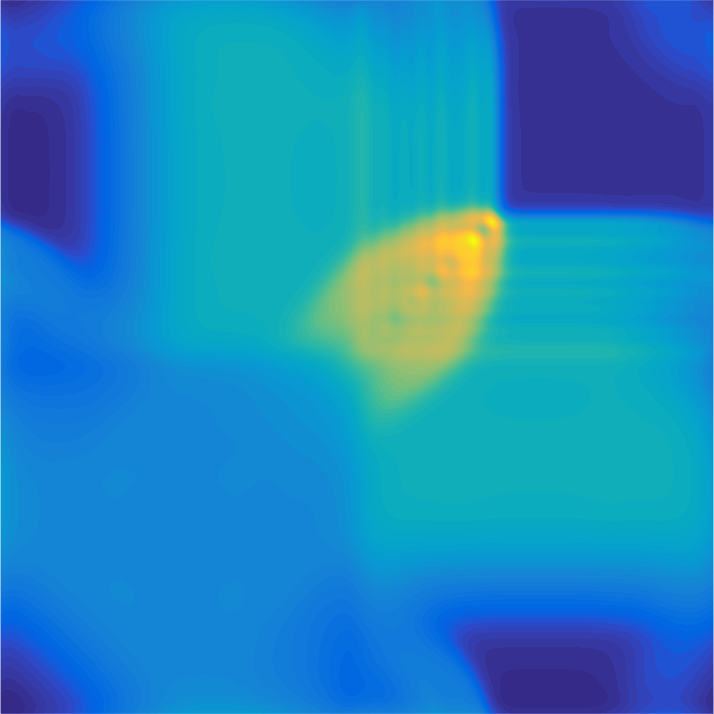}}
\hspace{.25em}
\subfloat[Hyper-reduced points]{\includegraphics[width=.32\textwidth]{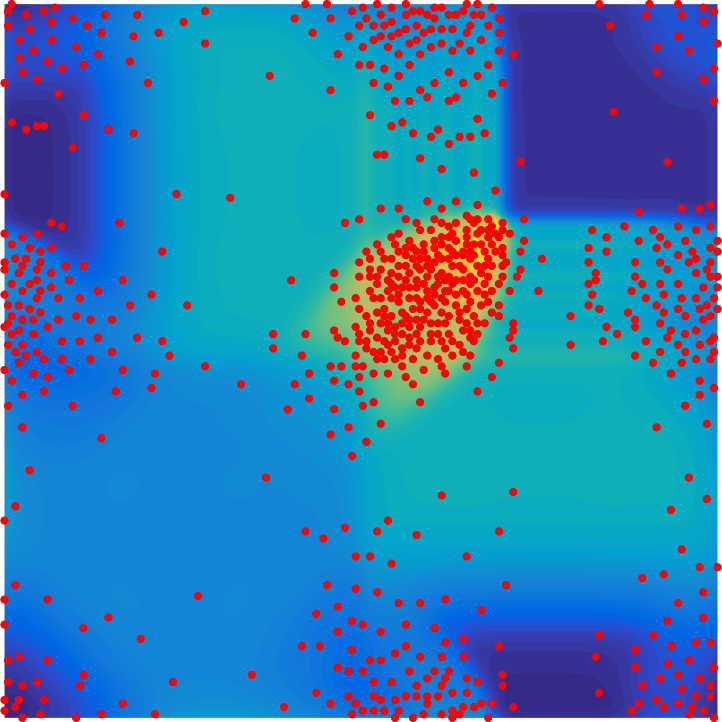}}
\caption{Comparison of ROM and FOM behavior for the 2D Riemann problem. The reduced order model uses 50 modes and 812 hyper-reduced points (in \textcolor{red}{red}).  The relative $L^2$ error is $.03278$.}
\label{fig:riemann}
\end{figure}

\subsection{\bnote{On computational performance}}

\bnote{We note that the entropy stable reduced order models implemented in this work do not reduce computational cost compared to the original full order models. This is due to explicit time-stepping, which is used in this paper to validate the proposed semi-discrete formulation.  However, we expect that for implicit time-stepping, entropy stable ROMs will see more significant efficiency gains.  

The cost of explicit time-stepping schemes scales with the cost of the ODE right hand side evaluation, and the cost of a right hand side evaluation for an entropy stable scheme scales with the number of entropy conservative flux evaluations required.  The number of entropy conservative flux evaluations is the same as the number of nonzero entries in the matrix $\bm{Q}^i$, since the nonlinear term is $\LRp{\bm{Q}^i\circ\bm{F}^i}\bm{1}$ and the entries of the matrix $\bm{F}^i$ (which are flux evaluations between different states) are evaluated on the fly.  

For the 2D full order methods in this paper, the differentiation matrices $\bm{Q}^i$ are sparse with $O(K^2)$ non-zero entries.  Thus, the cost of a 2D entropy stable finite volume method is $O(K^2)$ nonlinear flux evaluations for each coordinate direction.  For a reduced order model, however, the hyper-reduced matrices $\bm{Q}^i_t$ are dense, and evaluating $(\bm{Q}^i_t \circ \bm{F}^i)\bm{1}$  requires $O(N_s^2)$ flux evaluations where $N_s$ is the number of hyper-reduced points.  As an example, consider the Kelvin-Helmholtz results in Figure~\ref{fig:khrom}.  The full order model requires roughly $40000$ flux evaluations.  However, because the ROM has $884$ hyper-reduced points, roughly $781456$ flux evaluations are required. The result is a ROM which is more expensive per time-step than the original full order model.  

While the proposed ROMs could still provide savings for sufficiently large full order models, this increased cost is a significant issue for explicit time-stepping.  The cost is offset slightly if the ROM has a larger CFL \cite{marley2015reduced}; however, this situation only occurs if a smaller number of modes are used to simulate the solution.  The situation changes for implicit time-stepping, as full order models require multiple solutions of large (but sparse) linear systems at each time-step.  For a $K\times K$ grid, these linear systems would require $O(K^2)$ flux evaluations to assemble, but we expect the solution of the linear system to be the dominant cost.  For a ROM using implicit time-stepping, the assembly of the linear system still requires $O(N_s^2)$ flux evaluations, but the size of the system scales with the number of modes, which is notably smaller than even the number of hyper-reduced points.  For example, the Kelvin-Helmholtz example uses only $75$ basis functions per component, compared to $884$ hyper-reduced points.  We thus expect that the use of entropy stable ROMs will result in a more significant speedup under implicit time-stepping, especially when combined with efficient methods for computing Jacobian matrices \cite{chan2020explicit}.  }

\section{Conclusion}  We have presented a methodology for constructing projection-based reduced order models for nonlinear conservation laws by combining a reduced basis, a modified Galerkin projection, and tailored hyper-reduction techniques.  The main novelty of these new reduced models is the approximation of the nonlinear convective term, which combines a ``flux differencing'' approach with an appropriate hyper-reduced approximation of the differentiation matrix.

Future work will aim to address computational costs associated with entropy stable reduced models.  Unlike standard hyper-reduction techniques, the number of nonlinear evaluations necessary scales with $O(N_s^2)$ rather than $O(N_s)$, where $N_s$ is the number of hyper-reduced sampling points.  This discrepancy is due to the fact that the hyper-reduction presented here approximates a nonlinear matrix, rather than a nonlinear vector.  These costs can be reduced by combining domain decomposition \cite{lucia2003reduced} and a discontinuous Galerkin-type discretization \cite{chan2017discretely} to produce multi-domain reduced order models.  Suppose there are $k = 1,\ldots, K$ subdomains.  Then, a multi-domain reduced order model is expected to decrease costs if the sum of the squares of the number of subdomain sampling points $\sum_k (N_s^k)^2$ is significantly smaller than the global number of sampling points $N_s^2$.  Future work will also investigate implicit time-stepping for entropy stable reduced models, as additional costs associated with the spatial formulation may be offset by the reduction in cost for solving smaller matrix systems during the linearization process.

\section{Acknowledgments}

Jesse Chan gratefully acknowledges support from the National Science Foundation under awards DMS-1719818, DMS-1712639, and DMS-CAREER-1943186. Jesse Chan also thanks Matthias Heinkenschloss, Masayuki Yano, Matthew Zahr, and Irina Tezuar for informative discussions.

\bibliographystyle{unsrt}
\bibliography{refs2}

\begin{thebibliography}{10}

\bibitem{benner2015survey}
Peter Benner, Serkan Gugercin, and Karen Willcox.
\newblock A survey of projection-based model reduction methods for parametric
  dynamical systems.
\newblock {\em SIAM review}, 57(4):483--531, 2015.

\bibitem{cagniart2019model}
Nicolas Cagniart, Yvon Maday, and Benjamin Stamm.
\newblock Model order reduction for problems with large convection effects.
\newblock In {\em Contributions to Partial Differential Equations and
  Applications}, pages 131--150. Springer, 2019.

\bibitem{lall2003structure}
Sanjay Lall, Petr Krysl, and Jerrold~E Marsden.
\newblock Structure-preserving model reduction for mechanical systems.
\newblock {\em Physica D: Nonlinear Phenomena}, 184(1-4):304--318, 2003.

\bibitem{carlberg2015preserving}
Kevin Carlberg, Ray Tuminaro, and Paul Boggs.
\newblock {Preserving Lagrangian structure in nonlinear model reduction with
  application to structural dynamics}.
\newblock {\em SIAM Journal on Scientific Computing}, 37(2):B153--B184, 2015.

\bibitem{benner2012interpolation}
Peter Benner and Tobias Breiten.
\newblock {Interpolation-Based $\mathcal{H}_{2}$-Model Reduction of Bilinear
  Control Systems}.
\newblock {\em SIAM Journal on Matrix Analysis and Applications},
  33(3):859--885, 2012.

\bibitem{gugercin2012structure}
Serkan Gugercin, Rostyslav~V Polyuga, Christopher Beattie, and Arjan Van
  Der~Schaft.
\newblock {Structure-preserving tangential interpolation for model reduction of
  port-Hamiltonian systems}.
\newblock {\em Automatica}, 48(9):1963--1974, 2012.

\bibitem{peng2016symplectic}
Liqian Peng and Kamran Mohseni.
\newblock {Symplectic model reduction of Hamiltonian systems}.
\newblock {\em SIAM Journal on Scientific Computing}, 38(1):A1--A27, 2016.

\bibitem{chaturantabut2016structure}
Saifon Chaturantabut, Chris Beattie, and Serkan Gugercin.
\newblock {Structure-preserving model reduction for nonlinear port-Hamiltonian
  systems}.
\newblock {\em SIAM Journal on Scientific Computing}, 38(5):B837--B865, 2016.

\bibitem{gong2017structure}
Yuezheng Gong, Qi~Wang, and Zhu Wang.
\newblock {Structure-preserving Galerkin POD reduced-order modeling of
  Hamiltonian systems}.
\newblock {\em Computer Methods in Applied Mechanics and Engineering},
  315:780--798, 2017.

\bibitem{afkham2017structure}
Babak~Maboudi Afkham and Jan~S Hesthaven.
\newblock {Structure preserving model reduction of parametric Hamiltonian
  systems}.
\newblock {\em SIAM Journal on Scientific Computing}, 39(6):A2616--A2644, 2017.

\bibitem{afkham2018structure}
Babak~Maboudi Afkham and Jan~S Hesthaven.
\newblock {Structure-preserving model-reduction of dissipative Hamiltonian
  systems}.
\newblock {\em Journal of Scientific Computing}, pages 1--19, 2018.

\bibitem{farhat2014dimensional}
Charbel Farhat, Philip Avery, Todd Chapman, and Julien Cortial.
\newblock Dimensional reduction of nonlinear finite element dynamic models with
  finite rotations and energy-based mesh sampling and weighting for
  computational efficiency.
\newblock {\em International Journal for Numerical Methods in Engineering},
  98(9):625--662, 2014.

\bibitem{farhat2015structure}
Charbel Farhat, Todd Chapman, and Philip Avery.
\newblock Structure-preserving, stability, and accuracy properties of the
  energy-conserving sampling and weighting method for the hyper reduction of
  nonlinear finite element dynamic models.
\newblock {\em International Journal for Numerical Methods in Engineering},
  102(5):1077--1110, 2015.

\bibitem{bui2007goal}
Tan Bui-Thanh, Karen Willcox, Omar Ghattas, and Bart van Bloemen~Waanders.
\newblock Goal-oriented, model-constrained optimization for reduction of
  large-scale systems.
\newblock {\em Journal of Computational Physics}, 224(2):880--896, 2007.

\bibitem{carlberg2013gnat}
Kevin Carlberg, Charbel Farhat, Julien Cortial, and David Amsallem.
\newblock {The GNAT method for nonlinear model reduction: effective
  implementation and application to computational fluid dynamics and turbulent
  flows}.
\newblock {\em Journal of Computational Physics}, 242:623--647, 2013.

\bibitem{maday2002blackbox}
Yvon Maday, Anthony~T. Patera, and Dimitrios~V. Rovas.
\newblock A blackbox reduced-basis output bound method for noncoercive linear
  problems.
\newblock In {\em Studies in Mathematics and its Applications}, pages 533--569.
  Elsevier, 2001.

\bibitem{rozza2007stability}
Gianluigi Rozza and Karen Veroy.
\newblock {On the stability of the reduced basis method for Stokes equations in
  parametrized domains}.
\newblock {\em Computer methods in applied mechanics and engineering},
  196(7):1244--1260, 2007.

\bibitem{serre2012reliable}
Gilles Serre, Philippe Lafon, Xavier Gloerfelt, and Christophe Bailly.
\newblock {Reliable reduced-order models for time-dependent linearized Euler
  equations}.
\newblock {\em Journal of Computational Physics}, 231(15):5176--5194, 2012.

\bibitem{amsallem2012stabilization}
David Amsallem and Charbel Farhat.
\newblock Stabilization of projection-based reduced-order models.
\newblock {\em International Journal for Numerical Methods in Engineering},
  91(4):358--377, 2012.

\bibitem{rozza2013reduced}
Gianluigi Rozza, DB~Phuong Huynh, and Andrea Manzoni.
\newblock {Reduced basis approximation and a posteriori error estimation for
  Stokes flows in parametrized geometries: roles of the inf-sup stability
  constants}.
\newblock {\em Numerische Mathematik}, 125(1):115--152, 2013.

\bibitem{ballarin2015supremizer}
Francesco Ballarin, Andrea Manzoni, Alfio Quarteroni, and Gianluigi Rozza.
\newblock {Supremizer stabilization of POD--Galerkin approximation of
  parametrized steady incompressible Navier--Stokes equations}.
\newblock {\em International Journal for Numerical Methods in Engineering},
  102(5):1136--1161, 2015.

\bibitem{carlberg2017galerkin}
Kevin Carlberg, Matthew Barone, and Harbir Antil.
\newblock {Galerkin v. least-squares Petrov--Galerkin projection in nonlinear
  model reduction}.
\newblock {\em Journal of Computational Physics}, 330:693--734, 2017.

\bibitem{wang2012proper}
Zhu Wang, Imran Akhtar, Jeff Borggaard, and Traian Iliescu.
\newblock Proper orthogonal decomposition closure models for turbulent flows: a
  numerical comparison.
\newblock {\em Computer Methods in Applied Mechanics and Engineering},
  237:10--26, 2012.

\bibitem{kalashnikova2014stabilization}
Irina Kalashnikova, Bart van Bloemen~Waanders, Srinivasan Arunajatesan, and
  Matthew Barone.
\newblock Stabilization of projection-based reduced order models for linear
  time-invariant systems via optimization-based eigenvalue reassignment.
\newblock {\em Computer Methods in Applied Mechanics and Engineering},
  272:251--270, 2014.

\bibitem{caiazzo2014numerical}
Alfonso Caiazzo, Traian Iliescu, Volker John, and Swetlana Schyschlowa.
\newblock A numerical investigation of velocity--pressure reduced order models
  for incompressible flows.
\newblock {\em Journal of Computational Physics}, 259:598--616, 2014.

\bibitem{balajewicz2016minimal}
Maciej Balajewicz, Irina Tezaur, and Earl Dowell.
\newblock {Minimal subspace rotation on the Stiefel manifold for stabilization
  and enhancement of projection-based reduced order models for the compressible
  Navier--Stokes equations}.
\newblock {\em Journal of Computational Physics}, 321:224--241, 2016.

\bibitem{barone2009stable}
Matthew~F Barone, Irina Kalashnikova, Daniel~J Segalman, and Heidi~K
  Thornquist.
\newblock {Stable Galerkin reduced order models for linearized compressible
  flow}.
\newblock {\em Journal of Computational Physics}, 228(6):1932--1946, 2009.

\bibitem{kalashnikova2010stability}
I~Kalashnikova and MF~Barone.
\newblock {On the stability and convergence of a Galerkin reduced order model
  (ROM) of compressible flow with solid wall and far-field boundary treatment}.
\newblock {\em International journal for numerical methods in engineering},
  83(10):1345--1375, 2010.

\bibitem{kalashnikova2014construction}
Irina Kalashnikova, Matthew~F Barone, Srinivasan Arunajatesan, and Bart~G van
  Bloemen~Waanders.
\newblock Construction of energy-stable projection-based reduced order models.
\newblock {\em Applied Mathematics and Computation}, 249:569--596, 2014.

\bibitem{maboudi2018conservative}
Babak Maboudi~Afkham, Nicolo Ripamonti, Qian Wang, and Jan~S Hesthaven.
\newblock Conservative model order reduction for fluid flow.
\newblock {\em To appear in MS\&A.}, 2019.

\bibitem{fisher2013discretely}
Travis~C Fisher, Mark~H Carpenter, Jan Nordstr{\"o}m, Nail~K Yamaleev, and
  Charles Swanson.
\newblock Discretely conservative finite-difference formulations for nonlinear
  conservation laws in split form: Theory and boundary conditions.
\newblock {\em Journal of Computational Physics}, 234:353--375, 2013.

\bibitem{an2008optimizing}
Steven~S An, Theodore Kim, and Doug~L James.
\newblock Optimizing cubature for efficient integration of subspace
  deformations.
\newblock {\em ACM transactions on graphics (TOG)}, 27(5):165, 2008.

\bibitem{hernandez2017dimensional}
Joaquin~Alberto Hernandez, Manuel~Alejandro Caicedo, and Alex Ferrer.
\newblock Dimensional hyper-reduction of nonlinear finite element models via
  empirical cubature.
\newblock {\em Computer methods in applied mechanics and engineering},
  313:687--722, 2017.

\bibitem{tadmor1987numerical}
Eitan Tadmor.
\newblock {The numerical viscosity of entropy stable schemes for systems of
  conservation laws. I}.
\newblock {\em Mathematics of Computation}, 49(179):91--103, 1987.

\bibitem{tadmor2003entropy}
Eitan Tadmor.
\newblock Entropy stability theory for difference approximations of nonlinear
  conservation laws and related time-dependent problems.
\newblock {\em Acta Numerica}, 12:451--512, 2003.

\bibitem{fisher2013high}
Travis~C Fisher and Mark~H Carpenter.
\newblock {High-order entropy stable finite difference schemes for nonlinear
  conservation laws: Finite domains}.
\newblock {\em Journal of Computational Physics}, 252:518--557, 2013.

\bibitem{carpenter2014entropy}
Mark~H Carpenter, Travis~C Fisher, Eric~J Nielsen, and Steven~H Frankel.
\newblock {Entropy Stable Spectral Collocation Schemes for the Navier--Stokes
  Equations: Discontinuous Interfaces}.
\newblock {\em SIAM Journal on Scientific Computing}, 36(5):B835--B867, 2014.

\bibitem{chen2017entropy}
Tianheng Chen and Chi-Wang Shu.
\newblock {Entropy stable high order discontinuous Galerkin methods with
  suitable quadrature rules for hyperbolic conservation laws}.
\newblock {\em Journal of Computational Physics}, 345:427--461, 2017.

\bibitem{crean2018entropy}
Jared Crean, Jason~E Hicken, David C Del~Rey Fern{\'a}ndez, David~W Zingg, and
  Mark~H Carpenter.
\newblock {Entropy-stable summation-by-parts discretization of the Euler
  equations on general curved elements}.
\newblock {\em Journal of Computational Physics}, 356:410--438, 2018.

\bibitem{chan2017discretely}
Jesse Chan.
\newblock {On discretely entropy conservative and entropy stable discontinuous
  Galerkin methods}.
\newblock {\em Journal of Computational Physics}, 362:346 -- 374, 2018.

\bibitem{chan2019skew}
Jesse Chan.
\newblock {Skew-Symmetric Entropy Stable Modal Discontinuous Galerkin
  Formulations}.
\newblock {\em Journal of Scientific Computing}, Aug 2019.

\bibitem{reiss2018shifted}
Julius Reiss, Philipp Schulze, J{\"o}rn Sesterhenn, and Volker Mehrmann.
\newblock The shifted proper orthogonal decomposition: A mode decomposition for
  multiple transport phenomena.
\newblock {\em SIAM Journal on Scientific Computing}, 40(3):A1322--A1344, 2018.

\bibitem{abgrall2018model}
R~Abgrall and R~Crisovan.
\newblock {Model reduction using $L^1$-norm minimization as an application to
  nonlinear hyperbolic problems}.
\newblock {\em International Journal for Numerical Methods in Fluids},
  87(12):628--651, 2018.

\bibitem{rim2018transport}
Donsub Rim, Scott Moe, and Randall~J LeVeque.
\newblock Transport reversal for model reduction of hyperbolic partial
  differential equations.
\newblock {\em SIAM/ASA Journal on Uncertainty Quantification}, 6(1):118--150,
  2018.

\bibitem{oleinik1957discontinuous}
Olga~Arsen'evna Oleinik.
\newblock Discontinuous solutions of non-linear differential equations.
\newblock {\em Uspekhi Matematicheskikh Nauk}, 12(3):3--73, 1957.

\bibitem{kruvzkov1970first}
Stanislav~N Kru{\v{z}}kov.
\newblock First order quasilinear equations in several independent variables.
\newblock {\em Mathematics of the USSR-Sbornik}, 10(2):217, 1970.

\bibitem{hughes1986new}
Thomas~JR Hughes, LP~Franca, and M~Mallet.
\newblock {A new finite element formulation for computational fluid dynamics:
  I. Symmetric forms of the compressible Euler and Navier-Stokes equations and
  the second law of thermodynamics}.
\newblock {\em Computer Methods in Applied Mechanics and Engineering},
  54(2):223--234, 1986.

\bibitem{mock1980systems}
Michael~S Mock.
\newblock Systems of conservation laws of mixed type.
\newblock {\em Journal of Differential equations}, 37(1):70--88, 1980.

\bibitem{harten1983symmetric}
Amiram Harten.
\newblock On the symmetric form of systems of conservation laws with entropy.
\newblock {\em Journal of computational physics}, 49(1):151--164, 1983.

\bibitem{guermond2016invariant}
Jean-Luc Guermond and Bojan Popov.
\newblock Invariant domains and first-order continuous finite element
  approximation for hyperbolic systems.
\newblock {\em SIAM Journal on Numerical Analysis}, 54(4):2466--2489, 2016.

\bibitem{guermond2019invariant}
Jean-Luc Guermond, Bojan Popov, and Ignacio Tomas.
\newblock Invariant domain preserving discretization-independent schemes and
  convex limiting for hyperbolic systems.
\newblock {\em Computer Methods in Applied Mechanics and Engineering},
  347:143--175, 2019.

\bibitem{chandrashekar2013kinetic}
Praveen Chandrashekar.
\newblock {Kinetic energy preserving and entropy stable finite volume schemes
  for compressible Euler and Navier-Stokes equations}.
\newblock {\em Communications in Computational Physics}, 14(5):1252--1286,
  2013.

\bibitem{ray2016entropy}
Deep Ray, Praveen Chandrashekar, Ulrik~S Fjordholm, and Siddhartha Mishra.
\newblock {Entropy stable scheme on two-dimensional unstructured grids for
  Euler equations}.
\newblock {\em Communications in Computational Physics}, 19(5):1111--1140,
  2016.

\bibitem{gassner2016split}
Gregor~J Gassner, Andrew~R Winters, and David~A Kopriva.
\newblock {Split form nodal discontinuous Galerkin schemes with
  summation-by-parts property for the compressible Euler equations}.
\newblock {\em Journal of Computational Physics}, 327:39--66, 2016.

\bibitem{tadmor2016entropy}
Eitan Tadmor.
\newblock Entropy stable schemes.
\newblock {\em Handbook of Numerical Analysis}, 17:467--493, 2016.

\bibitem{tadmor2006entropy}
Eitan Tadmor and Weigang Zhong.
\newblock {Entropy stable approximations of Navier--Stokes equations with no
  artificial numerical viscosity}.
\newblock {\em Journal of Hyperbolic Differential Equations}, 3(03):529--559,
  2006.

\bibitem{upperman2019entropy}
Johnathon Upperman and Nail~K Yamaleev.
\newblock {Entropy stable artificial dissipation based on Brenner
  regularization of the Navier-Stokes equations}.
\newblock {\em Journal of Computational Physics}, 393:74--91, 2019.

\bibitem{parsani2016entropy}
Matteo Parsani, Mark~H Carpenter, Travis~C Fisher, and Eric~J Nielsen.
\newblock {Entropy Stable Staggered Grid Discontinuous Spectral Collocation
  Methods of any Order for the Compressible Navier--Stokes Equations}.
\newblock {\em SIAM Journal on Scientific Computing}, 38(5):A3129--A3162, 2016.

\bibitem{ryckelynck2009hyper}
David Ryckelynck.
\newblock Hyper-reduction of mechanical models involving internal variables.
\newblock {\em International Journal for Numerical Methods in Engineering},
  77(1):75--89, 2009.

\bibitem{barrault2004empirical}
Maxime Barrault, Yvon Maday, Ngoc~Cuong Nguyen, and Anthony~T Patera.
\newblock An ‘empirical interpolation’ method: application to efficient
  reduced-basis discretization of partial differential equations.
\newblock {\em Comptes Rendus Mathematique}, 339(9):667--672, 2004.

\bibitem{bui2004aerodynamic}
Tan Bui-Thanh, Murali Damodaran, and Karen~E Willcox.
\newblock Aerodynamic data reconstruction and inverse design using proper
  orthogonal decomposition.
\newblock {\em AIAA journal}, 42(8):1505--1516, 2004.

\bibitem{chaturantabut2010nonlinear}
Saifon Chaturantabut and Danny~C Sorensen.
\newblock Nonlinear model reduction via discrete empirical interpolation.
\newblock {\em SIAM Journal on Scientific Computing}, 32(5):2737--2764, 2010.

\bibitem{drmac2016new}
Zlatko Drmac and Serkan Gugercin.
\newblock A new selection operator for the discrete empirical interpolation
  method---improved a priori error bound and extensions.
\newblock {\em SIAM Journal on Scientific Computing}, 38(2):A631--A648, 2016.

\bibitem{yano2019lp}
Masayuki Yano and Anthony~T Patera.
\newblock {An LP empirical quadrature procedure for reduced basis treatment of
  parametrized nonlinear PDEs}.
\newblock {\em Computer Methods in Applied Mechanics and Engineering},
  344:1104--1123, 2019.

\bibitem{yano2019discontinuous}
Masayuki Yano.
\newblock {Discontinuous Galerkin reduced basis empirical quadrature procedure
  for model reduction of parametrized nonlinear conservation laws}.
\newblock {\em Advances in Computational Mathematics}, pages 1--34, 2019.

\bibitem{everson1995karhunen}
Richard Everson and Lawrence Sirovich.
\newblock {Karhunen--Loeve procedure for gappy data}.
\newblock {\em JOSA A}, 12(8):1657--1664, 1995.

\bibitem{willcox2006unsteady}
Karen Willcox.
\newblock Unsteady flow sensing and estimation via the gappy proper orthogonal
  decomposition.
\newblock {\em Computers \& fluids}, 35(2):208--226, 2006.

\bibitem{astrid2008missing}
Patricia Astrid, Siep Weiland, Karen Willcox, and Ton Backx.
\newblock Missing point estimation in models described by proper orthogonal
  decomposition.
\newblock {\em IEEE Transactions on Automatic Control}, 53(10):2237--2251,
  2008.

\bibitem{carlberg2011efficient}
Kevin Carlberg, Charbel Bou-Mosleh, and Charbel Farhat.
\newblock {Efficient non-linear model reduction via a least-squares
  Petrov--Galerkin projection and compressive tensor approximations}.
\newblock {\em International Journal for Numerical Methods in Engineering},
  86(2):155--181, 2011.

\bibitem{ismail2009affordable}
Farzad Ismail and Philip~L Roe.
\newblock {Affordable, entropy-consistent Euler flux functions II: Entropy
  production at shocks}.
\newblock {\em Journal of Computational Physics}, 228(15):5410--5436, 2009.

\bibitem{zakerzadeh2017entropy}
Mohammad Zakerzadeh and Georg May.
\newblock {Entropy stable discontinuous Galerkin scheme for the compressible
  Navier-Stokes equations}.
\newblock In {\em 55th AIAA Aerospace Sciences Meeting}, page 0084, 2017.

\bibitem{parsani2015entropy}
Matteo Parsani, Mark~H Carpenter, and Eric~J Nielsen.
\newblock {Entropy stable wall boundary conditions for the three-dimensional
  compressible Navier--Stokes equations}.
\newblock {\em Journal of Computational Physics}, 292:88--113, 2015.

\bibitem{fernandez2014generalized}
David C Del~Rey Fern{\'a}ndez, Pieter~D Boom, and David~W Zingg.
\newblock {A generalized framework for nodal first derivative
  summation-by-parts operators}.
\newblock {\em Journal of Computational Physics}, 266:214--239, 2014.

\bibitem{fernandez2018simultaneous}
David C Del~Rey Fern{\'a}ndez, Jason~E Hicken, and David~W Zingg.
\newblock Simultaneous approximation terms for multi-dimensional
  summation-by-parts operators.
\newblock {\em Journal of Scientific Computing}, 75(1):83--110, 2018.

\bibitem{chan2018efficient}
Jesse Chan, David~C Del Rey~Fern\'{a}ndez, and Mark~H Carpenter.
\newblock {Efficient entropy stable Gauss collocation methods}.
\newblock {\em SIAM Journal on Scientific Computing}, 41(5):A2938--A2966, 2019.

\bibitem{chenreview}
Tianheng Chen and Chi-Wang Shu.
\newblock {Review of Entropy Stable Discontinuous Galerkin Methods for Systems
  of Conservation Laws on Unstructured Simplex Meshes}.
\newblock {\em CSIAM Transactions on Applied Mathematics}, 1(1):1--52, 2020.

\bibitem{svard2014entropy}
Magnus Sv{\"a}rd and Hatice {\"O}zcan.
\newblock {Entropy-stable schemes for the Euler equations with far-field and
  wall boundary conditions}.
\newblock {\em Journal of Scientific Computing}, 58(1):61--89, 2014.

\bibitem{lucia2004reduced}
David~J Lucia, Philip~S Beran, and Walter~A Silva.
\newblock Reduced-order modeling: new approaches for computational physics.
\newblock {\em Progress in aerospace sciences}, 40(1-2):51--117, 2004.

\bibitem{knezevic2011reduced}
David~J Knezevic, Ngoc-Cuong Nguyen, and Anthony~T Patera.
\newblock {Reduced basis approximation and a posteriori error estimation for
  the parametrized unsteady Boussinesq equations}.
\newblock {\em Mathematical Models and Methods in Applied Sciences},
  21(07):1415--1442, 2011.

\bibitem{munz1989numerical}
C-D Munz and L~Schmidt.
\newblock Numerical simulations of compressible hydrodynamic instabiltities
  with high resolution schemes.
\newblock In {\em {Nonlinear Hyperbolic Equations—Theory, Computation
  Methods, and Applications}}, pages 456--465. Springer, 1989.

\bibitem{lax1998solution}
Peter~D Lax and Xu-Dong Liu.
\newblock {Solution of two-dimensional Riemann problems of gas dynamics by
  positive schemes}.
\newblock {\em SIAM Journal on Scientific Computing}, 19(2):319--340, 1998.

\bibitem{kurganov2002solution}
Alexander Kurganov and Eitan Tadmor.
\newblock {Solution of two-dimensional Riemann problems for gas dynamics
  without Riemann problem solvers}.
\newblock {\em Numerical Methods for Partial Differential Equations},
  18(5):584--608, 2002.

\bibitem{marley2015reduced}
Christopher~D Marley, Karthikeyan Duraisamy, and James~F Driscoll.
\newblock Reduced order modeling of compressible flows with unsteady normal
  shock motion.
\newblock In {\em 51st AIAA/SAE/ASEE Joint Propulsion Conference}, page 3988,
  2015.

\bibitem{chan2020explicit}
Jesse Chan and Christina~G Taylor.
\newblock {Efficient computation of Jacobian matrices for entropy stable
  summation-by-parts schemes}.
\newblock {\em arXiv preprint arXiv:2006.07504}, 2020.

\bibitem{lucia2003reduced}
David~J Lucia, Paul~I King, and Philip~S Beran.
\newblock Reduced order modeling of a two-dimensional flow with moving shocks.
\newblock {\em Computers \& Fluids}, 32(7):917--938, 2003.

\end{thebibliography}

\end{document}